\mathchardef\dashmod="2D
\newcommand{ \inj}{ \hookrightarrow}
\newcommand{ \surj}{ \twoheadrightarrow}
\newcommand{\colim}{\mathrm{colim}}
\newcommand{\tor}{\mathrm{Tor}}
\newcommand{\lambdaq}{\underline{\lambda}}
\newcommand{\tmf}{\mathrm{tmf}}
\newcommand{\mmf}{\mathrm{mmf}}
\newcommand{\G}{{C_2}}
\newcommand{\tmfq}{\tmf_{\G}}
\newcommand{\tmfr}{\mathrm{mmf}_{\R}}
\newcommand{\tmfc}{\mathrm{mmf}_{\C}}
\newcommand{\ko}{\mathrm{ko}}
\newcommand{\hmf}{H\underline{\F}}
\newcommand{\hmz}{H\underline{\Z}}
\newcommand{\hfr}{H\F^{\R}}
\newcommand{\hfc}{H\F^{\C}}
\newcommand{\chfr}{H\F_{\R}}
\newcommand{\chfc}{H\F_{\C}}
\newcommand{\Aq}{\underline{\A}}
\newcommand{\Ar}{\A^{\R}}
\newcommand{\Eq}{\underline{\E}}
\newcommand{\cEr}{\E_{\R}}
\newcommand{\Ac}{\A^{\C}}
\newcommand{\cAr}{\A_{\R}}
\newcommand{\cAc}{\A_{\C}}
\newcommand{\pxi}{\overline{\xi}}
\newcommand{\pz}{\overline{\zeta}}
\newcommand{\sign}{\sigma}
\newcommand{\nor}{\kappa}
\newcommand{\F}{\mathbb{F}}
\newcommand{\N}{\mathbb{N}}
\renewcommand{\S}{\mathbb{S}}
\newcommand{\E}{\mathcal{E}}
\newcommand{\ab}{\mathcal{A}b}
\renewcommand{\sh}{\mathcal{SH}}
\newcommand{\mot}{\mathrm{Mot}}
\newcommand{\shc}{\sh_{\C}}
\newcommand{\shr}{\sh_{\R}}
\newcommand{\shg}{\sh_{C_2}}
\newcommand{\spec}{\mathrm{Spec}}
\newcommand{\cell}{\mathrm{Cell}}
\newcommand{\re}{\mathrm{Re_B}}
\newcommand{\sing}{\mathrm{Sing}}
\newcommand{\overre}{\overline{\re}}
\newcommand{\oversing}{\overline{\sing}}
\newcommand{\rs}{\sing(\S)}
\newcommand{\gm}{\mathbb{G}_m}
\newcommand{\M}{\mathbb{M}}
\newcommand{\A}{\mathcal{A}}
\renewcommand{\P}{\mathcal{P}}
\newcommand{\sur}{/ \hspace{-0.07cm}/}
\newcommand{\univ}{E{\G}}
\newcommand{\tuniv}{\widetilde{\univ}}
\newcommand{\ootimes}{\tilde{\otimes}}
\newcommand{\comp}{\Psi}
\newcommand{\Acomod}{{_{\A_*}}\mathbf{Comod}}
\newcommand{\Aqcomod}{{_{\Aq_*}}\mathbf{Comod}}
\theoremstyle{definition}
\newtheorem{de}{Definition}[section]
\newtheorem{nota}[de]{Notation}
\theoremstyle{plain}
\newtheorem{thm}[de]{Theorem}
\newtheorem{lemma}[de]{Lemma}
\newtheorem{pro}[de]{Proposition}
\newtheorem{cor}[de]{Corollary}
\newtheorem{warning}[de]{Warning}
\newtheorem*{thm*}{Theorem}
\newtheorem*{lemma*}{Lemma}
\newtheorem*{pro*}{Proposition}
\newtheorem*{cor*}{Corollary}
\theoremstyle{remark}
\newtheorem{rem}[de]{Remark}
\newtheorem{ex}[de]{Example}
\newtheorem{conj}[de]{Conjecture}
\title[$\mmf$]{Motivic modular forms from equivariant stable homotopy theory}
\author{Nicolas Ricka}
\address{Wayne State University, Department of Mathematics, 656 W. Kirby, Detroit, MI 48202}
\email{nicolas.ricka@wayne.edu}
\urladdr{https://sites.google.com/site/nicoricka/}
\begin{document}

\begin{abstract}
In this paper, we produce a cellular motivic spectrum of motivic modular forms over $\R$ and $\C$, answering positively to a conjecture of Dan Isaksen. This spectrum is constructed to have the appropriate cohomology, as a module over the relevant motivic Steenrod algebra.
We first produce a $\G$-equivariant version of this spectrum, and then use a machinery to construct a motivic spectrum from an equivariant one. We believe that this machinery will be of independent interest.
\end{abstract}

\maketitle

\section*{Introduction}

The $E_{\infty}$-ring spectrum  $\tmf$, constructed by Hopkins and Miller, is of great importance in today's stable homotopy theory. For instance, the Adams spectral sequence computing $\tmf_*$ sees more non-trivial elements from the homotopy of the sphere than the Adams spectral sequence for $\ko_*$. Indeed, if $\A$ denotes the modulo $2$ Steenrod algebra, and $\A(n)$ denotes its subalgebra generated by the first $(n+1)$ generators $Sq^{2^i}$ for $0 \geq i \geq n$, then $$H\F^*(\ko) \cong \A\sur\A(1)$$ whereas 
\begin{equation} \label{eq:cohtmf}  H\F^*(\tmf) \cong \A\sur\A(2). \end{equation} As a consequence of the Hopf invariant one question, there cannot possibly exist a spectrum $X$ whose cohomology is $\A\sur\A(n)$ for any $n \geq 3$. Thus, $\tmf$ plays a particular role as it is the last one possible. Its particular cohomolohy, together with the Adams spectral sequence implies that the spectrum $\tmf$ has a large Hurewicz image (for instance, larger than the Hurewicz image of $\ko$. The interested reader who needs motivations from a different perspective can read \cite{TMFbook}.

Another approach to understand the Adams spectral sequence is to compare it to its analogues in other stable categories than the classical stable homotopy category.
For example, comparing the classical Adams spectral sequence to its motivic analogue, in the $\mathbb{A}^1$-stable category of motives, Isaksen \cite{I14} is able to do some new computations of the classical stable stems.

Motivated by these two phenomena, we are interested in this paper by the existence of a motivic version of the spectrum of topological modular forms $\tmf$ in the $\mathbb{A}^1$-stable homotopy category over $\spec(\R)$ and $\spec(\C)$. The existence of such a spectrum was conjectured in \cite{Isa09}.

Let $\cAr$ be the motivic Steenrod algebra, and $\cAr(2)$ be its subalgebra generated by the motivic Steenrod squares $Sq^1, Sq^2, Sq^4$ (see \cite{Voe03b}). The main result of this paper is the construction of a motivic  spectrum $\tmfr$ over $\spec(\R)$ whose motivic cohomology is  $\cAr\sur\cAr(2)$. This also gives a model for motivic modular forms over $\spec(\C)$ by pullback (see subsection \ref{sub:tmfr}). \\

The main result of the paper is the following.

\begin{thm*}[Theorem \ref{thm:tmfr}, Corollary \ref{cor:cohommmfc}]
There exist motivic spectra $\tmfc$ and $\tmfc$ in the stable motivic category over $\spec(\C)$ and $\spec(\R)$ respectively, whose cohomology is
$$\chfc^{*}(\tmfc) \cong \cAc \sur \cAc(2)$$
and
$$\chfr^{*}(\tmfr) \cong \cAr \sur \cAr(2),$$
respectively.
\end{thm*}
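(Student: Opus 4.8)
The plan is to obtain $\tmfr$ as the image of a genuine $C_2$-spectrum under the comparison machinery of this paper, and then to produce $\tmfc$ by base change. First I would construct a $C_2$-equivariant spectrum $\tmfq$ whose $RO(C_2)$-graded Bredon cohomology $\hmf^{\star}(\tmfq)$ is isomorphic, as a module over the $C_2$-equivariant Steenrod algebra $\Aq$, to $\Aq\sur\Aq(2)$, where $\Aq(2)\subset\Aq$ is the subalgebra generated by the equivariant Steenrod squares $Sq^1$, $Sq^2$, $Sq^4$. Granting this, I would feed $\tmfq$ into the machinery, namely a suitably exact, lax symmetric monoidal functor $\Phi\colon\shg\to\shr$ which is compatible with cell structures, satisfies $\Phi(\hmf)\simeq\chfr$, and carries the equivariant Steenrod operations to the $\R$-motivic ones compatibly with the identifications $a_{\sigma}\leftrightarrow\rho$ and $u_{\sigma}\leftrightarrow\tau$ of the coefficient rings (all of which is established when $\Phi$ is constructed). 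Then $\tmfr:=\Phi(\tmfq)$ is a cellular motivic spectrum over $\spec(\R)$ with $\chfr^{\star}(\tmfr)\cong\cAr\sur\cAr(2)$ as a $\cAr$-module. For the complex case I would set $\tmfc:=p^{*}\tmfr$ along $p\colon\spec(\C)\to\spec(\R)$; since $p^{*}$ is symmetric monoidal, preserves cellular objects, sends $\chfr$ to $\chfc$ and intertwines $\cAr$ with $\cAc$, it only remains to check the algebraic identity $\M_2^{\C}\otimes_{\M_2^{\R}}(\cAr\sur\cAr(2))\cong\cAc\sur\cAc(2)$, which follows from the freeness of $\cAr$, hence of $\cAr\sur\cAr(2)$, over $\M_2^{\R}$ together with the reduction $\rho\mapsto 0$ of the motivic Adem relations presenting $\cAr(2)$.

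The construction of $\tmfq$ is the substantive step, and I would carry it out by equivariant obstruction theory, realizing the $\Aq$-module $M:=\Aq\sur\Aq(2)$ as the Bredon cohomology of a $C_2$-spectrum; working equivariantly rather than directly over the motivic Steenrod algebra keeps the homological algebra close to its classical shadow. Concretely, one builds $\tmfq$ by attaching $C_2$-cells to an equivariant spectrum that already realizes $M$ through a range, for instance the connective Real $K$-theory spectrum $k\R$ (whose cohomology $\Aq\sur\Aq(1)$ surjects onto $M$), so as to impose the remaining defining relations of $M$ and kill the spurious classes produced by the equivariant $\Aq(2)$-operations, arranging that $\hmf^{\star}$ of the resulting colimit is $M$. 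The existence and coherence of the required attaching maps is governed by $RO(C_2)$-graded groups $\ext^{s,\star}_{\Aq}(M,M)$ with $s\geq 2$, which must vanish in the bidegrees where the obstructions live. I would deduce this vanishing from the Hu--Kriz description of the dual equivariant Steenrod algebra $\Aq_{\star}$ together with a comparison with the classical situation: under geometric fixed points and the underlying functor these groups map to classical $\ext_{\A}$-groups for $\tmf$, which vanish because the classical spectrum $\tmf$ exists with $H\F^{*}(\tmf)\cong\A\sur\A(2)$. An alternative I would keep in reserve is to identify $\tmfq$ with a form of Real topological modular forms arising from complex conjugation on the moduli stack of elliptic curves and to compute its Bredon cohomology a posteriori, but the cellular construction is more self-contained and is what I would present.

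I expect the main obstacle to be exactly this vanishing of equivariant obstruction groups. The $RO(C_2)$-graded $\ext$ over $\Aq$ is strictly larger than its classical counterpart: it carries a negative cone and extra $a_{\sigma}$- and $u_{\sigma}$-divisible families that are invisible classically, so the vanishing cannot simply be imported from the existence of $\tmf$. One has to show that every attaching-map obstruction lands in the ``positive cone'', where the classical comparison applies, and that the negative cone contributes nothing in the relevant degrees; this is the part of the argument that requires genuine computation with the $RO(C_2)$-graded Steenrod algebra. A secondary technical point is to check that $\Phi$ transports the entire $\Aq$-module structure on $\hmf^{\star}(\tmfq)$ to the $\cAr$-module structure on $\chfr^{\star}(\tmfr)$, and not merely the underlying bigraded abelian group; this reduces to compatibility of $\Phi$ with the individual Steenrod squares, which is built into the construction of $\Phi$.
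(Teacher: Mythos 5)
Your overall architecture matches the paper's: build a $\G$-equivariant spectrum $\tmfq$ with $\hmf^{*}(\tmfq)\cong\Aq\sur\Aq(2)$, push it into $\shr$ by the comparison machinery (the paper's functor is $\mot(E)=(c^{*}E)^{\wedge}_{\sing(\S)}$, with the Steenrod-module identification coming from Theorem \ref{thm:hmote}, which needs $\hmf_{*}$-freeness and a degreewise finiteness hypothesis, both satisfied here), and then base change along $\spec(\C)\to\spec(\R)$, where the identification $\cAc\sur\cAc(2)$ follows from killing $\rho$ as in Lemma \ref{lemma:comparisonwithc}. Those outer layers of your argument are fine.

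The genuine gap is in the central step, the construction of $\tmfq$. You propose to realize the $\Aq$-module $\Aq\sur\Aq(2)$ by equivariant obstruction theory, attaching $\G$-cells starting from connective Real $K$-theory, with obstructions in $RO(\G)$-graded groups $\ext^{s,\star}_{\Aq}(M,M)$, $s\ge 2$. You yourself flag that these groups contain a negative cone (coming from the $\nor$-divisible summand $\nor D\M$ of $\hmf_{*}$) and extra $\rho$- and $\tau$-local families with no classical counterpart, so the vanishing cannot be imported from the existence of classical $\tmf$; but you then offer no argument that the obstructions land in the positive cone or that the negative cone vanishes in the relevant bidegrees. As it stands this is an unproved hypothesis on which the whole construction rests, and there is no evidence in the literature that such a Postnikov/cell-attachment realization over $\Aq$ goes through (even classically, $\tmf$ is not obtained by naively realizing $\A\sur\A(2)$; its existence is the hard Hopkins--Miller input). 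The paper sidesteps this entirely: it defines $\tmfq$ as the homotopy pullback of $F(\univ_{+},\tmf)\to\tuniv\wedge\ko((x^{8}))\hookleftarrow\tuniv\wedge\ko[x^{8}]$, using the identification $t(\tmf)^{\G}\simeq\ko((x^{8}))$ from \cite{BR17}, and then computes $\hmf^{*}(\tmfq)\cong\Aq\sur\Aq(2)$ by a five-lemma argument on the resulting cofiber sequence (Proposition \ref{pro:koxeight}, Lemmas \ref{lemma:eqtmf}--\ref{lemma:tmfqorientation}). So the only homological input needed is the explicit Tate computation for $\tmf$ and the map $\comp:\A_{*}\to\Aq_{*}$, rather than a vanishing theorem for $RO(\G)$-graded $\ext$ over $\Aq$. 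Your fallback suggestion (a Real form of $\tmf$ from complex conjugation on the moduli of elliptic curves) is closer in spirit to a viable alternative, but you do not carry out the Bredon cohomology computation there either, so it does not close the gap.
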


The construction is indirect, and  only relies on the existence of $\tmf$ as a ring spectrum, satisfying equation \eqref{eq:cohtmf}.
We decompose the construction into two main steps: first, we build a $\G$-equivariant version of $\tmf$ which we call  $\tmfq$, where $\G$ denotes the group with two elements. This spectrum $\tmfq$ is constructed from its Tate diagram. The background on generalized Tate cohomology we need is recalled in \ref{sub:tatediag}. The determination of the Tate spectrum of $\tmf$ is the main result of \cite{BR17}.

The second step is to build the motivic modular forms spectrum over $\spec(\R)$ from $\tmfq$. Let $\shr$ denote the $\mathbb{A}^1$-stable homotopy category over $\spec(\R)$. In \cite{HO14}, the authors consider an equivariant version of the Betti realization functors (the non-equivariant version was introduced by Morel and Voevodsky in \cite{MV01,Voe02}). This functor has a right adjoint denoted $\sing$, and a section $c^*$ (again, see \cite{HO14}, or the recollections in subsection \ref{sub:comparison}). For a motivic spectrum $X \in \shr$, we denote by $X^{\wedge}_{\sing(\S)}$ the $\sing(\S)$-nilpotent completion of $X$, where $\S$ is the sphere spectrum.

We show that, for any spectrum $E \in \shg$ with a nice enough cohomology (see the hypothesis of Theorem \ref{thm:hmote}), the motivic cohomology of $(c^*E)^{\wedge}_{\sing(\S)}$ is obtained from the $\G$-equivariant cohomology of $E$ via an extension of scalars.
As a consequence, we define $\tmfr:=(c^*\tmfq)^{\wedge}_{\sing(\S)}$.

Let $\sh$ be the classical stable homotopy category, $\shg$ be the $\G$-equivariant stable homotopy category and $\shc$ (resp. $\shr$) be the cellular $\mathbb{A}^1$-stable homotopy category over $\spec(\C)$ (resp. $\spec(\R)$). The localization we consider to obtain the cellular category is described in subsection \ref{sub:cellularization}. The situation is summarized in the following diagram.

\begin{equation} \label{eq:square}
\xymatrix{ \tmfr \quad \in \ar@{~>}@/_1pc/[d]_{pullback}& \shr \ar[r]^{\re} \ar[d]^{p^*} & \shg \ar[d]^{i^*} & \ni \quad \tmfq \ar@{~>}@/_2pc/[lll]_{(c^*(-))^{\wedge}_{\sing(\S)}} \\
\tmfc \quad \in & \shc \ar[r]^{\re} & \sh &  \ni \quad \tmf \ar@{~>}@/_1pc/[u]_{Tate} }
\end{equation}
where $\re : \shc \rightarrow \sh$ and $\re : \shr \rightarrow \shg$ are the Betti realization functors. \\

The first part is devoted to the construction of motivic spectra from equivariant ones. Our construction of $\tmfr$ from $\tmfq$ will follows from these considerations, but the work done in this section is intended to be more general than this, and would apply to any spectrum whose homology is free over the homology of a point, with a smallness condition on the generators (see the hypothesis of Theorem \ref{thm:hmote}). In Section \ref{sec:prelim}, we recall the material we need about the equivariant and motivic stable homotopy categories, and fix some notations. In Section \ref{sec:descent}, we show that there is a way to describe the $\G$-equivariant stable homotopy in a way that is completely internal to the category $\shr$. This is the subject of the following theorem. Recall that $\re$ denotes the Betti realization functor. We denote also by $\sing$ its right adjoint.

\begin{thm*}[{Theorem \ref{thm:qshinmot}}]
The adjunction
$$ \re : \shr \leftrightarrows \shg : \sing$$
factors as

$$ \xymatrix{ \shr \ar@<.05cm>[rr]^{\re} \ar@<-.05cm>[dr]_{\rs \wedge (-)} & & \shg \ar@<.05cm>[ll]^{\sing} \ar@<.05cm>[dl]^{\oversing}\\
& \rs \dashmod mod(\shr) \ar@<.05cm>[ru]^{\overre} \ar@<-.05cm>[lu]_{U} & }$$

where the adjunct pair
$$ \overre : \rs\dashmod mod(\shr) \leftrightarrows : \shg :\oversing$$
is a Quillen equivalence .
\end{thm*}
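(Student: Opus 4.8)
The plan is to recognise $\oversing$ as the functor exhibiting $\shg$ as the category of modules over the commutative monoid $\rs = \sing(\S)$ in $\shr$, in the spirit of a Schwede--Shipley / Barr--Beck recognition theorem. The two substantial inputs will be a projection formula for $\re$ and the conservativity of $\sing$; both follow from the properties of the equivariant Betti realization recalled earlier.

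\emph{Set-up and the factorization.} Since $\re$ is strong symmetric monoidal, $\sing$ is lax symmetric monoidal, so $\rs = \sing(\S)$ is a commutative monoid in $\shr$ and $\rs \dashmod mod(\shr)$ carries its standard (projective) model structure, for which the free--forgetful pair $\rs \wedge (-) \dashv U$ is a Quillen adjunction and $U$ creates weak equivalences and fibrations. Set $\oversing(Y) := \sing(Y)$ with the $\rs$-module structure $\rs \wedge \sing(Y) = \sing(\S)\wedge\sing(Y) \to \sing(\S\wedge Y) = \sing(Y)$ coming from the lax structure. As $U\circ\oversing = \sing$ is right Quillen and $U$ detects fibrations and weak equivalences, $\oversing$ is right Quillen; write $\overre$ for its left adjoint. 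On free modules one has $\overre(\rs\wedge X) \simeq \re(X)$, and concretely $\overre(M)\simeq \S\wedge_{\re(\rs)}\re(M)$, base change along the counit $\re(\rs) = \re\sing(\S)\xrightarrow{\epsilon}\S$, which is a map of commutative monoids. The identities $\sing = U\circ\oversing$ and $\re = \overre\circ(\rs\wedge(-))$ are then immediate, so the displayed triangle commutes, and it remains to prove that $(\overre,\oversing)$ is a Quillen equivalence.

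\emph{Projection formula.} The key point is that the canonical $\rs$-linear map $\rs\wedge X \to \sing(\re X)$, adjoint to $\re(\sing(\S)\wedge X)\cong \re\sing(\S)\wedge\re X\xrightarrow{\epsilon\wedge 1}\re X$, is an equivalence for every $X\in\shr$. Both $X\mapsto\rs\wedge X$ and $X\mapsto\sing(\re X)$ are exact (the latter since $\re$ and the derived functor of $\sing$ are exact functors of stable categories) and preserve filtered homotopy colimits: $\re$ preserves all colimits, and $\re(S^{p,q})$ is invertible, hence dualizable, hence compact in $\shg$ (the unit of $\shg$ being compact), so $\re$ preserves compact objects and therefore $\sing$ preserves filtered colimits. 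Since $\shr$ is cellular, hence generated under homotopy colimits and desuspensions by the bigraded spheres $S^{p,q}$, it suffices to treat $X = S^{p,q}$. There $\sing$ commutes with suspension by the invertible object $\re(S^{p,q})$ --- by uniqueness of adjoints, $\Sigma^{p,q}\circ\sing$ and $\sing\circ\Sigma^{\re(S^{p,q})}$ are both right adjoint to $\re\circ\Sigma^{-p,-q} = \Sigma^{-\re(S^{p,q})}\circ\re$ --- so $\sing(\re S^{p,q}) \simeq \Sigma^{p,q}\sing(\S) = \rs\wedge S^{p,q}$, compatibly with the map in question, which is therefore an equivalence.

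\emph{Checking the Quillen equivalence.} On a cofibrant $\rs$-module $M$ the derived unit is $M\to\oversing\overre M = \sing(\overre M)$, and on free modules $M = \rs\wedge X$ it is precisely the projection-formula equivalence $\rs\wedge X\xrightarrow{\ \sim\ }\sing(\re X)$. Every cofibrant $\rs$-module is a retract of a cell module, i.e.\ a transfinite composite of pushouts along maps $\rs\wedge j$ with $j$ a generating cofibration of $\shr$; since $\overre$ is left Quillen and $\sing$ preserves cofiber sequences and filtered homotopy colimits (by the previous paragraph), while the module structure is detected by $U$, both $\mathrm{id}$ and $\oversing\overre$ preserve these homotopy colimits, and the derived unit is an equivalence on all cofibrant $M$. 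Hence $\mathrm{Ho}(\overre)$ is fully faithful. For essential surjectivity it suffices that $\oversing$ --- equivalently $\sing$, since $U$ is conservative --- reflects equivalences: the counit $\overre\oversing Y\to Y$ is then an equivalence after applying $\sing$, hence an equivalence, for every $Y$. Now $\re(S^{p,q})\simeq S^{(p-q)+q\sign}$, so these objects run over all representation spheres $S^{a+b\sign}$ with $a,b\in\mathbb{Z}$, and the $S^{a+b\sign}$ compactly generate $\shg$; therefore $\sing(Y)\simeq 0$ forces $\pi^{\G}_{a+b\sign}(Y) = [\re S^{p,q},Y] = [S^{p,q},\sing Y] = 0$ for all $a,b$, whence $Y\simeq 0$. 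This gives conservativity of $\sing$, and with it the theorem. I expect the genuinely non-formal ingredients to be just the projection formula --- the reduction to spheres via cellularity and the computation of $\sing$ on a representation sphere --- and the conservativity of $\sing$, which rests on $\re$ hitting a family of compact generators of $\shg$; the cellularity hypothesis on $\shr$ enters essentially in both places, and everything else is routine manipulation with the standard model structure on modules over a commutative monoid.
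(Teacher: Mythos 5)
Your argument is correct, and it isolates the same three essential inputs as the paper -- the projection formula, conservativity of $\sing$, and preservation of filtered homotopy colimits -- but it assembles them differently. The paper verifies exactly these three statements (Lemmas \ref{lemma:holim}, \ref{lemma:conservative}, \ref{lemma:projection}) and then invokes \cite[Proposition 5.29]{MNN15} as a black box; you instead unpack that monadicity statement by hand, proving full faithfulness of $\overre$ by a cell-induction on $\rs$-modules starting from the unit on free modules (for which you only need the projection formula in the special case $E=\S$, a mild economy over the paper's two-variable Lemma \ref{lemma:projection}), and then deducing that the counit is an equivalence from conservativity. Your conservativity argument is also genuinely different: the paper gets it in one line from the identity $c_*\sing \cong \mathrm{id}_{\shg}$ of Proposition \ref{pro:secresing}, whereas you argue that $\re$ hits all representation spheres $S^{a+b\sign}$, which compactly generate $\shg$, so $\sing(Y)\simeq 0$ forces all $RO(\G)$-graded homotopy groups of $Y$ to vanish; similarly, your proof that $\sing$ preserves filtered colimits runs through compactness of the dualizable objects $\re(S^{p,q})$, while the paper tests directly on bigraded homotopy groups via the adjunction isomorphism \eqref{eq:pistaradj}. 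What each approach buys: the paper's proof is shorter and delegates the delicate model-categorical bookkeeping to \cite{MNN15}, while yours is self-contained, makes explicit exactly where cellularity of $\shr$ enters (reduction to spheres for the projection formula, and detection of equivalences on bigraded homotopy), and does not need the section $c^*$ at all -- though the paper's one-line conservativity via $c_*\sing\cong\mathrm{id}$ is admittedly slicker.
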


Note that the corresponding non-equivariant statement is a direct consequence of the computations of Dan Isaksen \cite{I14} (see Remark \ref{rk:classicalrewriting} about this).

In particular, this exhibits the $\G$-equivariant stable homotopy category as a category of modules over some ring spectrum $\sing(\S) \in \shr$. Thus, given a $\G$-equivariant spectrum $E$, we can ask whether it is induced from a motivic spectrum, \textit{i.e.} if it is in the image of the functor $\sing(\S) \wedge(-)$.
We build in Definition \ref{de:mot} a functor that assigns to a $\G$-equivariant spectrum $E$ its closest motivic spectrum over $\spec(\R)$.  

We then turn to the study of the motivic homology of the spectra $(c^*(E))^{\wedge}_{\sing(\S)}$ in terms of equivariant homology. Denote by $\hmf_*$ the equivariant homology functor, and $\hfr_*$ the motivic homology functor. On both sides, there is a dual Steenrod algebra of cooperations, and homology is a functor that takes values in the category of comodules over this coalgebra (in both settings). The determination of the $\G$-equivariant version of the Steenrod algebra is due to Hu-Kriz \cite{HK01}, and the motivic version of the Steenrod algebra has been determined by Voevodsky \cite{Voe03b}.

\begin{thm*}[{Theorem \ref{thm:cohmote} and Theorem \ref{thm:hmote}}]
Let $E \in \shg$ be a $\G$-spectrum whose equivariant homology is free as a module over the homology of a point. Suppose moreover that the smallness condition of Theorem \ref{thm:cohmote} holds. Then, there is a natural isomorphism of comodules over the dual motivic Steenrod algebra
$$\hfr_{*}(\mot(E)) \cong \hfr_{*} \otimes_{\hmf_{*}} \hmf_{*}(E).$$
\end{thm*}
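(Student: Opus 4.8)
The plan is to compute the motivic homology of $\mot(E)$ by combining the ``internal'' description of $\shg$ as modules over $\rs$ in $\shr$ (Theorem \ref{thm:qshinmot}) with a base-change argument. First I would unwind the definition of $\mot(E)$: by Theorem \ref{thm:qshinmot}, the Quillen equivalence $\overre \dashv \oversing$ identifies $\shg$ with $\rs\dashmod mod(\shr)$, and $\mot(E)$ (Definition \ref{de:mot}) is obtained by taking the underlying motivic spectrum $U(\oversing(E))$, or equivalently the $\rs$-completed version $(c^*E)^{\wedge}_{\sing(\S)}$ sitting over $\spec(\R)$. The key point is that $\hfr$, viewed as an object of $\shr$, is itself an $\rs$-module (since $\rs = \sing(\S)$ is the unit-type ring spectrum detecting the equivariant structure, and $\hfr$ factors through $\shg$ via realization), so that smashing with $\hfr$ can be computed either in $\shr$ or after passing to $\rs$-modules.

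The main computational input is a change-of-rings / flat base change isomorphism. I would argue as follows. The equivariant homology $\hmf_* E$ is free over $\hmf_* = \hmf_*(\mathrm{pt})$ by hypothesis; choose a basis realizing $E$, up to the relevant completion, as a wedge of (equivariant) spheres indexed by that basis — this is exactly where the smallness condition of Theorem \ref{thm:cohmote} is used, to ensure the generators are ``small'' enough that the wedge behaves correctly under $c^*$, under $\sing(\S)$-nilpotent completion, and under smashing with $\hfr$ (i.e. that these operations commute with the relevant possibly-infinite coproducts and with the completion). Having reduced to spheres, the statement becomes the assertion that $\hfr_*(\mot(\S^V)) \cong \hfr_* \otimes_{\hmf_*} \hmf_*(\S^V)$ for representation spheres $\S^V$, which is a bookkeeping computation comparing the bigraded motivic homology of a point over $\R$ with the $RO(\G)$-graded equivariant homology of a point, under the dictionary relating motivic weight to the ``twist'' and motivic degree to topological degree (the same dictionary that appears in Isaksen's work and in the comparison of subsection \ref{sub:comparison}). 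Then I would reassemble the general case by taking coproducts over the chosen basis, using exactness/continuity of $\hfr_*$ on the relevant subcategory.

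Finally I would upgrade the isomorphism of bigraded abelian groups to an isomorphism of comodules over the dual motivic Steenrod algebra. For this I would use naturality: the realization functor and the functor $\mot$ are compatible with the Steenrod operations (Hu--Kriz \cite{HK01} on the equivariant side, Voevodsky \cite{Voe03b} on the motivic side), so the map $\hfr_* \otimes_{\hmf_*} \hmf_*(E) \to \hfr_*(\mot(E))$ is built from maps of (co)algebras; one checks the coaction on the source — the tensor-up coaction using the map of dual Steenrod algebras $\hmf_* \A^{\G}_* \to \hfr_* \cAr_*$ — matches the coaction on the target. Concretely, on a basis element this is the statement that the Steenrod coaction on $\mot(\S^V)$ is the image of the coaction on $\S^V$, which again follows from the sphere case and naturality.

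The hard part will be the smallness bookkeeping in the middle step: precisely controlling which (possibly infinite) wedges of equivariant spheres can be pushed through $c^*$, through $\rs$-nilpotent completion, and through $\hfr_* \wedge (-)$ without losing exactness, and checking that the bigrading/weight dictionary is the right one so that the free $\hmf_*$-module structure transports to a free $\hfr_*$-module structure on the nose. Once the sphere case and the continuity statement are in hand, the comodule structure and naturality are essentially formal.
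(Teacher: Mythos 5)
There is a genuine gap at the heart of your reduction. You propose to use freeness of $\hmf_{*}(E)$ over $\hmf_{*}$ to ``choose a basis realizing $E$, up to the relevant completion, as a wedge of (equivariant) spheres''; this is false. Freeness of the homology only splits $\hmf \wedge E$ as an $\hmf$-module, not $E$ itself: the motivating example $\tmfq$, whose cohomology $\Aq\sur\Aq(2)$ is free over $\hmf_{*}$, is nowhere near a wedge of representation spheres, before or after any completion. The degreewise-finiteness (``smallness'') hypothesis does not rescue this -- in the paper it is used for convergence, not for any splitting. Moreover, even granting a splitting after smashing with $\hmf$, your plan never engages with the actual difficulty: $\mot(E)$ is a totalization (a homotopy limit), and $\hfr \wedge (-)$ does not commute with $\mathrm{Tot}$, so ``continuity of $\hfr_{*}$'' over a basis is not available. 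Relatedly, your identification of $\mot(E)$ with $U(\oversing(E))=\sing(E)$ ``or equivalently'' the $\sing(\S)$-nilpotent completion conflates two different spectra: $\hfr_{*}(\sing E)\cong \hmf_{*}(E)$ already (projection formula plus $\re\hfr\simeq\hmf$), and the entire content of the theorem is that the completion cuts this down to $\hfr_{*}\otimes_{\hmf_{*}}\hmf_{*}(E)$. Even the ``base case'' $E=\S$ is not grading bookkeeping: $\hmf_{*}=\M\oplus\nor D\M$ is strictly larger than $\hfr_{*}=\M$, and killing the negative cone $\nor D\M$ is exactly what the completion does.

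For comparison, the paper's argument runs the Bousfield--Kan spectral sequence of the cosimplicial spectrum $(\sing\re)^{\bullet}\sing E$: the stages are identified as $\rs^{\wedge n}\wedge \sing E$ via Theorem \ref{thm:qshinmot}, their homology is computed by a K\"unneth spectral sequence that collapses because $\tor_i^{\M}(D\M,D\M)=0$ (a real point, since $\hmf_{*}$ is not flat over $\hfr_{*}$), the resulting $E_1$-page is a cobar-type complex that is exact away from degree zero and splits off $\hmf_{*}(E)$ by freeness, and complete convergence is checked by a Mittag--Leffler condition -- this is precisely where the finiteness hypothesis enters. The comodule statement is then obtained not by naturality over a basis of spheres, but by applying $\re$ to the whole cosimplicial object, using that $\re\mot^{\bullet}(E)$ is contractible and that $\re$ on dual Steenrod algebras is the extension of scalars $\hmf_{*}\otimes_{\hfr_{*}}(-)$. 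Your proposal would need to be rebuilt around this (or an equivalent) analysis of the completion; as written, the sphere-splitting step on which everything rests does not exist.
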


This reduces the construction of motivic modular forms over $\spec(\R)$ to the construction of the $\G$-equivariant version of topological modular forms. \\

In the second part of the paper, we build $\tmfq$. This is a more technical part and relies heavealy on the machinery developped in \cite{Mah81,DM84,DM86} as well as explicit computations. The key step is to compute the Tate spectrum of the classical spectrum $\tmf$.

\begin{thm*}[{\cite[Theorem 1.1]{BR17}}]
There is a weak equivalence of spectra
$$t(\tmf) \cong \prod_{i \in \Z} \Sigma^{8i} \ko =: \ko((x^8))$$
where $x^8$ is in degree $8$.
\end{thm*}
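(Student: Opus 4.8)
This is a classical $\G$-Tate computation, and the plan is to pin down $t(\tmf)$ through its mod $2$ cohomology as a module over the Steenrod algebra and then recognize the answer. Since $t(-)$ is lax symmetric monoidal, $t(\tmf)$ is an algebra over $t(\S)=\S^{t\G}$, which by Lin's theorem is the $2$-complete sphere; hence $t(\tmf)$ is automatically $2$-complete, and $\ko((x^8))$ — like every spectrum appearing below — is to be read $2$-adically (so there is no odd-primary content). Give $\tmf$ the trivial $\G$-action, so that $t(\tmf)=\tmf^{t\G}$ is the bottom corner of the Tate square and sits in the norm cofibre sequence
$$ \univ_+\wedge\tmf \longrightarrow F(\univ_+,\tmf) \longrightarrow t(\tmf), $$
with $\univ_+\wedge\tmf\simeq\tmf\wedge B\G_+$ and $F(\univ_+,\tmf)\simeq\tmf^{B\G}\simeq\lim_n\tmf^{\mathbb{RP}^n}$.

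\textbf{Step 1: the mod $2$ cohomology of $t(\tmf)$.} Using $H\F^*(\tmf)\cong\A\sur\A(2)$ from \eqref{eq:cohtmf} — a module of finite type, bounded below — the $H\F$-(co)homology of all three terms of the norm sequence is accessible: the homotopy orbit term involves $H\F^*(\tmf)$ together with $H\F^*(B\G_+)$, the homotopy fixed point term is controlled by a conditionally convergent spectral sequence built from $H^*(\G;H\F^*(\tmf))$, and the two assemble. Carrying this out — the sort of bookkeeping done with the Singer construction and the Davis--Mahowald machinery of \cite{Mah81,DM84,DM86} — yields an isomorphism of $\A$-modules identifying $H\F^*(t(\tmf))$ with the completed Singer construction $R_+\!\left(\A\sur\A(2)\right)$ (an honestly infinite product object, as befits a Tate-type cohomology).

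\textbf{Step 2: identifying the Singer construction.} The computational heart is the isomorphism of completed $\A$-modules
$$ R_+\!\left(\A\sur\A(2)\right)\ \cong\ \prod_{i\in\Z}\Sigma^{8i}\!\left(\A\sur\A(1)\right)\ =\ H\F^*\!\left(\textstyle\prod_{i\in\Z}\Sigma^{8i}\ko\right). $$
Via the change-of-rings $\A\sur\A(2)=\A\otimes_{\A(2)}\F_2$ and the compatibility of $R_+$ with extension of scalars along $\A(2)\subset\A$, this reduces to a computation internal to $\A(2)$: running the algebraic Tate spectral sequence for $\A(2)$ acting on the Singer construction of $\F_2$, whose blueshift produces exactly the $v_1^4$-periodicity of $\ext_{\A(1)}(\F_2,\F_2)$ — the Bott periodicity of $\ko$ — and hence an $8$-periodic family of copies of $\A\sur\A(1)$. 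I expect this explicit Steenrod-operation bookkeeping, keeping track of the $\A$-action across the periodicity, to be the main obstacle.

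\textbf{Step 3: from cohomology to a weak equivalence.} Finally one realizes the isomorphism of Step 2 by a map of spectra. Both sides are $H\F$-complete — the product because each $\Sigma^{8i}\ko$ is, and $t(\tmf)$ because it is a $2$-complete Tate construction — so an $H\F_*$-isomorphism will do. The ring map $\tmf\to\tmf^{h\G}\to t(\tmf)$ locates the periodicity class $x^8\in\pi_8 t(\tmf)$ (the image of $c_4\in\pi_8\tmf$), and the collapsing $H\F$-Adams spectral sequence $E_2=\prod_i\Sigma^{8i}\ext_{\A(1)}(\F_2,\F_2)\Rightarrow\pi_* t(\tmf)$ coming from Steps 1--2 shows $x^8$ is invertible and computes $\pi_* t(\tmf)$. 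An obstruction-theoretic argument — the obstruction groups being controlled by the $\A\sur\A(1)$-shaped cohomology, as in \cite{Mah81,DM84,DM86} — produces a $\ko$-module structure on $t(\tmf)$ (equivalently one may smash with the finite complex $A_1$ with $H\F^*(A_1)\cong\A(2)\sur\A(1)$, using $t(\tmf)\wedge A_1\simeq t(\tmf\wedge A_1)$), whence a $\ko$-linear map $\prod_{i\in\Z}\Sigma^{8i}\ko\to t(\tmf)$ sending the $i$-th unit to $x^{8i}$; by Step 2 it is an $H\F_*$-isomorphism, hence an equivalence. The only convergence point needing attention is the finiteness of each bidegree of the Adams spectral sequences, so that the infinite products converge; this is standard in the setting of \cite{Mah81,DM84,DM86}.
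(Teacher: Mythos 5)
This statement is not proved in the paper at all: it is the main theorem of \cite{BR17}, quoted verbatim (once in the introduction and again in Section 2.2) and used as a black-box input to Definition \ref{de:tmfq}. So there is no internal argument to compare yours against; what you have written is an attempt to reprove the cited external result, and it can only be judged on its own merits. As a strategy outline it is sensible and consistent with how results of this type (Davis--Mahowald for $\ko$ and $H\Z$, and the actual \cite{BR17}) are approached: analyse the norm cofibre sequence, identify $H\F^*(t(\tmf)^{\G})$ with a completed Singer-type construction on $\A\sur\A(2)$, exhibit the blueshift to $\A\sur\A(1)$, and then realize the splitting.

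As a proof, however, it has genuine gaps, and they sit exactly at the two hard points. First, Step 2 --- the isomorphism of completed $\A$-modules between $R_+(\A\sur\A(2))$ and $\prod_i\Sigma^{8i}\A\sur\A(1)$ --- is the computational core of the theorem and is only asserted ("I expect this\dots to be the main obstacle"); nothing in your sketch actually produces it. Second, Step 3 conflates a cohomology isomorphism with an equivalence of spectra. To get a map $\prod_i\Sigma^{8i}\ko\to t(\tmf)^{\G}$ you need genuine maps $\Sigma^{8i}\ko\to t(\tmf)^{\G}$, i.e.\ (at least) a $\ko$-module structure on the target together with the statement that the classes $x^{8i}$ are $\ko$-module generators; "an obstruction-theoretic argument produces a $\ko$-module structure" is precisely the step that needs an argument, and the parenthetical escape route is not available as described: the standard finite complex $A_1$ satisfies $H\F^*(A_1)\cong\A(1)$ (free of rank one over $\A(1)$, top class in degree $6$), not $\A(2)\sur\A(1)$, and no finite complex realizing $\A(2)\sur\A(1)$ is known to cite, so smashing with it cannot be invoked. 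Relatedly, the identification of the periodicity class $x^8$ with the Tate image of $c_4$, and the claim that it becomes invertible, are themselves conclusions of the computation rather than inputs. If you want an honest proof you should either carry out these steps (which essentially amounts to reproducing \cite{BR17}) or, as the paper does, cite that reference.
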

The knowledge of the Tate spectrum of $\tmf$ enables us to show

\begin{thm*}[{Theorem \ref{thm:cohtmfq}}]
There is an isomorphism of $\Aq$-modules
$$\hmf^{*}(\tmfq) \cong \Aq\sur\Aq(2).$$ 
\end{thm*}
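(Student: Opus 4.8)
The plan is to build $\tmfq$ directly from its Tate diagram, the standard homotopy pullback square
\[
\xymatrix{
\tmfq \ar[r] \ar[d] & \tuniv \wedge \tmfq \ar[d] \\
F(\univ_+, \tmfq) \ar[r] & \tuniv \wedge F(\univ_+, \tmfq),
}
\]
which (by the recollections in subsection \ref{sub:tatediag}) determines a $\G$-equivariant spectrum once one knows the three corners built from the non-equivariant data: the Borel-completed spectrum (the geometric fixed point information), the homotopy-fixed-point piece $F(\univ_+,\tmfq)$ whose underlying spectrum is $\tmf$ with the trivial $\G$-action, and the Tate piece $\tuniv \wedge F(\univ_+,\tmfq)$, whose underlying non-equivariant spectrum is $t(\tmf)$. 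The key external input is the Bruner--Rognes computation $t(\tmf) \simeq \ko((x^8))$ quoted above; combined with $\tmf$ itself, this assembles into a genuine $\G$-spectrum $\tmfq$. The first thing I would do is make this construction precise, being careful that the cofiber sequence $\univ_+ \to S^0 \to \tuniv$ relating the geometric and Borel pieces is respected, so that $\tmfq$ is well-defined up to the ambiguity inherent in a homotopy pullback.

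Once $\tmfq$ is in hand, the computation of $\hmf^*(\tmfq)$ proceeds by applying $\hmf^*(-)$ to the Tate square, which yields a (Mayer--Vietoris type) long exact sequence, or better, a short exact sequence of $\Aq$-modules once one checks the relevant connecting maps vanish. Here I would use that $\hmf^*$ of the Borel pieces is computed by the Tate/homotopy-fixed-point spectral sequence, and that the non-equivariant inputs are all \emph{known}: $H\F^*(\tmf) = \A\sur\A(2)$ by \eqref{eq:cohtmf}, $H\F^*(\ko) = \A\sur\A(1)$, and hence $H\F^*(t(\tmf)) = H\F^*(\ko((x^8)))$ is a sum of shifted copies of $\A\sur\A(1)$. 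The structure of $\hmf^*$ of a point as a module over $\Aq_*$ is known from Hu--Kriz \cite{HK01}, and the geometric-fixed-point functor $\Phi^{\G}$ relates $\hmf$ to $H\F$; tracking the $\Aq$-module structure through the Tate square and matching it against the known presentation of $\Aq\sur\Aq(2)$ (which is, abstractly, $\Aq_*$ tensored up from the corresponding $\A_*$-level statement, suitably decorated by the $RO(\G)$-grading) should produce the desired isomorphism.

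The main obstacle I expect is the \emph{comparison of module structures across the Tate square}, not the additive computation. Additively, everything reduces to known cohomology rings and the Tate spectral sequence (which for $\ko$-based inputs is well-understood and collapses appropriately); but one must verify that the $\Aq$-action assembled from the pieces is exactly the one on $\Aq\sur\Aq(2)$, i.e.\ that there are no exotic extensions in the $\Aq$-module structure coming from the gluing, and that the $\Aq(2)$-freeness (equivalently, the vanishing of the relevant $\ext$ or the exactness needed to split the Mayer--Vietoris sequence) genuinely holds. Concretely, I would: (i) identify $\hmf^*$ of each corner of the square as an explicit $\Aq$-module using the known non-equivariant cohomologies and the structure of $\hmf^* (\mathrm{pt})$; (ii) show the Mayer--Vietoris long exact sequence in $\hmf^*$ degenerates into a short exact sequence of $\Aq$-modules; (iii) recognize the resulting extension as $\Aq\sur\Aq(2)$ by comparing generators and the Milnor-basis action of the $\pxi_i$ (or the $Q_i$ and $Sq^{2^i}$), using that $\tmf$-side everything is controlled by $\A\sur\A(2)$. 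Step (iii) is where the real work lies, since it requires pinning down the action of the equivariant Steenrod operations on classes detected only in the Tate part.
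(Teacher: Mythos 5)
There is a genuine gap, and it sits at the very start: your pullback square is circular ($\tmfq$ occupies its own top-right corner), and the inputs you actually list --- $\tmf$ with trivial action for the homotopy-fixed-point piece and $t(\tmf)\simeq \ko((x^8))$ for the Tate piece --- do not determine a new genuine $\G$-spectrum. Gluing $F(\univ_+,\tmf)$ against its own Tate construction just returns the Borel completion $F(\univ_+,\tmf)$, whose $\hmf$-cohomology is not $\Aq\sur\Aq(2)$ (it is not even free over the cohomology of a point). The essential creative input, which your proposal never supplies, is the choice of what to put at the geometric corner: the paper defines $\tmfq$ as the homotopy pullback of
$$F(\univ_+,\tmf)\longrightarrow \tuniv\wedge\ko((x^8)) \longleftarrow \tuniv\wedge\ko[x^8],$$
i.e.\ it declares the geometric fixed points to be the polynomial part $\ko[x^8]$ of the Tate spectrum, included into the Laurent part, by analogy with the Tate squares of $\hmz$ and $\hmf$ (whose geometric corners are $H\F[x^2]$ and $H\F[x]$). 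Without this choice the theorem is not about a well-defined object, and with the naive choice (the full Tate piece) it is false.

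You also correctly flag that the real difficulty is the $\Aq$-module (equivalently $\Aq_*$-comodule) structure, but your step (iii) names the problem without a mechanism. The paper's mechanism has two parts. First, for spectra of the form $\tuniv\wedge X$ and $\univ_+\wedge X$ with $X$ non-equivariant, the coaction is computed by an explicit algebra map $\comp:\A_*\rightarrow\Aq_*$ (the twisted extension of scalars of the appendix, Theorem \ref{thm:comp}); its $\rho$-inverted form converts the non-equivariant input for $\ko[x^8]$ into $\bigl(\Aq\sur\Aq(2)\bigr)_*[\rho^{-1}]$ (Proposition \ref{pro:koxeight}), and the Borel piece, via $H\F^*(\tmf)\cong\A\sur\A(2)$, into $\hmf_*(\univ_+)\otimes_{\hmf_*}\bigl(\Aq\sur\Aq(2)\bigr)_*$ (Lemmas \ref{lemma:eqtmf} and \ref{lemma:eqx}). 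Second, instead of ruling out exotic extensions in a Mayer--Vietoris sequence by hand, the paper maps the entire Tate diagram of $\tmfq$ to that of $\hmf$ (Lemma \ref{lemma:tmfqorientation}), which produces a map of isotropy-separation long exact sequences; the 5-lemma then identifies $\hmf_*(\tmfq)$ with $\bigl(\Aq\sur\Aq(2)\bigr)_*$ compatibly with the coaction. These two ingredients --- the choice of $\ko[x^8]$ at the geometric corner and the comparison map to the Tate diagram of $\hmf$ together with the formula for $\comp$ --- are what your outline is missing.
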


Then, applying Theorem \ref{thm:cohmote} and Theorem \ref{thm:hmote} to Theorem \ref{thm:cohtmfq} gives the spectrum of motivic modular forms over $\spec(\R)$.

We conclude by the construction of the spectrum of motivic modular forms over $\spec(\C)$  by pullback from $\tmfr$ in subsection \ref{sub:tmfc}

An interesting phenomenon appears here: although the motivic stable category over $\spec(\C)$ is simpler, the technique presented here does not produce directly the spectrum $\tmfc$, and a detour by the equivariant stable homotopy category is necessary. The philosophical reason why this happens is that the motivic Steenrod algebra over $\spec(\R)$ and the $\G$-equivariant Steenrod algebra are essentially the same (modulo an extension of scalars), whereas the classical and motivic over $\spec(\C)$ are very different from one another (see \cite{Voe03b}).

\textbf{Acknowledgments} The author thanks Mike Hill for suggesting the construction of $\tmfq$ by its Tate square, and Dan Isaksen for numerous discussions about the motivic part of this project and closely related matters. The author thanks  Tom Bachmann for spotting a loophole in the proof of Lemma 2.1 in an earlier  version.

\tableofcontents

\part{A refined comparison between the motivic and equivariant stable homotopy categories} \label{part:comparison}

\section{Preliminaries} \label{sec:prelim}

\subsection{Set-up} \label{sub:setup}
As mentioned in the introduction, our main concern in this part is the relationship between classical and motivic homotopy theories. Precisely, we want to refine the known relationship between the stable motivic homotopy category over $\spec(\R)$ and and the $\G$-equivariant stable homotopy category provided by the Betti realization. 

Both are tensor triangulated categories, which arise as homotopy categories of stable monoidal model categories, the tensor product being the smash product $\wedge$. We denote the unit of both these monoidal categories by $\S$. Moreover, the monoidal structure is closed, and we denote by $F(-,-)$ the morphism object in each one of these categories.

The category of $\G$-spectra is compactly generated by the representation spheres: these are the one point compactification of finite virtual orthogonal representations of $\G$. In particular, any object can be approximated up to weak equivalence by a cellular object build from such spheres.

Let's denote by $S^{V}$ the one point compactification of the orthogonal representation $V$. By elementary representation theory, every such representation is of the form $n + m \sign$, that is the direct sum of $n$ times the trivial representation and $m$ times the sign representation. Let $V,W$ be virtual real orthogonal representations of $\G$. There is an isomorphism 
$S^{V \oplus W} \cong S^V \wedge S^W$, which is natural in $W,V$. This gives a monoidal functor
\begin{equation} \label{eq:rogqsh}
RO(\G) \rightarrow \shg,
\end{equation}
where $RO(\G)$ is the Grothendieck group of finite orthogonal representations of $\G$, under the direct sum.

\begin{de} \label{de:salpha}
We denote by $S_{\G}^{(1,0)}$ and $S_{\G}^{(1,1)}$, or simply $S^{(1,0)}$ and $S^{(1,1)}$, if there is no possible ambiguity, the images of the one dimensional trivial and sign representations respectively.
\end{de}

The suspension functor $\Sigma : \shg \rightarrow \shg$, which is part of the triangulated structure on $\shg$ coincides with the functor $S^{(1,0)} \wedge (-)$. The sphere $S^{(1,1)}$ is of purely equivariant nature.

\textbf{Warning:} note that our grading convention is similar to the one usually adopted among the $\mathbb{A}^1$-stable homotopy theorists. However, this differs a little from the conventions of the foundational article \cite{HK01} in the $\G$-equivariant setting.

The category $\shr$ also contains two flavors of one dimensional spheres, $S^{(1,0)}$, which is the suspension of the unit $\S \in \shr$, and another one, $S^{(1,1)} = \gm$, the multiplicative group. Again, this provides a monoidal functor
\begin{equation} \label{eq:zzmotr}
\Z \oplus \Z \rightarrow \shr,
\end{equation}
which sends $(n,m)$ to $S^{(n,m)}$.

\begin{rk}{Remark}
The spheres $S^{(n,m)} \in \shr$ does not generate the whole category of motivic spectra over $\spec(\R)$. Later on we will work in the cellular category to avoid complications coming from this. The cellular category being a right Bousfield localization of the motivic category, it contains less information than the latter. However, it is clear from \cite{I14} that this category is still suitable for explicit computations, in particular when it comes to investigate the motivic Adams spectral sequence.
\end{rk}

\subsection{Comparison functors} \label{sub:comparison}
The comparison between the motivic and equivariant settings is classically done using the Betti realization functor
\begin{equation}
\re : \shr \rightarrow \shg,
\end{equation} 
the informations we need about this functor is in \cite{HO14}, although Betti realization has been set up by Morel and Voevodsky \cite[p.122]{MV01}.
We will reformulate the $\G$-equivariant Betti realization functor $\re : \shr \rightarrow \shg$ and its right adjoint $\sing$ in a more convenient way in \ref{thm:qshinmot}.

Let us first recall the definitions of these functors, and the basic properties that we need in this paper. The material contained in this subsection is taken directly from \cite{HO14}.

\begin{de}[{\cite[Section 4.4]{HO14}}]\label{de:sing}
Let $\sing : \shg \rightarrow \shr$ be the stable functor induced by $$\sing(E)(X) = \hom_{\G}(X(\C),E),$$ where $X(\C)$ is the complex points of the motivic space $X$ over $\spec(\R)$, together with its involution coming from the complex conjugation.
\end{de}

\begin{pro}[{\cite{HO14}}] \label{pro:resing}
There is a Quillen adjunction
$$ \re : \shr \leftrightarrows \shg : \sing.$$
Moreover, the Betti realization functor $\re$ is strong symmetric monoidal, and takes the following values on spheres:\begin{itemize}
\item $\re(S^{(1,0)}) = S_{\G}^{(1,0)}$,
\item $\re(S^{(1,1)}) = S_{\G}^{(1,1)}$.
\end{itemize}
\end{pro}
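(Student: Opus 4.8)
The statement is recalled from \cite{HO14}; I sketch the argument one would give. First I would construct the realization unstably. To a smooth $\R$-scheme $X$ one assigns the topological space $X(\C)$ of its complex points, equipped with the $\G$-action by complex conjugation; since $(X\times_{\R}Y)(\C)=X(\C)\times Y(\C)$ as $\G$-spaces and $\spec(\R)(\C)$ is a point, this is a strong symmetric monoidal functor from smooth $\R$-schemes to $\G$-spaces. Left Kan extension along the Yoneda embedding produces a colimit-preserving strong symmetric monoidal functor on motivic spaces over $\spec(\R)$, and the key point is that it carries Nisnevich-local equivalences and $\mathbb{A}^1$-local equivalences to $\G$-weak equivalences (using that étale covers realize to equivariant open covers and that $\mathbb{A}^1(\C)=\C$ is $\G$-equivariantly contractible), so it descends to a left Quillen functor after the appropriate Bousfield localizations. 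The pointed version follows formally and stays strong symmetric monoidal, since realization preserves the pushouts and finite products computing the smash product of pointed motivic spaces.

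Next I would identify the realization on spheres. By construction $\re(S^{(1,0)})$ is the ordinary circle with trivial action, which is $S_{\G}^{(1,0)}$. For the Tate sphere, $\gm(\C)=\C^{\times}$ with complex conjugation, and the inclusion of the unit circle $\{\,|z|=1\,\}$ is a $\G$-equivariant deformation retract; the unit circle under conjugation, based at the fixed point $1$, is precisely the one-point compactification of the one-dimensional sign representation, i.e. $S_{\G}^{(1,1)}$ in the sense of Definition \ref{de:salpha}. Hence $\re(S^{(1,1)})=\re(\gm)=S_{\G}^{(1,1)}$. As a consistency check, $\PP^1(\C)=S^2$ with conjugation is $S_{\G}^{(1,0)}\wedge S_{\G}^{(1,1)}$, matching $\PP^1\simeq S^{(2,1)}$ in $\shr$.

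Finally I would pass to spectra. The category $\shr$ is presented by stabilizing pointed motivic spaces with respect to $\PP^1$, equivalently with respect to both $S^{(1,0)}$ and $\gm$, while $\shg$ is generated by the representation spheres $S^{n+m\sign}$, i.e. by $S_{\G}^{(1,0)}$ and $S_{\G}^{(1,1)}$. Since the unstable realization is strong symmetric monoidal and sends the stabilizing objects to the stabilizing objects by the computation above, it prolongs to a strong symmetric monoidal left Quillen functor $\re:\shr\to\shg$; concretely one checks it preserves the generating (trivial) cofibrations and sends the stabilization maps to stable equivalences. Its right adjoint exists by the adjoint functor theorem and is right Quillen, and one verifies it is given by the explicit formula $\sing(E)(X)=\hom_{\G}(X(\C),E)$ of Definition \ref{de:sing} by a direct computation of mapping spaces out of the cells of $X$. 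I expect the main technical obstacle to be the choice of compatible monoidal model categories presenting $\shr$ and $\shg$ for which $\re$ is \emph{simultaneously} left Quillen and strong monoidal on the point-set level: getting strong monoidality on spectra rather than merely on spaces requires controlling how $\re$ interacts with the smash product of symmetric, resp. orthogonal, spectra, which is where cofibrancy and flatness hypotheses enter. All of this is carried out in \cite{HO14}, which I would cite for the details.
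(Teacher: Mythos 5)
Your proposal is correct and ultimately takes the same route as the paper, which proves this proposition purely by citation: strong symmetric monoidality is \cite[Proposition 4.7]{HO14} and the values on spheres are read off from the start of Section 4.4 of \cite{HO14}. Your sketch of the underlying construction (complex points with conjugation, descent through the Nisnevich and $\mathbb{A}^1$-localizations, the retraction of $\C^{\times}$ onto the unit circle identifying $\re(\gm)$ with $S_{\G}^{(1,1)}$, and stabilization) is a faithful outline of what that reference actually does, so there is nothing to correct.
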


\begin{proof}
The fact that $\re$ is strong symmetric monoidal is \cite[Proposition 4.7]{HO14}.
By \cite{HO14}, at the beginning of Section 4.4, $\re(S^{(1,0)}) = S^{(1,0)}$ and $\re(S^{(1,1)}) = S^{(1,1)}$.
\end{proof}

In particular, the behaviour of $\re$ is not so mysterious on objects which are built from the motivic spheres (say, finite cellular objects with respect to $S^{(n,m)}$): it sends motivic spheres to motivic spheres (see \cite{HO14}) and pushouts to pushouts (as it is a left Quillen adjoint).

\begin{de}[{\cite[Section 2]{HO14}}]
Let $c^* : \shg \rightarrow \shr$ be the constant simplicial presheaf functor.
\end{de}

\begin{pro}[{\cite{HO14}}] \label{pro:secresing}
The functor $c^*$ has a right adjoint
$$c^* : \shg \leftrightarrows \shr : c_*,$$
and these satisfy
$$\re c^* \cong id_{\shg} \cong c_* \sing.$$
\end{pro}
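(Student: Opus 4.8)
The strategy is to establish the adjunction $c^* \dashv c_*$ and the identity $\re c^* \cong \mathrm{id}_{\shg}$ more or less by hand, and then to read off $c_* \sing \cong \mathrm{id}_{\shg}$ for free by passing to right adjoints. For the adjunction, the constant simplicial presheaf functor $c^*$ is a left adjoint --- a constant presheaf on a colimit is the colimit of the constant presheaves, so $c^*$ preserves all colimits, and its source is locally presentable --- hence it admits a right adjoint $c_*$, namely the global-sections functor given by evaluation at the base together with its residual $C_2$-action. One checks (as in \cite{HO14}) that these functors are Quillen for the model structures presenting $\shg$ and $\shr$, since constant presheaves are both Nisnevich-local and $\mathbb{A}^1$-local; this produces the derived adjunction $c^* : \shg \leftrightarrows \shr : c_*$ of the first assertion.

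Next I would prove $\re c^* \cong \mathrm{id}_{\shg}$. Both $\re$ (Proposition \ref{pro:resing}) and $c^*$ are left adjoints, hence colimit-preserving, so $\re \circ c^*$ is a colimit-preserving endofunctor of $\shg$. The category $\shg$ is compactly generated by the representation spheres $S^V$, $V \in RO(\G)$ (subsection \ref{sub:setup}), and all three of $\re$, $c^*$, $\mathrm{id}_{\shg}$ are strong monoidal, so it is enough to produce a natural isomorphism $\re c^*(S) \cong S$ for the two generating spheres $S^{(1,0)}$ and $S^{(1,1)}$ of Definition \ref{de:salpha}, compatible with the monoidal structure and with the maps between spheres; such a datum then propagates to all cellular objects and hence to all of $\shg$. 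For $S^{(1,0)}$, the constant presheaf on the equivariant simplicial circle is the motivic simplicial circle, and $\re(S^{(1,0)}) \cong S^{(1,0)}$ by Proposition \ref{pro:resing}. For $S^{(1,1)}$, one identifies $c^*(S^{(1,1)})$ with $\gm = S^{(1,1)} \in \shr$, after which $\re(S^{(1,1)}) \cong S^{(1,1)}$ again by Proposition \ref{pro:resing}. Alternatively, writing $\re$ as the left Kan extension along $\mathrm{Sm}/\R \to \shr$ of the functor $Y \mapsto \Sigma^{\infty} Y(\C)_+$ into $\shg$, a coend computation gives $\re(c^*E) \cong E \wedge \re(\S) \cong E$ in one stroke, using that the complex points of the terminal scheme form a single point (and that $\re$ is monoidal). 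Either way, $\re c^* \cong \mathrm{id}_{\shg}$.

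Finally, $c_* \sing \cong \mathrm{id}_{\shg}$ is formal: $\sing$ is right adjoint to $\re$ and $c_*$ is right adjoint to $c^*$, so $c_* \circ \sing$ is right adjoint to $\re \circ c^*$; since $\re c^* \cong \mathrm{id}_{\shg}$ by the previous step and right adjoints are unique up to canonical natural isomorphism, $c_* \sing \cong \mathrm{id}_{\shg}$. The step I expect to be the crux is the identification $c^*(S^{(1,1)}) \simeq \gm$ inside $\shr$ --- equivalently, the compatibility of the constant simplicial presheaf functor on the sign-representation sphere with Betti realization, which rests on the fact that complex conjugation on $\gm(\C) = \C^{\times} \simeq S^1$ is precisely the reflection defining $S^{\sign}$. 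Everything else --- the existence of $c_*$, the reduction to generators, and the inversion of adjoints --- is formal.
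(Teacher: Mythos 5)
Your final step is exactly the paper's: the paper cites \cite{HO14} for the existence of $(c^*,c_*)$ and for $\re c^*\cong \mathrm{id}_{\shg}$, and its only actual argument is that $c_*\sing$, being right adjoint to $\re c^*\cong \mathrm{id}_{\shg}$, must be $\mathrm{id}_{\shg}$ by uniqueness of right adjoints --- which is what you do. Where you differ is in re-proving the cited inputs, and there your main argument for $\re c^*\cong \mathrm{id}_{\shg}$ has two genuine problems. First, agreement of $\re c^*$ and $\mathrm{id}$ on the generating spheres does not by itself yield a natural isomorphism of functors: to propagate from generators to all of $\shg$ you need an actual natural transformation $\re c^*\to\mathrm{id}$ (or the reverse), defined on all objects, which you then check is an equivalence on generators; you never construct one, and none comes canonically from the two adjunctions alone (relating them through their right adjoints is precisely what is being proved). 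Second, the identification $c^*(S^{(1,1)})\simeq\gm$, which you yourself single out as the crux, is false in $\shr$: the constant presheaf functor of \cite{HO14} sends the free orbit $\G$ to $\spec(\C)$, so $c^*(S^{(1,1)})$ is the cofiber of $\spec(\C)_+\to S^0$, and this is not $\mathbb{A}^1$-equivalent to $\gm$ --- for instance its reduced integral motivic cohomology in bidegree $(1,1)$ vanishes (the restriction $\R^{\times}\to\C^{\times}$ is injective), whereas that of $\gm$ is $\Z$. Knowing that complex conjugation on $\C^{\times}$ is the reflection only tells you that both objects have Betti realization $S^{\sign}$, which is much weaker. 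Fortunately the false identification is also unnecessary: all the argument needs is $\re(c^*S^{(1,1)})\simeq S_{\G}^{(1,1)}$, which follows because $\re$ preserves cofiber sequences and $\re(\spec(\C))=\G$ with the swap action.

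Your alternative argument --- writing $\re$ as a homotopy left Kan extension of $Y\mapsto\Sigma^{\infty}Y(\C)_+$ and collapsing the coend for a constant presheaf to the value at the terminal object $\spec(\R)$, whose complex points form a single fixed point --- is essentially the correct proof and is close to how the statement is established in \cite{HO14}; it needs the usual care that the point-set coend computes the derived functor (cofibrancy of $c^*E$, or a cell-by-cell induction), but with that it closes the gap and makes the generator-checking route superfluous. The remaining pieces are fine: the existence of $c_*$ is formal (this is also all the paper says), and the passage to right adjoints is the paper's own argument; note only that your justification of Quillen-ness is pointed the wrong way --- locality of constant presheaves is what one uses to control $c_*$ and fully faithfulness statements, whereas left Quillen-ness of $c^*$ is about preservation of (trivial) cofibrations, which is where \cite{HO14} does its work.
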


\begin{proof}
The existence of the pair $(c^*,c_*)$ is formal (see \cite{HO14}). The formula $\re c^* \cong id_{\shg}$ is taken from \cite{HO14} and the last natural weak equivalence follows by uniqueness of adjoints.
\end{proof}

\subsection{Homotopy, homology, Steenrod algebras} \label{sub:steenrod}

Since the equivariant spheres are in the image of the strongly monoidal functor \eqref{eq:rogqsh}, they belong to the Picard group of $\shg$. Therefore, homotopy groups are naturally graded over the representation ring (\textit{i.e.} $RO(\G)$-graded) .
The exact same discussion can be repeated replacing \eqref{eq:rogqsh} by  \eqref{eq:zzmotr}.

\begin{de}
We use the following notations for the stable homotopy classes of maps in the various categories into play:\begin{enumerate}
\item for $E,F \in \sh$, and $n \in \N$, denote by $[E,F]_{-n}$, or $[E,F]^{n}$ the abelian group of stable homotopy classes of maps $E \rightarrow \Sigma^n F$,
\item for $E,F \in \shg$, and $(n-m) + m \sign \in RO(\G)$, denote by $[E,F]^{\G}_{(-n,-m)}$, or $[E,F]_{\G}^{(n,m)}$ the abelian group of stable homotopy classes of maps $E \rightarrow \S^{(n,m)} \wedge F$,
\item for $X,Y \in \shr$, $[X,Y]^{\R}_{(-n,-m)} = [X,Y]_{\R}^{(n,m)}$ is the stable homotopy classes of maps $X \rightarrow S^{(n,m)} \wedge Y$.
\end{enumerate}
Homotopy groups are denoted $\pi_*$, $\pi_*^{\G}$, and $\pi_*^{\R}$ with evident notations, and represent the functor $[S,-]_*$, with values in appropriately graded abelian groups. 
\end{de}

By adjunction, there is a natural isomorphism
\begin{equation} \label{eq:pistaradj}
\pi^{\G}_*(X) \cong \pi_*^{\R}(\sing(X)),
\end{equation}
for all $X \in \shg$.

The particular bigrading we have chosen for homotopy groups induces a bigrading on homology and cohomology groups, since for any $\G$-spectrum $E$ (resp. $E \in \shr$), $E$-cohomology is the functor $[-,E]_{\G}^* : \shg^{op} \rightarrow \ab^{RO(\G)}$, where the target is the category of $RO(\G)$-graded abelian groups (resp. $[-,E]^* : \shg^{op} \rightarrow \ab^{\Z^2}$). The same remark applies to homology functors.

In all of the categories in play there is a particular spectrum which is at the center of this paper: ordinary cohomology with coefficients in $\F$. These spectra are crucial when investigating the stable stems, since there is a "computable" Adams spectral sequence associated to each one of them, converging to $\pi_*(\S)$ in any of the category $\sh$, $\shg$, and $\shr$ (see \cite{HK01}, \cite{I14} for the two least classical ones).
 
In the stable homotopy category, the Postnikov coconnective cover of the generator $\S$ is $H\Z$, and killing $2$ gives the desired spectrum $H\F$ (see \textit{loc cit}).

In the $\G$-equivariant stable homotopy category, an analogous construction gives the Eilenberg-MacLane spectrum we are interested in: there is a natural generalization of the Postnikov tower, called the slice tower (see \cite{HHR}). The coconnective cover of the sphere spectrum $\S$ is $\hmz$, and killing the multiplication by $2$ on this spectrum produces the desired $\hmf$.

The motivic analogue $\hfr$ can be similarly described, using the motivic slice filtration instead of the $\G$-equivariant one (this is the main result of \cite{Voe03b}).

Finally, the motivic and equivariant versions of the Eilenberg-MacLane spectra are related to each other via Betti realization.

\begin{pro}[{\cite{HO14}}] \label{pro:rehfr}
There is a weak equivalence of ring spectra
$$ \hmf \cong \re\hfr.$$
\end{pro}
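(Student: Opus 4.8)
Since $\re$ is strong symmetric monoidal (Proposition~\ref{pro:resing}) it sends (commutative) ring spectra to (commutative) ring spectra, so $\re\hfr$ is a ring spectrum in $\shg$; it therefore suffices to produce a weak equivalence of ring spectra with $\hmf$. Recall from the discussion preceding the statement that $\hfr = H\Z^{\R}/2$, where $H\Z^{\R}$ denotes motivic $H\Z$ over $\spec(\R)$, and $\hmf = \hmz/2$. As $\re$ preserves cofibre sequences and $\re(\S)=\S$, it carries multiplication by $2$ to multiplication by $2$; hence $\re\hfr \simeq (\re H\Z^{\R})/2$, and everything reduces to a ring equivalence $\re H\Z^{\R} \simeq \hmz$.

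For this I would exploit the slice-theoretic descriptions of the two spectra already invoked in the text: over $\spec(\R)$ one has $H\Z^{\R}\simeq s_0(\S)$, the zeroth slice (the zeroth ``coconnective cover'') of the motivic sphere, while equivariantly the zeroth slice of $\S$ for Ullman's regular slice filtration is $\hmz$. The plan is to show that $\re$ intertwines these filtrations. By Proposition~\ref{pro:resing}, $\re(S^{(2,1)}) = \re(S^{(1,0)})\wedge\re(S^{(1,1)}) = S^{\rho}$, the regular representation sphere $S^{\rho}:=S^{(1,0)}_{\G}\wedge S^{(1,1)}_{\G}$; more generally $\re$ sends the motivic generating cell $S^{(2n,n)}\wedge\Sigma^{\infty}_{+}X$, for $X$ a smooth $\R$-scheme, to $S^{n\rho}\wedge\Sigma^{\infty}_{+}X(\C)$, and $X(\C)$ with its conjugation action is a finite $\G$-CW complex. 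Since these are exactly (suspensions of) the generators of the two effective covers, and since $\re$ is a left Quillen functor and hence preserves colimits, one gets that $\re$ carries the motivic effective filtration into the regular slice filtration, under the reindexing determined by $S^{(2,1)}\mapsto S^{\rho}$, compatibly with cellular approximations; tracking this through the slice tower of $\S$ then yields $\re H\Z^{\R} = \re(s_0\S)\simeq s_0(\re\S) = s_0(\S)\simeq\hmz$. The two ring structures agree because each is the one induced on the zeroth slice of the unit by the lax monoidal slice truncation, and this construction is compatible with the monoidal functor $\re$.

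The main obstacle is the precise compatibility of the two slice filtrations under $\re$: a left adjoint automatically preserves the colimit-built effective covers, but one must also check that it carries objects with trivial $n$-effective cover to the corresponding equivariant ones, so that the truncations — and not merely the covers — correspond; this is where one needs to control $\re$, via its right adjoint $\sing$, on the relevant acyclics, and it is ultimately the finite-$\G$-CW nature of Betti realizations of smooth $\R$-schemes that makes this work. An alternative route, which is the one taken in \cite{HO14}, is to sidestep the slice filtration and compare explicit Eilenberg--MacLane models directly, identifying the Betti realization of motivic cohomology over $\R$ with $\G$-equivariant Bredon cohomology with constant $\F$-coefficients; one can also cross-check on underlying spectra, using that $i^{*}\re \simeq \re\, p^{*}$ as functors $\shr\to\sh$ (diagram~\eqref{eq:square}) together with the fact that the complex Betti realization of motivic $H\F$ is $H\F$. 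In every case the ring structure is inherited from the strong monoidality of $\re$.
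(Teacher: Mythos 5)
The paper itself offers no argument for this proposition: it is imported directly from \cite{HO14} (the bracketed citation is the ``proof''), so the only part of your proposal that matches what the paper actually does is your closing sentence deferring to the explicit Eilenberg--MacLane comparison of Heller--Ormsby. Your preliminary reductions are fine: $\re$ is exact and strong symmetric monoidal, so it preserves cofibre sequences and units, and killing $2$ reduces the statement to a ring equivalence $\re H\Z_{\R} \simeq \hmz$ for the integral motivic Eilenberg--MacLane spectrum.

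The slice-filtration route you then sketch has a genuine gap, and it is exactly the one you flag but do not close: the identification $\re(s_0\S) \simeq s_0(\re\S)$. Monoidality and being a left adjoint only give the easy half, namely that $\re$ carries the motivic effective covers into regular-slice covers (your observation that the realized generators are slice $\geq 2n$). They give no control whatsoever on coconnectivity: there is no formal reason why $\re$ of a slice-$\leq 0$ motivic spectrum, or even of $s_0\S$ itself, should be slice $\leq 0$ equivariantly, and that is precisely what is needed to identify the truncation rather than merely the cover. You gesture at controlling the acyclics via $\sing$ and the finite-$\G$-CW nature of $X(\C)$, but no such argument is given, and in the literature the logical order is the reverse of the one you propose: one first proves $\re H\Z_{\R}\simeq \hmz$ by a direct comparison of Eilenberg--MacLane models (this is the content of \cite{HO14}), and only afterwards can one deduce the compatibility of zero slices by combining it with Levine's theorem $s_0(\S)\simeq H\Z_{\R}$ and the equivariant computation $s_0(\S)\simeq \hmz$. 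So as a standalone proof your primary argument is incomplete at its central step; what actually closes it is the appeal to \cite{HO14}, which is all the paper does.
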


Once the Eilenberg-MacLane spectrum is constructed, it is natural to investigate the corresponding Steenrod algebra. It turns out that, since the cohomology of $S^{(0,0)}$ is not a field in the motivic and equivariant setting, the dual object is more structured (although it is merely a flat Hopf algebroid, and not a Hopf algebra as in the classical case).

\begin{pro}[Ullman, Hoyois]
The spectra $H\F \in \sh$, $\hmf \in \shg$, and $\hfr \in \shr$ are commutative ring spectra.
\end{pro}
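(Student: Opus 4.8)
The plan is to obtain each of these ring structures from the $E_{\infty}$-structure on the sphere together with the slice/Postnikov descriptions recalled just above. In each of $\sh$, $\shg$ and $\shr$ the unit $\S$ is a commutative monoid for the smash product, as these categories are homotopy categories of symmetric monoidal model categories (or, $\infty$-categorically, symmetric monoidal stable categories). Moreover, the truncation used to produce the integral Eilenberg--MacLane spectrum --- the Postnikov coconnective cover in $\sh$, and the (slice/effective) coconnective cover in $\shg$ and $\shr$ --- comes from a $t$-structure that is compatible with $\wedge$, in the sense that its connective part contains $\S$ and is closed under $\wedge$. For such a $t$-structure the coconnective truncation $\tau_{\leq 0}$ is lax symmetric monoidal, so $\tau_{\leq 0}\S$ is again a commutative monoid; this gives the commutative ring structures on $H\Z \in \sh$, on $\hmz \in \shg$, and on the integral motivic cohomology spectrum in $\shr$.

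The second step, ``killing multiplication by $2$'', is where care is needed: the cofibre of the degree-$2$ self-map of a commutative ring spectrum is not a commutative ring spectrum in general --- already the mod $2$ Moore spectrum $\S/2$ carries no associative multiplication --- so one cannot simply write $H\F = H\Z \wedge \S/2$ and be done. Instead I would pass to the abelian symmetric monoidal category of coefficients, namely the heart of the relevant $t$-structure: ordinary abelian groups in the classical case, the category of $\G$-Mackey functors with the box product in the equivariant case, and a category of (effective) homotopy modules / Nisnevich sheaves with transfers in the motivic case. There the coefficient object --- $\F$, the constant Mackey functor $\underline{\F}$ (which is in fact a Tambara functor, so carries the full norm structure), respectively the mod $2$ motivic cohomology sheaf --- is a commutative monoid, being the quotient of the unit by the ideal generated by $2$. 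Applying the Eilenberg--MacLane functor $H\colon \Ccal^{\heartsuit}\to\Ccal$, which is lax symmetric monoidal (it is right adjoint to the monoidal truncation $\tau_{\leq 0}$ followed by the monoidal inclusion of connective objects), then yields the desired commutative ring structures on $H\F$ and on $\hmf$.

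For $\hfr$ the last step needs one extra word, since the mod $2$ motivic cohomology spectrum over $\spec(\R)$ is not concentrated in a single degree of the motivic slice $t$-structure, so it is not literally the Eilenberg--MacLane object of a coefficient sheaf; here I would instead invoke the explicit highly structured model for $\hfr$, e.g. by base change along $\spec(\R)\to\spec(\Z)$ from Spitzweck's $E_{\infty}$ motivic cohomology spectrum, or as the Hopkins--Morel--Hoyois quotient of $\mathrm{MGL}$, which is the content of Hoyois's work; the equivariant statement, that $H\underline{M}$ is a commutative ring for $\underline{M}$ a commutative Green functor, is Ullman's, and the classical statement is classical. The main obstacle is therefore entirely concentrated in the mod $2$ reduction: because $\S/2$ is not a ring, one genuinely needs either the heart of a monoidal $t$-structure or an external $E_{\infty}$-construction rather than a formula in $\S$ and $H\Z$ alone. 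Finally, each of these $E_{\infty}$-structures in particular descends to a commutative monoid structure in the respective homotopy category, which is what the statement asserts.
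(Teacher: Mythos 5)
Your proposal is correct in substance, but it is worth noting that the paper's own ``proof'' is nothing more than three citations: classical for $H\F$, Ullman's theorem for $\hmf$, and Hoyois for $\hfr$ --- and your argument, after its constructive detour, bottoms out on exactly the same two references for the two nontrivial cases. What you add is a genuine explanation of \emph{why} the naive description in the surrounding text (``kill multiplication by $2$ on the coconnective cover'') does not by itself produce a ring structure, since $\S/2$ admits no multiplication, and how the structure actually arises: a monoidal truncation whose right adjoint (the Eilenberg--MacLane functor on the heart) is lax symmetric monoidal, applied to a commutative monoid in the heart. That mechanism is the standard one classically and works equivariantly with the Postnikov heart of Mackey functors, so your route is a legitimate expansion of the paper's citation-only proof; it also correctly isolates that over $\spec(\R)$ one cannot argue this way and must invoke Spitzweck/Hoyois, which is what the paper does. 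Two small corrections: in the equivariant heart the unit is the Burnside Mackey functor, and its quotient by the ideal generated by $2$ is \emph{not} the constant Mackey functor $\underline{\F}$ (the $C_2/C_2$-level would be $\F[t]/(t^2)$), so you should simply exhibit $\underline{\F}$ directly as a commutative Green (indeed Tambara) functor rather than as a quotient of the unit; and Ullman's theorem as cited in the paper concerns the genuine (norm-carrying) commutative structure, for which a Green functor does not suffice --- one needs the Tambara structure on $\underline{\F}$, which you do mention parenthetically and should make the actual hypothesis. With those adjustments your argument is a correct, more self-contained variant of the paper's proof, at the cost of some care about which $t$-structure and which level of commutativity (homotopy-commutative, na\"ive $E_\infty$, genuine) is actually being produced, a distinction the paper's one-line proof never has to confront.
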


\begin{proof}
for $H\F$, this is classical. The equivariant case is a consequence of \cite[Theorem 1.3]{Ul13}, and $\hfr$ being a commutative ring spectrum is \cite[Paragraph 4.2]{Hoy12}).
\end{proof}

To a commutative ring spectrum, there is a natural way to associate a commutative Hopf algebroid of cooperations (see \cite[Proposition 2.2.3]{Ra86}).

It turns out that, for any of the spectra $H\F$, $\hmf$, and $\hfr$, the associated Hopf algebroid of cooperations in homology is flat. These are denoted respectively: \begin{enumerate}
\item $(\F, \A)$,
\item $(\hmf_*,\Aq)$,
\item and $(\hfr_*,\Ar)$.
\end{enumerate}

There is a well-known isomorphism
\begin{equation}
\A \cong \F[\pxi_i, i \geq 1]
\end{equation}
as a commutative algebra, where $|\pxi_i| = 2^i -1$.

The diagonal of this Hopf algebroid is given by 
\begin{equation}
\Delta(\pxi_i) = \sum_{k = 0}^i \pxi_k^{2^{i-k}} \otimes \pxi_k.
\end{equation}

We use the notation $\pxi$ instead of the more classical one $\xi$ to emphasize the difference between the non-equivariant and equivariant Steenrod algebras.

Hu and Kriz has identified the corresponding $\G$-equivariant object in \cite{HK01}, and Voevodsky gives the motivic analogue in \cite{Voe03b}. We recall here the structure of these two objects, starting by the coefficient rings of equivariant and motivic homology theory.

\begin{pro}[Hu-Kriz, Voevodsky] \label{pro:coeffring}
The coefficient rings of motivic and $\G$-equivariant cohomology theories are: \begin{enumerate}
\item $\hfr_{*} = \F[\rho, \tau]$, with the evident ring structure,
\item $\hmf_{*} = \F[\rho, \tau] \oplus \nor \F[\rho^{-1},\tau^{-1}]$. The ring structure of the latter being the square zero extension of $\F[\rho, \tau]$ by the $\F[\rho, \tau]$-module $\nor \F[\rho^{-1},\tau^{-1}]$.
\end{enumerate}
The grading is $|\rho| = (-1,-1)$, $|\tau| = (0,-1)$, $|\nor| = (0,2)$.
\end{pro}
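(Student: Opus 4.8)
The plan is to treat the two assertions as independent computations --- of the bigraded motivic homotopy of $\hfr$ and of the $RO(\G)$-graded homotopy of $\hmf$ --- and I would carry them out as follows.

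For (1), the first step is to observe that $\hfr_{*} = \pi^{\R}_{*,*}(\hfr)$ is, up to a reindexing by a sign, the bigraded motivic cohomology ring $H^{*,*}_{\mathrm{mot}}(\spec(\R);\F)$ of the base point. I would then invoke the Beilinson--Lichtenbaum theorem (equivalently, Voevodsky's resolution of the Milnor conjecture, \cite{Voe03b}): $H^{p,q}_{\mathrm{mot}}(\spec(\R);\F)$ vanishes for $p > q$, equals $K^{M}_{p}(\R)/2$ for $p = q$, and for $0 \leq p \leq q$ is naturally isomorphic to the mod $2$ Galois cohomology $H^{p}(\mathrm{Gal}(\C/\R);\F)$. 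Since $\mathrm{Gal}(\C/\R) = \G$ and $H^{*}(\G;\F) = \F[\rho]$ with $\rho = \{-1\}$ in motivic bidegree $(1,1)$, and since the weight class $\tau \in H^{0,1}_{\mathrm{mot}}(\spec(\R);\F)$ is such that multiplication by $\tau$ induces an isomorphism $H^{p,q} \to H^{p,q+1}$ whenever $q \geq p$, the whole bigraded ring collapses to the polynomial ring $\F[\rho,\tau]$. Reindexing homologically gives the stated degrees $|\rho| = (-1,-1)$ and $|\tau| = (0,-1)$.

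For (2), I would use the isotropy separation (Tate) cofiber sequence
$$ \univ_{+} \wedge \hmf \longrightarrow \hmf \longrightarrow \tuniv \wedge \hmf $$
and pass to $RO(\G)$-graded homotopy to obtain a long exact sequence. Write $a_{\sign} \in \pi^{\G}_{(-1,-1)}(\S)$ for the Euler class of the sign representation and $u_{\sign}$ for the orientation (Thom) class of $\hmf$; after checking that $\rho = a_{\sign}$ and $\tau = u_{\sign}$ inside $\hmf_{*}$ (each lives in a rank-one group, and is nonzero), one has $\tuniv \simeq a_{\sign}^{-1}\S$, so the right-hand term is the localization $a_{\sign}^{-1}\hmf_{*}$. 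I would identify this term independently from the geometric fixed points $\Phi^{\G}\hmf$ --- a short computation, e.g.\ from the cellular chains of $\tuniv$, giving $\pi_{*}\Phi^{\G}\hmf = \F[b]$ with $|b| = 1$ --- which pins it down as the Laurent-type ring $\F[\rho^{\pm 1},\tau]$. For the left-hand term, the Borel homology $\pi^{\G}_{*}(\univ_{+} \wedge \hmf)$, I would use the skeletal filtration of $\univ$ (whose non-basepoint cells are $\G$-free) and the resulting homotopy-orbit spectral sequence, which is governed by $H_{*}(B\G;\F)$ acting on the underlying homotopy $\pi^{e}_{*}\hmf$ and collapses. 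Running the long exact sequence and chasing the connecting homomorphism should then exhibit $\hmf_{*}$ as the polynomial subring $\F[\rho,\tau]$ generated by $\rho$ and $\tau$, together with a ``negative cone'' which, as an $\F[\rho,\tau]$-module, is $\nor\F[\rho^{-1},\tau^{-1}]$ with bottom class $\nor$ in bidegree $(0,2)$, the elements $\rho$ and $\tau$ acting by lowering the corresponding negative exponent and then annihilating; that the product of any two negative-cone classes vanishes --- so that $\hmf_{*}$ is the asserted square-zero extension --- I would verify directly in the relevant bidegrees. As a variant one can avoid the Tate square altogether and compute $\hmf_{*}$ from $\G$-CW resolutions of the representation spheres $S^{(p,q)}$, evaluating the Bredon homology of the constant Mackey functor $\underline{\F}$; this is the route taken in \cite{HK01}.

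The step I expect to be the main obstacle is the ring-and-module bookkeeping in part (2): fitting the positive and negative cones together --- the exact bidegree of $\nor$, the precise action of $\rho$ and $\tau$ on the negative cone, and the square-zero relation --- requires an honest identification of the connecting map in the isotropy separation sequence together with careful attention to the $RO(\G)$-grading conventions, which (as the warning above flags) differ from those of \cite{HK01}. In part (1) the only genuine subtlety is citing the correct form of Beilinson--Lichtenbaum and translating between the motivic cohomology indexing and the homological bigrading used here for $\hfr_{*}$; the rest is formal. Since both computations are carried out in \cite{HK01} and \cite{Voe03b}, in the body of the paper it suffices to cite those sources.
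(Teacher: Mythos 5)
The paper offers no argument for this proposition: it is quoted as a known computation, attributed to Hu--Kriz and Voevodsky, which is exactly what your closing sentence concludes is sufficient. Your sketch is sound and reproduces the standard proofs from those sources (Beilinson--Lichtenbaum for the motivic coefficient ring, isotropy separation together with the identifications $\rho = a_{\sigma}$, $\tau = u_{\sigma}$ and $\pi_*\Phi^{\G}\hmf \cong \F[b]$ for the equivariant one, with the degree bookkeeping for $\nor$ coming out correctly in the paper's grading), so the only difference is that the extra detail is not needed in the paper.
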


\begin{nota} \label{nota:manddm}
Let $\M$ be the $\F$-algebra $\hfr_{*} = \F[\rho, \tau]$, and denote $D\M$ the $\F$-linear graded dual of $\M$.
In particular $\hmf_{*} = \M \oplus \nor D\M$.
\end{nota}

\begin{pro}[{\cite[Theorem 6.41]{HK01},\cite[Theorem 12.6 and Lemma 12.11]{Voe03}}] \label{pro:steenrod}
The $\G$-equivariant dual Steenrod algebra is the commutative Hopf algebroid $(\hmf_*,\Aq_*)$, where
\begin{equation*}
\Aq_* \cong \hmf_*[\tau_i, \xi_{i+1}, i \geq 0]/(\tau_i^2 + \rho\tau_i +  \eta_R(\tau)\xi_{i+1})
\end{equation*}
as a $\hmf_*$-algebra, and $\eta_R(\tau) = \rho \tau_0 + \tau$. Moreover, the diagonal are given by the formul\ae
\begin{equation*}
\Delta(\tau_i) = \tau_i \otimes 1 + \sum_{k=0}^i \ \xi_{i-k}^{2^k} \otimes \tau_{k},
\end{equation*}
and
\begin{equation*}
\Delta(\xi_i) = \sum_{k=0}^i \ \xi_{i-k}^{2^k} \otimes \xi_{k}.
\end{equation*}
the degree are $|\xi_i| = (2(2^i-1),2^i-1)$ and $|\tau_i|=(2(2^i-1)+1,2^i-1)$.
\vspace{.5cm}
The motivic dual Steenrod algebra is
$(\M,\Ar)$, where
\begin{equation*}
\Ar_* \cong \M[\tau_i, \xi_{i+1}, i \geq 0]/(\tau_i^2 + \rho\tau_i +  \eta_R(\tau)\xi_{i+1})
\end{equation*}
Moreover, the diagonal are given by the formul\ae
\begin{equation*}
\Delta(\tau_i) = \tau_i \otimes 1 + \sum_{k=0}^i \ \xi_{i-k}^{2^k} \otimes \tau_{k},
\end{equation*}
and
\begin{equation*}
\Delta(\xi_i) = \sum_{k=0}^i \ \xi_{i-k}^{2^k} \otimes \xi_{k}.
\end{equation*}
The degrees are the same as in the $C_2$-equivariant case.
\end{pro}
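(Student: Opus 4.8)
The equivariant statement is \cite[Theorem 6.41]{HK01} and the motivic one is \cite[Theorem 12.6 and Lemma 12.11]{Voe03}; I outline the common strategy, which in both cases reduces the computation to Milnor's classical identification $\A \cong \F[\pxi_i,\ i\geq 1]$. The plan is to compute the homology Hopf algebroid of the Eilenberg--MacLane spectrum with itself, namely $(\pi_\star H,\ \pi_\star(H\wedge H))$ with $H = \hmf$ (resp.\ $H = \hfr$), and to read off the presentation. The first step is to prove that $H\wedge H$ is a \emph{free} $H$-module: it should split as a wedge $\bigvee_\alpha \Sigma^{V_\alpha}H$ indexed by a homogeneous $\pi_\star H$-basis of $\pi_\star(H\wedge H)$, the $V_\alpha$ being representation (resp.\ motivic) spheres. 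Classically this is automatic because $\F$ is a field; here $\pi_\star H$ is not, so instead I would build the splitting cell by cell, checking that the bigraded homotopy of a point together with a fundamental class in degree zero leave no room for obstructions. Equivariantly this is governed by the isotropy-separation square: the underlying spectrum of $\hmf\wedge\hmf$ is a wedge of copies of $H\F$ by Milnor, and by monoidality of $\Phi^{\G}$ the analogous freeness holds on geometric fixed points, which together force the equivariant statement. Motivically it is exactly Voevodsky's computation of $H\F^{\star\star}(H\F)$ as a free module over the motivic cohomology of a point.

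Granting freeness, the next step is to name the algebra generators and fix their bidegrees. The classes $\xi_i$ are detected by dualizing the admissible monomials in the reduced powers $Sq^{2^i}$ already available classically; their bidegrees $(2(2^i-1),2^i-1)$ are forced by those of $Sq^{2^i}$ once one keeps track of weights (the operation $Sq^{2k}$ has weight $k$). The classes $\tau_i$ are the genuinely new ``exterior'' generators, dual to the Milnor primitives $Q_i$, with $|\tau_i| = (2(2^i-1)+1,2^i-1)$; equivariantly $\tau_0$ restricts to zero on the underlying spectrum but is nonzero on geometric fixed points, which both certifies that it is new and pins down its degree (this is where the purely equivariant sphere $S^{(1,1)}$ enters). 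That these classes generate, subject only to the quadratic relation in the statement, would then follow from freeness by comparing Poincar\'e series in each bidegree against the classical answer $\F[\pxi_i]$ after extension of scalars.

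The crucial and least formal point is the quadratic relation, whose shadow over $\spec(\C)$ (and, classically, over a point) is $\tau_i^2 = \tau\,\xi_{i+1}$: the square of a $Q_i$-dual is not zero but is linked to the next reduced power by the Bott element $\tau$, reflecting that $Sq^1$ has weight $0$ while $Sq^2$ has weight $1$. Over $\spec(\R)$ the right unit is twisted, $\eta_R(\tau) = \rho\,\tau_0 + \tau$, and the additional $\rho$-terms appear; I would establish the relation either by computing the $\Ar_*$-coaction on a suitable motivic cohomology class of a point, or, following Voevodsky, by dualizing the motivic Adem relation for $Sq^1Sq^{2^i}$, which carries precisely the $\rho$- and $\tau$-corrections. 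I expect this to be the main obstacle: it is the one place where the failure of $\pi_\star H$ to be a field and the arithmetic of $\spec(\R)$ genuinely intervene, everything else being formal or a transcription of Milnor's calculation.

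Finally one computes the diagonal. The formulas for $\Delta(\xi_i)$ and $\Delta(\tau_i)$ are forced: they must reduce to Milnor's on the underlying classical dual Steenrod algebra, the generators have been chosen compatibly with the forgetful (resp.\ Betti realization) map, and a bidegree count leaves no room for correction terms. One then checks that $\Delta$ descends to the quotient by the quadratic relation once the left and right units $\eta_L,\eta_R$ are carefully distinguished, a direct manipulation using $\eta_R(\tau) = \rho\tau_0 + \tau$. As a consistency check, extension of scalars to the motivic cohomology of $\spec(\C)$ recovers the $\spec(\C)$ computation, and Betti realization matches the motivic answer over $\spec(\R)$ with the equivariant one, exactly as Proposition~\ref{pro:rehfr} leads one to expect.
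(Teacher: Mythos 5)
The paper gives no argument for this proposition beyond the attribution: it is quoted from Hu--Kriz \cite[Theorem 6.41]{HK01} and Voevodsky \cite[Theorem 12.6 and Lemma 12.11]{Voe03}, which is exactly what your opening sentence does, so your proposal matches the paper's treatment and the supplementary sketch you give is broadly consistent with how those references proceed (freeness over the coefficients, generators read off the coaction on the cohomology of $B\Z/2$-type objects, then the quadratic relation and the Milnor-style diagonal). One slip in the sketch worth correcting: $\tau_0$ does \emph{not} restrict to zero on the underlying spectrum --- under the restriction map ($\rho\mapsto 0$, $\tau\mapsto 1$) the class $\tau_i$ maps to the nonzero classical class $\pxi_{i+1}$, as the degree $2(2^i-1)+1=2^{i+1}-1$ forces, and what singles out the $\tau_i$ is rather their duality with the Milnor primitives $Q_i$ (equivalently their behaviour after inverting $\rho$, as in Lemma \ref{lemma:definephi}); this does not affect the proof, which is carried by the citations.
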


As we are interested in cohomology computations, a crucial property for us is the relationship between motivic cohomology and equivariant cohomology. The following result will be the starting point of the comparison.

\begin{rem} \label{rk:steenrodalgebras}

The Hopf algebroid $\Aq$ can be expressed as an extension of $\Ar$:
$$\Aq = \hmf_* \otimes_{\hfr_*} \Ar.$$
This simple observation, together with Theorem \ref{thm:hmote} is a reason why it is easier to build a spectrum in $\shr$ with prescribed cohomology once we know how to do it in $\shg$.

Note that the corresponding statements relating the dual Steenrod algebras for $H\F$ and $\hfc$ are utterly false.
\end{rem}

\subsection{Cellularization} \label{sub:cellularization}

\begin{de}\label{de:cellular}
Let $\shr \rightarrow \cell$ be the right Bousfield localization of $\shr$ where the weak equivalences are maps which induce an equivalence in bigraded homotopy groups.
\end{de}

Note that this right Bousfield localization exists since $\shr$ is right proper and combinatorial. 

\begin{rem} \label{rk:cellularrepl}
Of course, by definition of the cellular category, functors as (bigraded) homotopy or (bigraded) (co)homology do not see the difference between an object and its cellular replacement. In particular, this category is well-suited for computational purposes (this is the category in which \cite{I14} takes place for example).
\end{rem}

\begin{pro}
The model category $\cell$ satisfies the following properties: \begin{itemize}
\item it is a stable monoidal closed model category,
\item the functor $\shr \rightarrow \cell$ is strongly monoidal.
\end{itemize}
\end{pro}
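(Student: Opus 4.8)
The plan is to realize $\cell$ as the right Bousfield localization $R_K\shr$ of $\shr$ at the set $K=\{S^{(n,m)}:(n,m)\in\Z^2\}$, and to extract all three properties from the standard description of such a localization together with the single algebraic input $S^{(n,m)}\wedge S^{(n',m')}\cong S^{(n+n',m+m')}$. First I would record the identification: for this $K$, a map $f$ is a $K$-colocal equivalence precisely when $\mathrm{Map}(S^{(n,m)},f)$ is an equivalence for all $(n,m)$, which unwinds to $[S^{(a,b)},f]$ being an isomorphism in every bidegree, so $R_K\shr$ is exactly the $\cell$ of Definition \ref{de:cellular}. By the general theory of right Bousfield localizations (Hirschhorn), whose existence hypothesis is already granted here, the fibrations of $\cell$ coincide with those of $\shr$, the weak equivalences are the $\pi_{*,*}$-isomorphisms, the cofibrant objects are precisely the cellular objects (the objects of the localizing subcategory $\mathcal C\subseteq\mathrm{Ho}(\shr)$ generated by $K$), and cellular replacement induces an equivalence $\mathrm{Ho}(\cell)\simeq\mathcal C$.

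Next I would settle stability. The category $\cell$ is pointed, since $\shr$ is and the localization leaves the underlying category unchanged. The subcategory $\mathcal C$ is closed under the suspension $\Sigma=S^{(1,0)}\wedge(-)$ and under $\Sigma^{-1}=\Omega$, because $K$ is closed under $\Sigma^{\pm1}$ (as $\Sigma^{\pm1}S^{(n,m)}=S^{(n\pm1,m)}$) and $\Sigma^{\pm1}$ commutes with the homotopy colimits used to build cellular objects; hence $\mathcal C$ is a triangulated category on which $\Sigma$ is invertible, and via $\mathrm{Ho}(\cell)\simeq\mathcal C$ this means $\cell$ is stable. (Equivalently, one may quote the general statement that a right Bousfield localization of a stable model category at a set of objects closed under (de)suspension is again stable.)

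For the monoidal closed structure I would keep the smash product $\wedge$ and internal hom $F(-,-)$ of $\shr$; the localization alters neither, so the underlying category of $\cell$ is already a closed symmetric monoidal category. The unit $\S=S^{(0,0)}$ lies in $K$, hence is cellular, hence cofibrant in $\cell$, so the unit axiom holds. The heart of the matter is the pushout--product axiom, and the only nontrivial ingredient is that $K$ is closed under $\wedge$: from this the derived smash product of two cellular objects is again cellular (one builds cellular objects from $K$ by homotopy colimits, and $\wedge$ commutes with homotopy colimits), so $\mathcal C\subseteq\mathrm{Ho}(\shr)$ is a closed tensor-triangulated subcategory containing the unit. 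To promote this to the model-categorical statement one checks the pushout--product axiom of $\cell$ on a generating set of (trivial) cofibrations; taking these with cellular domain and codomain, the verification reduces to the pushout--product axiom of $\shr$ together with the remark that a $\shr$-fibration which is a $\pi_{*,*}$-isomorphism has the right lifting property up to homotopy against $S^{(n,m)}\wedge k$ for any $\shr$-cofibration $k$ between cellular objects --- precisely because a $\pi_{*,*}$-isomorphism induces an isomorphism on $[W,-]$ for every cellular $W$. Closedness follows by adjunction: $F(A,-)$ is right Quillen on $\cell$ whenever $A$ is cofibrant. (One could instead invoke the general machinery of monoidal right Bousfield localizations applied to the $\wedge$-closed set $K$ containing the unit.)

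Finally, the functor $\shr\to\cell$ of the statement is the identity on underlying categories, and it carries the closed symmetric monoidal structure of $\shr$ unchanged to that of $\cell$, so it is (trivially) strong symmetric monoidal; that it is a Quillen functor is part of the localization data. I expect the pushout--product axiom to be the only real obstacle: in contrast with a left Bousfield localization, the cofibrations of $\cell$ form a proper subclass of those of $\shr$, so one cannot merely transport the monoidal structure along the identity, and the bookkeeping above (handling the generating cofibrations of the cellularization, or appealing to the monoidal-localization criteria) is where the work lies. Stability, the unit axiom, and the strong monoidality of $\shr\to\cell$ are, by comparison, formal.
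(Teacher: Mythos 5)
Your proposal is correct and follows essentially the same route as the paper: the paper simply cites the general results of Barnes--Roitzheim on stable and monoidal right Bousfield localizations (their Theorem 4.1 for existence, Proposition 4.6 for stability, Theorem 6.2 for the monoidal structure), applied to the set $K=\{S^{(n,m)}\}$, which is stable and monoidal -- exactly the ``general machinery'' you mention as an alternative, while your hand-verification of stability and of the pushout--product axiom is just an inlined sketch of those cited proofs.
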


\begin{proof}
This is the content of \cite{BR12b} for $K = \{ S^{(n,m)}, n,m\in\Z \}$:\begin{enumerate}
\item \cite[Theorem 4.1]{BR12b} for the existence (this is originally a result of Hirchhorn),
\item \cite[Proposition 4.6]{BR12b}  for stability, since $K$ is obviously stable in our case, in the sense of \cite[Definition 4.1]{BR12b},
\item \cite[Theorem 6.2]{BR12b} for the assertion regarding the monoidal structure, since $K$ is trivially monoidal in our case, in the sense of \cite[Definition 6.1]{BR12b}. \end{enumerate}
\end{proof}

\begin{warning} \label{war:cellular}
By remark \ref{rk:cellularrepl} and since we are interested in computations in homotopy and homology of cellular objects, we now restrict ourselves to the cellular category. Note that in particular, \begin{itemize}
\item $\shr$ and $\shc$ denotes the appropriate cellular categories,
\item $\re$ and $\sing$ denotes the factorizations of these functors through the cellular category namely, compose $\sing$ with the cellularization, and observe that $\re$ has to factor through $\cell$ since it has a left section (see Proposition \ref{pro:secresing}). 
\end{itemize}
\end{warning}

We have now all the tools we need to start our identification of $\shg$.

\section{Descent and cohomology} \label{sec:descent}

\subsection{An identification of $\shg$}  \label{sub:idqsh}

The objective of this section is to prove Theorem \ref{thm:qshinmot}. Roughly speaking, we want a Quillen equivalence $\shg \cong \rs\dashmod mod(\shr)$ such that the pair $(\re,\sing)$ is identified through this equivalence with $(\rs \wedge (-), U)$: respectively the extension of scalars and the forgetful functor.

Certainly, if this is the case, the following are satisfied
\begin{enumerate}
\item $\sing$ commutes with homotopy colimits,
\item $\sing$ is a conservative functor (\textit{i.e.} sends weak equivalences to weak equivalences),
\item $\re$ and $\sing$ satisfies the projection formula, that is for any $E \in \shg$ and $X\in \shr$, the natural map \begin{equation*} \sing(E) \wedge X \rightarrow \sing(E \wedge \re(X)) \end{equation*} is a weak equivalence.
\end{enumerate}

We will show these properties in the next few lemmas before proving Theorem \ref{thm:qshinmot}.

\begin{lemma} \label{lemma:holim}
The functor $\sing$ commutes with homotopy colimits.
\end{lemma}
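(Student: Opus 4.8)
Show that $\sing : \shg \to \shr$ commutes with homotopy colimits.

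The plan is to exploit the fact that $\sing$ is a right adjoint of $\re$ in a Quillen adjunction, so it automatically preserves homotopy limits; the content is that it also preserves homotopy colimits. Since all categories in sight are stable, a right adjoint between stable (triangulated, well-generated) categories commutes with arbitrary homotopy colimits as soon as it commutes with arbitrary coproducts, i.e. as soon as $\re$ preserves compact objects. So the first step is to reduce to the coproduct statement: in a stable model category, every homotopy colimit can be built from coproducts and cofiber sequences (e.g. a sequential homotopy colimit is the cofiber of $1-\mathrm{shift}$ on a countable coproduct, and a general colimit is a coequalizer hence a cofiber of a map between coproducts); $\sing$, being exact as a functor of triangulated categories, already preserves cofiber sequences, so the only thing to check is that $\sing$ preserves coproducts.

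The second step is then to prove $\sing$ preserves coproducts. By the adjunction $(\re, \sing)$, for a family $\{E_\alpha\}$ in $\shg$ and any $X \in \shr$ one has
\[
[X, \sing(\textstyle\coprod_\alpha E_\alpha)]^{\R} \cong [\re(X), \textstyle\coprod_\alpha E_\alpha]^{\G},
\]
so it suffices to show the canonical map $\coprod_\alpha \sing(E_\alpha) \to \sing(\coprod_\alpha E_\alpha)$ is an equivalence, and for that it is enough to test against a set of compact generators $X$ of $\shr$. Restricting, per Warning \ref{war:cellular}, to the cellular category, the compact generators are the spheres $S^{(n,m)}$, $n,m \in \Z$. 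So the claim reduces to: for each $(n,m)$, the natural map
\[
\textstyle\coprod_\alpha [S^{(n,m)}, \sing(E_\alpha)]^{\R} \longrightarrow [S^{(n,m)}, \sing(\coprod_\alpha E_\alpha)]^{\R}
\]
is an isomorphism. Using adjunction \eqref{eq:pistaradj} (equivalently $\re(S^{(n,m)}) = S_{\G}^{(n,m)}$ from Proposition \ref{pro:resing}), the right-hand side is $\pi^{\G}_{(n,m)}(\coprod_\alpha E_\alpha)$ and the left-hand side is $\coprod_\alpha \pi^{\G}_{(n,m)}(E_\alpha)$. Since the representation spheres $S_{\G}^{(n,m)}$ are compact in $\shg$ (the category being compactly generated by representation spheres, as recalled in Subsection \ref{sub:setup}), these bigraded homotopy groups commute with coproducts, and the map is an isomorphism.

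The main obstacle is making sure the compactness input is legitimately available on both sides: on the $\shg$ side, that the bigraded spheres $S_{\G}^{(n,m)}$ (trivial- and sign-representation spheres) are genuinely compact, which follows from $\shg$ being compactly generated by representation spheres; and on the $\shr$ side, that after passing to the cellular localization $\cell$ the objects $S^{(n,m)}$ form a set of compact generators and that "commutes with homotopy colimits" may be checked on this generating set. A secondary point is the reduction in the first step: one must phrase "homotopy colimit" as built out of coproducts and (co)fiber sequences and invoke that $\sing$, as the right adjoint in a Quillen adjunction of stable model categories, descends to an exact functor of triangulated homotopy categories, so that it commutes with cofiber sequences for free; this is standard but should be stated. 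Given these inputs the argument is short, and I expect no genuine computation.
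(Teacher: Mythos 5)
Your proof is correct and follows essentially the same route as the paper: reduce via stability to one remaining class of colimits, then verify that case on bigraded homotopy groups in the cellular category using the adjunction isomorphism \eqref{eq:pistaradj} together with compactness of the spheres $S^{(n,m)}$ on both the motivic and equivariant sides. The only (cosmetic) difference is the decomposition — you leave arbitrary coproducts to be checked after handling cofiber sequences by exactness, whereas the paper leaves filtered colimits after handling binary sums and cofibers — but the key compactness-plus-adjunction argument is identical.
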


\begin{proof}
In $\shg$, every colimit is built from binary sums, cofibers, and filtered colimits. Since $\shg$ is a stable category, binary sums and products coincide, and cofibers and fibers coincide up to a shift. Since the functor $\sing$ is a right adjoint, it commutes with these constructions. It remains to show that $\sing$ commutes with filtered colimits.

Let $X = \colim_{i \in I} X_i$. There is a canonical map $\phi : \colim_{i \in I} \sing(X_i) \rightarrow \sing(X)$. We show that this is a weak equivalence in the cellular category. To this end, we need to check that it induces an isomorphism in bigraded homotopy groups. Let $(n,m)$ be a pair of integers. Consider the following commutative diagram, in which the vertical arrows are isomorphisms:
$$\xymatrix{ \pi^{\R}_{(n,m)}(\underset{i \in I}{\colim}\  \sing(X_i)) \ar[r]^{\phi} \ar[d]^{\cong}&  \pi^{\R}_{(n,m)}(\sing(X)) \ar[d]^{\cong} \\
\underset{i \in I}{\colim}\  \pi^{\R}_{(n,m)}(\sing(X_i)) \ar[d]^{\cong} & \pi^{\G}_{(n,m)}(\underset{i \in I}{\colim}\  X_i) \ar[d]^{\cong} \\
\underset{i \in I}{\colim}\  \pi^{\G}_{(n,m)}(X_i) \ar[r]^{=}& \underset{i \in I}{\colim}\  \pi^{\G}_{(n,m)}(X_i),}$$
the vertical isomorphisms comes from the isomorphism given in equation \eqref{eq:pistaradj}, and compactness of $\S^{(n,m)}$ in both the $\G$-equivariant and the motivic setting.
\end{proof}

\begin{lemma} \label{lemma:conservative}
The functor $\sing$ is conservative.
\end{lemma}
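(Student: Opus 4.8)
The plan is to show that $\sing$ detects equivalences by showing it detects zero objects, using the conservativity of the forgetful-to-homotopy-groups functor on the cellular category $\shr$ together with the adjunction isomorphism \eqref{eq:pistaradj}. Since $\shr$ (in its cellular form) and $\shg$ are both stable, a map $f\colon E \to F$ in $\shg$ is a weak equivalence if and only if its cofiber $C$ is weakly equivalent to $0$, and $\sing(f)$ is a weak equivalence if and only if $\sing(C) \simeq \sing(\mathrm{cofib}(f)) \simeq \mathrm{cofib}(\sing f)$ is weakly contractible (here I use that $\sing$ is exact, being a right adjoint between stable categories, which was already invoked in Lemma~\ref{lemma:holim}). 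So it suffices to prove: if $E \in \shg$ and $\sing(E) \simeq 0$ in the cellular category $\shr$, then $E \simeq 0$.

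First I would observe that $\sing(E) \simeq 0$ in the cellular category means precisely that $\pi^{\R}_{(n,m)}(\sing(E)) = 0$ for all $(n,m) \in \Z^2$, by Definition~\ref{de:cellular} and Warning~\ref{war:cellular}. Next, by the adjunction isomorphism \eqref{eq:pistaradj}, $\pi^{\R}_{(n,m)}(\sing(E)) \cong \pi^{\G}_{(n,m)}(E)$ for all $(n,m)$. Hence all the bigraded homotopy groups $\pi^{\G}_{(n,m)}(E)$, indexed over the integer lattice spanned by $S^{(1,0)}$ and $S^{(1,1)}$, vanish. The remaining point — and this is where the real content lies — is to conclude that an object $E \in \shg$ with vanishing $\pi^{\G}_{(n,m)}(E)$ for all $(n,m) \in \Z^2$ is weakly equivalent to $0$. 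This is true because $\shg$ is compactly generated by the representation spheres $S^{n + m\sigma} = S^{(n+m,m)}$ (in our grading convention), as recalled in Subsection~\ref{sub:setup}: a map between $\G$-spectra is a weak equivalence iff it induces an isomorphism on $[S^{V}, -]$ for all virtual representations $V$, i.e. on all $\pi^{\G}_{(n,m)}$, and in particular an object with all these groups zero is contractible. Equivalently, one invokes that the functor $\bigoplus_{(n,m)} \pi^{\G}_{(n,m)}(-)$ is conservative on $\shg$ by the compact generation statement.

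Putting this together: $\sing(E) \simeq 0 \implies \pi^{\G}_{(n,m)}(E) = 0 \ \forall (n,m) \implies E \simeq 0$, and by the exactness reduction above this gives conservativity of $\sing$ on all maps. I expect the main (though mild) obstacle to be bookkeeping with the cellular localization: one must be careful that $\sing$ here denotes the composite of the $\sing$ of Definition~\ref{de:sing} with the cellularization functor (per Warning~\ref{war:cellular}), so that "$\sing(f)$ is a weak equivalence in $\cell$" unwinds exactly to "$\pi^{\R}_{(n,m)}(\sing f)$ is an isomorphism for all $(n,m)$" — which is what makes \eqref{eq:pistaradj} directly applicable — rather than to a statement about the finer (non-cellular) homotopy sheaves. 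Once that is pinned down, the argument is a short diagram chase identical in spirit to the one in the proof of Lemma~\ref{lemma:holim}.
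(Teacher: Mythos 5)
Your argument is correct, but it is not the route the paper takes. The paper's proof is a one-line formal argument using the section of Proposition \ref{pro:secresing}: since $c_*\sing \cong id_{\shg}$, if $\sing(f)$ is a weak equivalence then so is $f \cong c_*\sing(f)$; no homotopy groups, stability, or generation statements are needed. You instead reduce (via exactness of $\sing$, as in Lemma \ref{lemma:holim}) to detecting zero objects, transport vanishing of $\pi^{\R}_{(n,m)}(\sing E)$ to vanishing of $\pi^{\G}_{(n,m)}(E)$ through the natural adjunction isomorphism \eqref{eq:pistaradj}, and then invoke compact generation of $\shg$ by the representation spheres $S^{n+m\sign}$ (asserted in Subsection \ref{sub:setup}; note every virtual $\G$-representation is of this form, and $\G_+$ lies in the thick subcategory of representation spheres via the cofiber sequence $\G_+ \to S^0 \to S^{\sign}$, so bigraded homotopy indeed detects equivalences there). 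Your approach buys something the paper's terse proof glosses over: since weak equivalences in $\cell$ are \emph{defined} by bigraded homotopy (Definition \ref{de:cellular}, Warning \ref{war:cellular}), your unwinding makes the cellularization bookkeeping explicit and does not require knowing that the identity $c_*\sing \cong id$ survives composition with the cellular replacement; the paper's approach buys brevity and avoids any appeal to generation of $\shg$ by spheres. Both are valid proofs of the lemma.
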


\begin{proof}
Let $f: E \rightarrow F$ be a morphism of $\G$-spectra. Suppose that $\sing(f)$ is a weak equivalence. By Proposition \ref{pro:secresing}, $f = c_* \sing (f)$ is then a weak equivalence.
\end{proof}

\begin{lemma} \label{lemma:projection}
Let $E\in \shg$ and $X\in\shr$. The natural map 
 $$\sing(E) \wedge X \rightarrow \sing(E \wedge \re(X))$$
 is a weak equivalence.
\end{lemma}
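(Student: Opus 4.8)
The plan is to reduce to the case where $X$ runs over a set of compact generators, and then verify the statement directly on spheres. First I would observe that the natural transformation in question, $\sing(E)\wedge X \to \sing(E\wedge \re(X))$, is the mate of the (lax monoidal) structure map on $\sing$ coming from the fact that $\re$ is strong symmetric monoidal (Proposition \ref{pro:resing}); concretely it is adjoint to $\re(\sing(E)\wedge X)\cong \re\sing(E)\wedge\re(X)\to E\wedge\re(X)$, using the counit $\re\sing(E)\to E$. Both source and target, as functors of $X$ for fixed $E$, commute with homotopy colimits: the target because $\re$ is a left adjoint, $\re(X)\wedge(-)$ preserves colimits, and $\sing$ commutes with homotopy colimits by Lemma \ref{lemma:holim}; the source because smashing with $X$ and smashing with $\sing(E)$ both preserve homotopy colimits. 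Since $\shr$ (in its cellular form, by Warning \ref{war:cellular}) is generated under homotopy colimits by the bigraded spheres $S^{(n,m)}$, and the class of $X$ for which the map is an equivalence is closed under homotopy colimits and retracts, it suffices to check the claim for $X = S^{(n,m)}$.

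Next I would dispatch the case of spheres. For $X = S^{(n,m)}$, both sides reduce to suspensions: on the source, $\sing(E)\wedge S^{(n,m)} = \Sigma^{(n,m)}\sing(E)$; on the target, $\re(S^{(n,m)}) = S^{(n,m)}_{\G}$ by Proposition \ref{pro:resing}, so $\sing(E\wedge S^{(n,m)}_{\G}) = \sing(\Sigma^{(n,m)}_{\G} E)$. Thus the statement for spheres is exactly the assertion that $\sing$ commutes with the $(n,m)$-fold suspension, i.e. that the canonical map $\Sigma^{(n,m)}\sing(E)\to\sing(\Sigma^{(n,m)}_{\G}E)$ is a weak equivalence for all $(n,m)\in\Z^2$. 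This follows since $\sing$ is a stable (i.e. triangulated/spectral) functor: it is the right adjoint in a Quillen adjunction of stable model categories, hence commutes with the shift $\Sigma^{(1,0)}$ automatically, and it commutes with $\Sigma^{(1,1)} = \gm\wedge(-)$ because $\re(S^{(1,1)}) = S^{(1,1)}_{\G}$ is invertible and $\sing$, being a right adjoint between closed symmetric monoidal categories, commutes with smashing against the image of an invertible object. (Equivalently, one checks directly on bigraded homotopy groups using the adjunction isomorphism \eqref{eq:pistaradj}: $\pi^{\R}_{*}(\Sigma^{(n,m)}\sing(E))\cong\pi^{\R}_{*-(n,m)}(\sing(E))\cong\pi^{\G}_{*-(n,m)}(E)\cong\pi^{\G}_{*}(\Sigma^{(n,m)}_{\G}E)\cong\pi^{\R}_{*}(\sing(\Sigma^{(n,m)}_{\G}E))$, and one verifies this identification is induced by the projection-formula map.)

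The step I expect to be the main obstacle is the bookkeeping ensuring that the map whose equivalence we check on spheres is genuinely the projection-formula map of the statement — i.e. naturality and compatibility of the suspension isomorphisms with the lax-monoidal mate — rather than just knowing abstractly that source and target agree on spheres. This is where one has to be careful: the argument "both functors preserve colimits and agree on generators" only concludes what we want if the comparison is effected by an honest natural transformation, so I would phrase everything in terms of the single natural transformation above and track it through the reductions, using that a natural transformation between colimit-preserving functors which is an equivalence on a generating set is an equivalence everywhere. Granting that, the sphere case is formal from Proposition \ref{pro:resing} and stability of $\sing$, and the proof is complete.
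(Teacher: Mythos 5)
Your proposal is correct and follows essentially the same route as the paper: reduce to the bigraded spheres using that both sides of the natural transformation commute with homotopy colimits (via Lemma \ref{lemma:holim}) and that the cellular category is generated by the $S^{(n,m)}$, then settle the sphere case using the values of $\re$ on spheres from Proposition \ref{pro:resing}. The only (harmless) difference is that the paper reduces to spheres in both variables $E$ and $X$, whereas you fix $E$ and handle the sphere case for general $E$ via invertibility of $S^{(n,m)}$ and the adjunction isomorphism \eqref{eq:pistaradj}; your extra care that the comparison is effected by the single mate transformation is a point the paper leaves implicit.
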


\begin{proof}
Since we are working in the cellular category, it suffices to check the desired weak equivalence for the generators, which are $S^{(1,0)}$ and $S^{(1,1)}  \in \shg$, and $S^{(1,0)}$ and $S^{(1,1)} \in \shr$. The result is trivial in this case because of the values of $\re$ on motivic spheres.

One concludes using that both sides commutes with homotopy colimits in both variables.
\end{proof}

\begin{thm} \label{thm:qshinmot}
The adjunction
$$ \re : \shr \leftrightarrows \shg : \sing$$
factors as

$$ \xymatrix{ \shr \ar@<.05cm>[rr]^{\re} \ar@<-.05cm>[dr]_{\rs \wedge (-)} & & \shg \ar@<.05cm>[ll]^{\sing} \ar@<.05cm>[dl]^{\oversing}\\
& \rs \dashmod mod(\shr) \ar@<.05cm>[ru]^{\overre} \ar@<-.05cm>[lu]_{U} & }$$

where the adjunct pair
$$ \overre : \rs\dashmod mod(\shr) \leftrightarrows : \shg :\oversing$$
is a Quillen equivalence .
\end{thm}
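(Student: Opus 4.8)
The plan is to invoke a monadicity / Barr--Beck-type recognition theorem for the model category (or $\infty$-category) $\shg$, realizing it as the category of modules over the monad $\sing \circ \re$ acting on $\shr$, and then to identify that monad with $\rs \wedge (-)$. I would first observe that $\sing(\S) = \sing \re(\S)$ carries a natural ring structure: since $\re$ is strong symmetric monoidal (Proposition \ref{pro:resing}) and $\sing$ is its right (lax monoidal) adjoint, $\sing$ of a ring is a ring, and $\S \in \shg$ is the unit ring. More generally, by Lemma \ref{lemma:projection} the projection formula gives $\sing(\re(X)) \simeq \sing(\S) \wedge X$ naturally in $X$, so the comparison functor $\overre$ from $\shg$ to $\rs\dashmod\text{mod}(\shr)$ sends $E$ to $\sing(E)$ equipped with the $\sing(\S)$-module structure coming from the lax monoidal structure of $\sing$; its left adjoint $U \dashv$-partner, i.e. the composite $\rs \wedge (-)$ followed by $\overre$'s inverse direction, is $\oversing$, and on the underlying level $\oversing$ lifts $\re$ through the free-module functor. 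The factorization of the diagram is then a formal consequence of the projection formula together with $\re c^* \simeq \mathrm{id}$, $c_*\sing \simeq \mathrm{id}$ from Proposition \ref{pro:secresing}.

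The substance is the claim that $(\overre, \oversing)$ is a Quillen equivalence, equivalently that $\overre$ is an equivalence of homotopy categories. Here I would appeal to the dual form of the Barr--Beck--Lurie monadicity theorem (or its model-categorical avatar, e.g. Schwede--Shipley / Hovey module recognition): the right adjoint $\sing : \shg \to \shr$ exhibits $\shg$ as modules over the associated monad provided that (i) $\sing$ is conservative, and (ii) $\sing$ preserves the relevant colimits (geometric realizations of $\sing$-split simplicial objects, or in the stable setting simply all homotopy colimits). Both hypotheses are exactly what the preceding three lemmas establish: Lemma \ref{lemma:conservative} gives conservativity, and Lemma \ref{lemma:holim} gives preservation of all homotopy colimits (in a stable presentable setting this subsumes the split-geometric-realization condition). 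Applying the recognition theorem produces an equivalence between $\shg$ and modules over the monad $\sing\re$; the projection-formula identification of this monad with $\rs\wedge(-)$ (Lemma \ref{lemma:projection}, using that $\sing(\S)$ is an $E_\infty$-, or at least $A_\infty$-, ring because $\S$ is and $\sing$ is lax monoidal) then rewrites ``modules over the monad'' as ``$\rs\dashmod\text{mod}(\shr)$''. Finally I would check the statement is a genuine Quillen equivalence and not merely a derived equivalence: $\overre$ and $\oversing$ are visibly a Quillen pair (free/forgetful for a cofibrant ring object in a monoidal model category, composed with the Quillen pair $(\re,\sing)$), so a Quillen adjunction inducing an equivalence of homotopy categories is automatically a Quillen equivalence.

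The main obstacle I anticipate is the careful verification of hypothesis (ii) in the form actually needed by the monadicity theorem, together with the point-set bookkeeping that makes everything a genuine Quillen equivalence of \emph{model} categories rather than just an equivalence of underlying $\infty$-categories or homotopy categories. Concretely: one must know that $\shg$ (in the cellular/combinatorial setup fixed in Warning \ref{war:cellular}) is presentable and stable so that the colimit hypothesis reduces cleanly to preservation of homotopy colimits; one must ensure $\rs = \sing(\S)$ can be taken to be a cofibrant (or sufficiently flat) monoid so that the projectively-modeled category $\rs\dashmod\text{mod}(\shr)$ has the expected homotopy theory and the free-forgetful pair is Quillen; and one must check that the unit and counit of $(\overre,\oversing)$ are weak equivalences, which is where conservativity of $\sing$ and the projection formula are combined. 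Another potential subtlety is that $\re$ only factors through the cellular localization because it has a section (as noted in Warning \ref{war:cellular}), so one should confirm the monad $\rs\wedge(-)$ is computed in the cellular category and that cellularization does not disturb the module recognition; I would handle this by noting that $\shg$ is itself generated by the (images of the) spheres, so it is already ``cellular'' on the nose and the comparison respects the generators by Proposition \ref{pro:resing}.
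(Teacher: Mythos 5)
Your proposal is correct and follows essentially the same route as the paper: the paper's proof simply invokes the recognition theorem \cite[Proposition 5.29]{MNN15}, whose hypotheses are exactly your three inputs --- preservation of homotopy colimits by $\sing$ (Lemma \ref{lemma:holim}), conservativity (Lemma \ref{lemma:conservative}), and the projection formula (Lemma \ref{lemma:projection}) --- so your Barr--Beck-style argument is just an unwinding of that cited result.
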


\begin{proof}
We argue using \cite[Proposition 5.29]{MNN15}. The hypothesis of this proposition are Lemmas \ref{lemma:holim}, \ref{lemma:conservative}, and \ref{lemma:projection}.
\end{proof}

\begin{rem}
Theorem \ref{thm:qshinmot} is not so surprising, and there is an analogous equivalence of homotopy theories between modules over $S[\tau^{-1}]$ in $\shc$ and $\sh$ (using a similar argument to the one presented here in the real case, this is a direct consequence of Dan Isaksen's computation, see \cite{Isa09}).

However, remark \ref{rk:steenrodalgebras} is the philosophical reason why when it comes to studying the action of the Steenrod algebras on both sides of this equivalence, the comparison between $\shr$ and $\shg$ is far closer than the one between $\shc$ and $\sh$.
\end{rem}

\begin{rem}
The following alternative proof of Theorem \ref{thm:qshinmot} has been suggested by Tom Bachmann. We include the outline here for the curious reader.
Betti realization and $\sing$ induces an adjunction $$ \overre : \rs\dashmod mod(\shr) \leftrightarrows : \shg :\oversing$$
for formal reasons (see \cite[5.24]{MNN15}). 
Now, by \cite[Example 5.25]{MNN15}, the hypothesis of \cite[Lemma 21]{Bac15} are satisfied, so that the functor $\overre$ is automatically fully faithful. Now, $\overre$ has a section, namely the restriction of $c^*$ to $\rs\dashmod mod(\shr)$, so $\overre$ is essentially surjective. Consequently, $\overre$ is a Quillen equivalence.
\end{rem}

\subsection{The functor $\mot$} \label{sub:mot}

Now that we have identified $\shg$ as a category of modules in $\shr$ over the ring spectrum $\rs$, we are in a good situation to do descent for $\G$-spectra. More precisely, given a $\G$-spectrum $E$, we want to build the closest motivic spectrum to $E$. It turns out that the spectrum that suits our purposes is $c^*(E)^{\wedge}_{\sing(\S)}$. For simplicity, we will give a shorter name to this functor (see Definition \ref{de:mot}).

We first need some preliminary constructions. Essentially, we build a cosimplicial motivic spectrum from the pair of adjoints $(\re,\sing)$, and realize it.

\begin{de}
Let $\P : \shr \rightarrow \shr$ be the monad defined by $\P = \sing \re$,
$$ \eta : \S \rightarrow \sing \re$$
given by the unit of the adjoint pair $(\re,\sing)$, and
$$\mu : \P \P \rightarrow \P$$
being $\sing \epsilon_{\re}$, where $\epsilon$ is the counit of the adjoint pair $(\re,\sing)$.
\end{de}

\begin{rem}
First, observe that, for any $E \in \shg$, the motivic spectrum $\sing (E)$ has a natural $\P$-module structure, as it is $\sing \re c^* E$ by Proposition \ref{pro:secresing}.
\end{rem}

\begin{de}
Let $E\in \shg$. We define $\mot^{\bullet}(E) : \Delta^{op} \rightarrow \shr$ as the cobar construction associated to the monad $\P$, with respect to the algebra over it $\sing(E)$.

Precisely, for a spectrum $E \in \shg$, $\mot^n (E) = (\re \sing)^{n} \sing(E)$, and the faces and degeneracies are given by the unit and counit of the adjunction.
\end{de}

\begin{pro} \label{pro:contractiblecosimplicial}
Let $E \in \shg$. The coaugmented cosimplicial space $\re \mot^{\bullet}(E) \leftarrow E$ is contractible (\textit{i.e.} it is a coaugmented cosimplicial motivic spectrum  with extra degeneracies). 
\end{pro}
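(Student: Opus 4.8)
The plan is to show that the coaugmented cosimplicial object $\re\,\mot^{\bullet}(E) \leftarrow E$ admits a system of extra degeneracies, which is the standard way to witness contractibility (a coaugmented cosimplicial object with extra degeneracies becomes a contractible simplicial object after applying $\Tot$, or more precisely its coaugmentation is a homotopy equivalence). The key input is Proposition \ref{pro:secresing}, which gives $\re c^* \cong \mathrm{id}_{\shg}$. Applying $\re$ to the cobar construction $\mot^{\bullet}(E)$, whose $n$-th term is $(\re\,\sing)^n\sing(E)$, we get $\re(\re\,\sing)^n\sing(E)$. The first step is to rewrite this: since $\sing(E) = \sing c^* \circ (\text{stuff})$... more carefully, the point is that $\re\,\mot^{\bullet}(E)$ should be identified with the standard cobar resolution of $E$ built from the \emph{comonad} $\re\,\sing$ on $\shg$, coaugmented by $E \to \re\,\sing(E)$ via the unit — wait, one has to be careful about variances, so let me instead describe the extra degeneracy directly.

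First I would set up the notation: write $T = \sing\re$ for the monad $\P$ on $\shr$ and $S = \re\,\sing$ for the comonad on $\shg$. Then $\mot^n(E) = S^n\sing(E)$ and $\re\,\mot^n(E) = \re S^n \sing(E) = S^{n+1}\cdot(\text{the object } \sing(E)\text{ viewed appropriately})$ — concretely $\re\,\mot^\bullet(E)$ is the cosimplicial object $[n] \mapsto S^{n+1}(E)$ obtained from the comonad $S$ on $\shg$ applied to $E$ (using $\re\,\sing(E) = S(E)$ and that $\re$ of the cobar faces/degeneracies are exactly the comonad structure maps, via $\re c^* \cong \mathrm{id}$ to identify $\sing(E)$ with $\sing c^*(E)$ so that $\re\sing(E) = \re\sing c^*(E) = S(c^*E)$... ). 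The coaugmentation $E \to S(E)$ is the counit-dual, i.e. the natural transformation coming from $\mathrm{id} \to \re\,\sing$? No: the comparison map goes $\re\,\sing \to \mathrm{id}$ is the counit $\epsilon$ of $(\re,\sing)$. So $S = \re\,\sing$ is a comonad with counit $\epsilon: S \Rightarrow \mathrm{id}$, and the cobar/cosimplicial resolution of an object $E$ by a comonad has terms $S^{n+1}(E)$ — this does \emph{not} come with a coaugmentation to $E$ in general; rather it is the dual situation. I would therefore double-check the variance in the definition of $\mot^\bullet$: it is the \emph{cobar} construction for the monad $\P$ relative to the $\P$-algebra $\sing(E)$, so $\mot^\bullet(E)$ augments (in the cosimplicial sense, i.e.\ coaugments) over $\sing(E)$, and applying the left adjoint $\re$ the coaugmentation $\sing(E) \to \mot^\bullet(E)$ realizes to $\re\,\sing(E) \to \re\,\mot^\bullet(E)$; composing with $\epsilon_E : \re\,\sing(E) \to E$ (using $\re c^* \cong \mathrm{id}$) one should get the claimed coaugmentation $E \leftarrow \re\,\mot^\bullet(E)$ after appropriate identification.

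The concrete step I would carry out is: exhibit the extra degeneracies $s^{-1}: \re\,\mot^{n}(E) \to \re\,\mot^{n-1}(E)$ (for $n \geq 0$, with $\re\,\mot^{-1}(E) := E$) built out of the counit $\epsilon: \re\,\sing \Rightarrow \mathrm{id}$ of the adjunction $(\re, \sing)$. For a monadic cobar resolution $\P^\bullet M$ of a $\P$-algebra $M$, after applying the left adjoint $\re$ one has $\re\P^n M = (\re\sing)^n \re M$, and the natural transformations $\epsilon \re : (\re\sing)\re \Rightarrow \re$, applied at the leftmost spot, provide exactly the contracting homotopy — this is the classical ``free resolutions are contractible after forgetting'' / ``bar resolution is simplicially contractible'' argument, dualized. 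I would then verify the cosimplicial identities relating $s^{-1}$ to the coface and codegeneracy maps $d^i, s^i$ of $\re\,\mot^\bullet(E)$; these follow formally from the triangle identities of the adjunction $(\re,\sing)$ and the identification $\re c^* \cong \mathrm{id}$. Once the extra degeneracies are checked, contractibility is a formal consequence.

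The main obstacle I anticipate is bookkeeping rather than conceptual: getting the variances straight between the \emph{monad} $\P = \sing\re$ on $\shr$, its cobar construction, and what happens when the left adjoint $\re$ is applied — in particular making sure the coaugmentation really is $E$ (rather than $\re\sing(E)$ or $\sing(E)$) and that the would-be extra degeneracy in top dimension lands in $E$, which is where $\re c^* \cong \mathrm{id}_{\shg}$ (Proposition \ref{pro:secresing}) and $c^*\sing$ being a $\P$-algebra structure get used. A secondary, purely technical point is that ``contractible'' for a coaugmented cosimplicial spectrum should be spelled out (extra codegeneracies $\Rightarrow$ the coaugmentation is a weak equivalence after totalization, or equivalently the cosimplicial object is homotopy constant on $E$), but given the machinery of \cite{Mah81, DM84, DM86} referenced in the paper this is standard and I would just cite it.
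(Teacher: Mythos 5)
Your proposal is correct and is essentially the paper's own argument: the paper simply cites the classical fact that the extra degeneracies of $\re\,\mot^{\bullet}(E)$ come from the $\re\,\sing$-algebra structure on $E$ (i.e.\ the counit of the adjunction $(\re,\sing)$, together with $\re c^* \cong \mathrm{id}$), which is exactly the contracting homotopy you construct and whose cosimplicial identities you verify via the triangle identities.
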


\begin{proof}
This is classical, the extra degeneracies comes from the $\re \sing$-algebra structure on $E$ (see \cite[Remark 4.5.3]{R14}).
\end{proof}

We are now ready to define the motivic spectrum $\mot(E)$.

\begin{de}\label{de:mot}
Let
$$\mot : \shg \rightarrow \shr$$
be the totalization of $\mot^{\bullet}$.
\end{de}

Explicitly, this object is
$$Tot \left( \xymatrix{ \hdots  \ar@<.4cm>[r] \ar[r] \ar@<-.4cm>[r]  & \sing \re \sing \re \sing E \ar@<.2cm>[r] \ar@<-.2cm>[r]  \ar@<.2cm>[l]  \ar@<-.2cm>[l] \ar@<.6cm>[l]  \ar@<-.6cm>[l]& \sing \re \sing E   \ar@<.4cm>[l] \ar[l] \ar@<-.4cm>[l] \ar[r]& \sing E \ar@<.2cm>[l] \ar@<-.2cm>[l] } \right).$$

\begin{rem} \label{rk:classicalrewriting}
The spectrum $\mot^{\bullet}(E)$ is the most natural candidate to build a motivic spectrum which is close to $E$. Note however the two following points: \begin{enumerate}
\item this construction seems to be relevant only in $\shr$, and useless in $\shc$. Indeed, the unit of the corresponding adjunction 
$$ \re^{\C} : \sh \leftrightarrows \shc : \sing^{\C}$$
over $\spec(\C)$ is a weak equivalence (this can be shown using the formal properties of the adjunction $(\re,\sing)$ over $\C$ in \cite{HO14}, and \cite[Proposition 3.0.2]{I14} implies that this unit coincides with tensoring with the unit of the ring spectrum $\S[\tau^{- 1}]$, which is idempotent). As a result, the analogous complex construction, say $\mot_{\C}^{\bullet}(E) =  (\re \sing)^{\bullet} \sing E$, is equivalent to $\sing E$ in degree zero.
\item the cosimplicial space $\re \mot^{\bullet}(E)$ is contractible by Proposition \ref{pro:contractiblecosimplicial}. However, since $\re$ does not commutes with homotopy limits, $\re \mot(E)$ tries to be $E$, but is not in general. We will see however that is is the case in some  particular situations.
\end{enumerate}
\end{rem}

We end this section by an easy result that ensures that our functor $\mot$ preserves ring structures.

\begin{pro} \label{pro:motandrings}
Let $E$ be a $\G$-equivariant $A_{\infty}$-ring spectrum (respectively $E_{\infty}$ ring spectrum). Then $\mot(E)$ is am $A_{\infty}$-ring spectrum (respectively $E_{\infty}$ ring spectrum).
\end{pro}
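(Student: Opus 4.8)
The plan is to exhibit $\mot(E)$ as a limit of a diagram of $A_\infty$- (resp.\ $E_\infty$-) ring spectra in $\shr$, using that totalizations of cosimplicial objects in a presentably symmetric monoidal category preserve algebra structures, and that each term $\mot^n(E) = (\re\sing)^n\sing(E)$ is itself a ring spectrum compatibly with the cosimplicial structure maps. First I would recall that the adjunction $\re \colon \shr \leftrightarrows \shg \colon \sing$ has $\re$ strong symmetric monoidal (Proposition \ref{pro:resing}); consequently $\sing$ is lax symmetric monoidal, hence carries $A_\infty$- (resp.\ $E_\infty$-) ring spectra to $A_\infty$- (resp.\ $E_\infty$-) ring spectra, and carries the $\P = \sing\re$-algebra structure maps to maps of ring spectra. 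In particular, if $E$ is an $A_\infty$-ring in $\shg$, then so is $c^*E$ in $\shr$ (since $c^*$ is the strong monoidal left adjoint of Proposition \ref{pro:secresing}, and its right adjoint $c_*$ is lax monoidal; more directly, $c^*$ is strong monoidal, so it preserves ring structures), and thus $\sing(E) = \sing\re c^*(E)$ together with all the iterates $(\re\sing)^n\sing(E)$ are naturally $A_\infty$-rings.

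Next I would upgrade the cosimplicial object $\mot^\bullet(E) \colon \Delta^{op} \to \shr$ of Definition \ref{de:mot} to a cosimplicial object in the category of $A_\infty$- (resp.\ $E_\infty$-) ring spectra in $\shr$. The cobar construction on a monad $\P$ acting on an algebra is functorial, and all of its coface/codegeneracy maps are built from the unit $\eta\colon \S \to \sing\re$ and counit $\epsilon\colon \re\sing\to \mathrm{id}$ of the adjunction; since $\re$ is strong monoidal and $\sing$ lax monoidal, both $\eta$ (the unit of a monad built from a lax monoidal functor composed with a strong monoidal one) and $\sing\epsilon_{\re}$ are maps of ring spectra, so $\mu\colon \P\P \to \P$ is monoidal. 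Hence every coface and codegeneracy of $\mot^\bullet(E)$ is a map of $A_\infty$- (resp.\ $E_\infty$-) rings, i.e.\ $\mot^\bullet(E)$ lifts to a cosimplicial $A_\infty$- (resp.\ $E_\infty$-) ring spectrum. Then I would invoke that the forgetful functor from $A_\infty$- (resp.\ $E_\infty$-) rings in $\shr$ to $\shr$ creates limits, and in particular totalizations; since $\mot(E) = \mathrm{Tot}\,\mot^\bullet(E)$ by Definition \ref{de:mot}, the totalization computed in ring spectra maps to the totalization in $\shr$, and the latter is $\mot(E)$. Therefore $\mot(E)$ inherits an $A_\infty$- (resp.\ $E_\infty$-) ring structure.

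The main obstacle is bookkeeping the monoidality at the level of the model category rather than just the homotopy category: one must ensure $\mot^\bullet(E)$ is genuinely a cosimplicial diagram of (strictly or coherently) commutative/associative ring spectra and that the $\mathrm{Tot}$ in Definition \ref{de:mot} is a homotopy limit which is modelled compatibly on both sides. Concretely this means choosing a symmetric monoidal model structure on $\shr$ for which $\re$ is a strong monoidal left Quillen functor (available by Proposition \ref{pro:resing}) and working with the associated model category of $E_\infty$-algebras, whose forgetful functor to $\shr$ is right Quillen and hence preserves the fibrant-replacement-corrected totalization. Once this framework is fixed, the argument is formal: lax monoidality of $\sing$ propagates through the cobar construction, and limit-creation gives the conclusion. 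I would present the proof in roughly this order: (i) lax monoidality of $\sing$ and strong monoidality of $\re,\,c^*$; (ii) $\sing(E)$ and its iterates are rings, $\P$ is a monad of ring endofunctors; (iii) $\mot^\bullet(E)$ lifts to cosimplicial rings; (iv) $\mathrm{Tot}$ of rings forgets to $\mathrm{Tot} = \mot(E)$ in $\shr$, giving the ring structure.
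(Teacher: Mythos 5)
Your proposal is correct and follows essentially the same route as the paper, whose proof is exactly the observation that limits in (associative, resp.\ commutative) ring spectra are computed in the underlying category, so the totalization defining $\mot(E)$ can be taken in the category of rings. You additionally spell out the step the paper leaves implicit, namely that lax monoidality of $\sing$ (from strong monoidality of $\re$) lifts $\mot^{\bullet}(E)$ to a cosimplicial diagram of ring spectra, which is a worthwhile clarification but not a different argument.
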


\begin{proof}
The limits in the categories of associative (respectively commutative) ring spectra are computed in the underlying category. Therefore, it suffices to interpret the totalization as being a limit in the appropriate category.
\end{proof}

\subsection{Motivic homology and the monad $\re\sing$} \label{sub:hfrandresing}

Our next objective is to compute the cohomology of $\mot E$ in terms of $\hmf_{*}(E)$. In the general case, the strategy is to study the Bousfield-Kan spectral sequence in motivic cohomology associated to the totalization $\mot(E)$. In some particular cases, we will see that this spectral sequence collapses at the second page, and gives a simpler computation of $\hfr_{*}(\mot(E))$. The main result is given in Theorem \ref{thm:cohmote}.

Observe that $\hmf_{*}$ is an $(\hfr)_{*}$-module.

\begin{lemma} \label{lemma:hfrofsinge}
Let $E$ be a $\G$-spectrum. There is a natural isomorphism of $(\hfr)_{*}$-modules
$$(\hfr)_{*}(\sing E) \cong \hmf_{*}(E).$$
\end{lemma}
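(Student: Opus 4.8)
The plan is to exploit the fact that $\hfr$ is itself the Betti-realization source of $\hmf$, so that $\hfr$-homology of $\sing(E)$ can be rewritten $\G$-equivariantly. First I would recall that $\hfr$-homology of a motivic spectrum $X$ is, by definition, $\pi_*^{\R}(\hfr \wedge X)$ (with the appropriate bigrading), and apply this with $X = \sing(E)$. The point is to move the smash product inside $\sing$. Using the projection formula (Lemma \ref{lemma:projection}) with the motivic spectrum $X = \hfr$, there is a natural weak equivalence
$$\sing(E) \wedge \hfr \;\simeq\; \sing\bigl(E \wedge \re(\hfr)\bigr),$$
and by Proposition \ref{pro:rehfr} we have $\re(\hfr) \simeq \hmf$, so the right-hand side is $\sing(E \wedge \hmf)$. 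Hence $\hfr \wedge \sing(E) \simeq \sing(\hmf \wedge E)$ naturally in $E$.

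Next I would pass to homotopy groups. By the adjunction isomorphism \eqref{eq:pistaradj}, $\pi_*^{\R}(\sing(Y)) \cong \pi_*^{\G}(Y)$ naturally in $Y \in \shg$, applied to $Y = \hmf \wedge E$. Combining the two displayed identifications gives
$$\hfr_*(\sing E) = \pi_*^{\R}\bigl(\hfr \wedge \sing E\bigr) \;\cong\; \pi_*^{\R}\bigl(\sing(\hmf \wedge E)\bigr) \;\cong\; \pi_*^{\G}(\hmf \wedge E) = \hmf_*(E),$$
which is the asserted isomorphism; naturality in $E$ is clear since every step is natural. It remains to check the isomorphism respects the $\hfr_*$-module structures. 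On the left, $\hfr_*(\sing E)$ is a module over $\hfr_* = \hfr_*(S^{0,0})$ via the ring structure of $\hfr$; on the right, $\hmf_*(E)$ is an $\hfr_*$-module through the ring map $\hfr_* \to \hmf_*$ induced by the unit $S^{0,0} \to \sing(\S) \to \sing(\hmf) = \sing(\re \hfr)$, equivalently through $\re$ applied to the $\hfr$-action. Since the projection-formula equivalence and the adjunction isomorphism are both compatible with the ring structure of $\hfr$ (the former because $\hfr$ is the smash factor carried along, the latter by naturality of the adjunction counit/unit), the composite isomorphism is $\hfr_*$-linear.

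The only genuinely delicate point is verifying that the projection-formula equivalence of Lemma \ref{lemma:projection}, which was proved by checking it on the cellular generators $S^{(1,0)}, S^{(1,1)}$ and extending by colimits, is natural enough and multiplicative enough to be applied with $X = \hfr$ and to carry the module structure; one should note that $\hfr$ is a cellular (indeed a filtered homotopy colimit of finite cellular) motivic spectrum, so Lemma \ref{lemma:projection} does apply, and that the naturality of the equivalence in the variable $X$ together with the ring multiplication $\hfr \wedge \hfr \to \hfr$ yields the compatibility with the $\hfr_*$-action. Everything else is a formal chain of natural isomorphisms.
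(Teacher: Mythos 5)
Your argument is correct and is essentially the paper's own proof: unwind the definition of $\hfr$-homology, apply the projection formula of Lemma \ref{lemma:projection} with $X=\hfr$, identify $\re(\hfr)$ with $\hmf$ via Proposition \ref{pro:rehfr}, and pass to homotopy groups using the $(\re,\sing)$ adjunction on spheres as in \eqref{eq:pistaradj}. The extra remarks on cellularity of $\hfr$ and on $\hfr_*$-linearity are a welcome precision but do not change the route.
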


\begin{proof}
First, recall that by definition 
$$(\hfr)_{*}(\sing E)  =  [S^{*}, \sing E \wedge \hfr]_{\R}.$$
Now, by the projection formula (Lemma \ref{lemma:projection}),
$$ [S^{*}, \sing (E) \wedge \hfr]_{\R} = [S^{*}, \sing (E \wedge \re \hfr)]_{\R},$$
which is $[S^{*}, \sing (E \wedge \hmf)]_{\R}$
by Proposition \ref{pro:rehfr}.

Using the adjoint pair $(\re, \sing)$, we have 
$$[S^{*}, \sing (E \wedge \hmf)]_{\R} = [S^{*}, E \wedge \hmf]_{\G},$$ as $\re(S^*) = S^*$.
The result follows.
\end{proof}

We do not only need to compute the cohomology of $\sing E$ in terms of the cohomology of $E$, but we need to do the same for all the stages of the cosimplicial spectrum $\mot^{\bullet}E$. This is where we take advantage of the description of the adjoint pair $(\re,\sing)$ provided by Theorem \ref{thm:qshinmot}.

\begin{lemma} \label{lemma:singreasrswedge}
Let $E \in \shg$. There is a natural weak equivalence
$$ (\sing \re)^n \sing E \cong \rs^n \wedge \oversing(E).$$
Moreover, the faces and degeneraces of $\mot^{\bullet}(E)$ are induced by the unit and multiplication of $\rs$, and the $\rs$-module structure of $\oversing(E)$ under this identification.
\end{lemma}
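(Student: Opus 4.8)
The plan is to reduce everything to the identification of $\shg$ with $\rs\dashmod mod(\shr)$ furnished by Theorem~\ref{thm:qshinmot}, under which $\sing$ becomes the forgetful functor $U$ and $\re$ becomes $\rs\wedge(-)$. Under this dictionary, $\oversing(E)$ is just $E$ regarded as an $\rs$-module, and its underlying motivic spectrum is $\sing(E)$, while $\overre$ sends an $\rs$-module $M$ to $\rs\wedge_{\rs}\dots$—but more to the point, the composite $\sing\re = U\circ(\rs\wedge(-))\circ U$ applied to $\sing E = U\oversing(E)$ is, as a functor on plain motivic spectra, $\rs\wedge(-)$ followed by forgetting. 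So $\sing\re\sing E \simeq \rs\wedge\sing E \simeq \rs\wedge \oversing(E)$ as motivic spectra (the last $\wedge$ being the smash in $\shr$, which computes the free $\rs$-module). Iterating, $(\sing\re)^n\sing E\simeq \rs^{\wedge n}\wedge\oversing(E)$. First I would make this precise: write $F=\rs\wedge(-):\shr\to\rs\dashmod mod(\shr)$ for the free-module functor and $U$ for the forgetful functor, recall from Theorem~\ref{thm:qshinmot} the natural equivalences $\overre\circ F\simeq \re$ and $U\circ\oversing\simeq\sing$ (equivalently $\sing\simeq U\,\oversing$), and $\overre\oversing\simeq \mathrm{id}$, so that $\sing\re\simeq U\,\oversing\,\overre\,F \simeq U F$ as endofunctors of $\shr$. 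Hence $(\sing\re)^n\sing E \simeq (UF)^n U\oversing(E)$, and since $UF(-) = U(\rs\wedge(-)) = \rs\wedge(-)$ and $(UF)^nU = U(FU)^n$ where $FU$ is the free-forget comonad on $\rs$-modules whose $n$-fold iterate on an $\rs$-module $M$ has underlying spectrum $\rs^{\wedge n}\wedge U(M)$, we get the claimed natural weak equivalence $(\sing\re)^n\sing E\simeq \rs^{\wedge n}\wedge\oversing(E)$, all smashes taken in $\shr$.

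The second sentence of the lemma—that the cosimplicial structure maps of $\mot^\bullet(E)$ correspond, under this identification, to the standard ones built from the unit $\S\to\rs$, the multiplication $\rs\wedge\rs\to\rs$, and the $\rs$-action $\rs\wedge\oversing(E)\to\oversing(E)$—I would handle by tracking the definitions through the equivalence. The cobar/cosimplicial object $\mot^\bullet(E)$ attached to the monad $\P=\sing\re$ with $\P$-algebra $\sing(E)$ has cofaces given by inserting the unit $\eta:\mathrm{id}\to\sing\re$ and the counit $\epsilon:\re\sing\to\mathrm{id}$ (i.e. the monad multiplication $\mu=\sing\epsilon_\re$ and the algebra structure $\sing\re\sing E\to\sing E$), and codegeneracies by $\mu$. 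Under $\sing\re\simeq UF$, the unit of the monad $UF$ is the unit of the free-forgetful adjunction, which is $\S\xrightarrow{\eta_{\rs}}\rs$ smashed with the identity, and the monad multiplication is $\rs\wedge\rs\to\rs$ smashed with the identity; the $\P$-algebra structure on $\sing(E)=U\oversing(E)$ is transported to the $UF$-algebra structure on $U\oversing(E)$, which (since $UF$-algebras are $\rs$-modules and $\oversing(E)$ is one) is exactly the action map $\rs\wedge\oversing(E)\to\oversing(E)$. This is really a formal statement about comparing the canonical cosimplicial resolution coming from a monadic adjunction with the bar resolution of a module over the associated monad, so I would state it as such and cite the compatibility of the equivalence of Theorem~\ref{thm:qshinmot} with units and counits (which is built into being a Quillen equivalence intertwining the two adjunctions).

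The main obstacle I anticipate is not the underlying-spectrum identification, which is essentially a bookkeeping exercise, but making the naturality and the compatibility-of-structure-maps claim rigorous rather than hand-wavy: one must check that the equivalences $\sing\re\simeq UF$ assemble into an equivalence of (co)monads, not merely a levelwise equivalence, so that the induced cosimplicial objects are equivalent \emph{with} their face and degeneracy maps. Concretely, the subtlety is that Theorem~\ref{thm:qshinmot} gives a Quillen equivalence $(\overre,\oversing)$ together with factorizations $\re\simeq\overre F$, $\sing\simeq U\oversing$; one needs these factorizations to be compatible with the triangle identities so that the monad map $\sing\re\to UF$ respects unit and multiplication. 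I would dispatch this by invoking that $(\overre,\oversing)$ is an adjoint equivalence and that the two original adjunctions factor through it, so the monad $\sing\re = U\oversing\,\overre F$ is isomorphic \emph{as a monad} to $UF$ via the counit $\overre\oversing\Rightarrow\mathrm{id}$ (an iso), which is the standard fact that factoring an adjunction through an equivalence does not change the associated monad. Granting that, the cosimplicial identification and the description of its structure maps follow formally, and the rest is the clean statement in the lemma.
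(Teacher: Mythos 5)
Your proposal is correct and follows essentially the same route as the paper, whose entire proof is to cite Theorem \ref{thm:qshinmot}: you simply unpack that citation, identifying $\sing\re$ with the free--forget monad $UF$ on $\rs$-modules and noting that factoring the adjunction $(\re,\sing)$ through the Quillen equivalence $(\overre,\oversing)$ does not change the associated monad, so the cobar constructions agree together with their faces and degeneracies. (One cosmetic remark: the isomorphism $\sing\re = U\oversing\,\overre F \cong UF$ uses the derived \emph{unit} $\mathrm{id}\Rightarrow\oversing\,\overre$ being an equivalence rather than the counit, but since $(\overre,\oversing)$ is an adjoint equivalence both are invertible and your argument stands.)
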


\begin{proof}
This is a direct consequence of Theorem \ref{thm:qshinmot}.
\end{proof}

In particular, we would like to use the K\"unneth spectral sequence to compute the homology of $ (\sing \re)^n \sing E $. Hopefully, we have the following result which almost gives a K\"unneth isomorphism.

Recall the notation $\M$ and $D\M$ adopted in \ref{nota:manddm}.

\begin{lemma} \label{lemma:torcomputation}
For all $i \geq 0$, $\tor_i^{\M}(D\M,D\M) = 0$.
In particular,
$$\tor_i^{(\hfr)_{*}}(\hmf_{*},\hmf_{*}) = 0$$
for all $i>0$.
\end{lemma}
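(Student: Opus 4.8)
The plan is to reduce the statement about $\tor_i^{(\hfr)_*}(\hmf_*,\hmf_*)$ to the purely algebraic claim $\tor_i^{\M}(D\M,D\M) = 0$ for all $i \geq 0$, and to prove the latter by exhibiting an explicit free resolution of $D\M$ over $\M = \F[\rho,\tau]$. First I would explain the reduction: by Proposition \ref{pro:coeffring} and Notation \ref{nota:manddm} we have $\hmf_* = \M \oplus \nor D\M$ as an $\M$-module, so by additivity of $\tor$ in each variable,
$$\tor_i^{\M}(\hmf_*,\hmf_*) \cong \tor_i^{\M}(\M,\M) \oplus \tor_i^{\M}(\M,D\M) \oplus \tor_i^{\M}(D\M,\M) \oplus \tor_i^{\M}(D\M,D\M).$$
The first three summands vanish for $i>0$ because $\M$ is flat (free of rank one) over itself, so the vanishing of $\tor_i^{(\hfr)_*}(\hmf_*,\hmf_*)$ for $i>0$ follows once we know $\tor_i^{\M}(D\M,D\M)=0$ for $i>0$; the stronger statement $\tor_i^{\M}(D\M,D\M)=0$ for all $i\geq 0$ (including $i=0$) is what actually needs work.

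The heart of the matter is understanding $D\M$ as an $\M$-module. Here $\M = \F[\rho,\tau]$ is a polynomial ring on two generators, and $D\M$ is its graded $\F$-linear dual, with the $\M$-module structure dual to multiplication. Concretely, $D\M$ has $\F$-basis the dual monomials $(\rho^a\tau^b)^\vee$, and $\rho\cdot(\rho^a\tau^b)^\vee = (\rho^{a-1}\tau^b)^\vee$ (interpreted as $0$ when $a=0$), and similarly for $\tau$; this is the "divided-power"/"down" module. The key structural observation is that $D\M$ is the colimit (filtered union) of the finite-dimensional submodules $D\M_{\leq N} = \mathrm{span}\{(\rho^a\tau^b)^\vee : a,b \leq N\}$, but more usefully, $D\M \cong \colim_N \Sigma^{(-N,-N)\cdot\text{(appropriate shift)}} \M/(\rho^{N+1},\tau^{N+1})^\vee$-type pieces — the cleanest route is to recognize $D\M$ as the injective hull of $\F$ over $\M$ (localized/completed appropriately), or directly as $\M[\rho^{-1},\tau^{-1}]/(\rho,\tau)$-adic stuff. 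Rather than chase that, I would instead use the Koszul resolution: since $\rho,\tau$ is a regular sequence in $\M$, the Koszul complex $K_\bullet$ on $(\rho,\tau)$ is a free resolution of $\F = \M/(\rho,\tau)$, and one computes $\tor_*^{\M}(\F, D\M)$ by tensoring $K_\bullet$ with $D\M$; since on $D\M$ both $\rho$ and $\tau$ act \emph{surjectively} with the socle killed, the multiplication maps are surjective, and one checks the Koszul differential is exact, giving $\tor_i^{\M}(\F,D\M)=0$ for all $i$, including $i=0$. Then a limit/colimit argument, using that $\tor$ commutes with the filtered colimit defining $D\M$ (or equivalently writing the first argument $D\M$ as such a colimit and using that $\tor_*^{\M}(-,D\M)$ is computed on a resolution built from copies of $\M$, where $\M\otimes_\M D\M = D\M$ and the relevant maps are again the surjective $\rho$- and $\tau$-multiplications), upgrades this to $\tor_i^{\M}(D\M,D\M)=0$ for all $i\geq 0$.

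The main obstacle I anticipate is making the last step honest: $\tor_i^{\M}(D\M,D\M)$ is not a priori the same as $\tor_i^{\M}(\F,D\M)$, and $D\M$ is not finitely generated, so one must be careful about whether $\tor$ commutes with the colimit that presents $D\M$ — it does in the first variable (homology commutes with filtered colimits), so I would present $D\M$ as $\colim_n M_n$ with each $M_n$ a finitely generated $\M$-module fitting in an exact sequence built from copies of $\M/(\rho^{k},\tau^{l})$, compute $\tor^{\M}_*(\M/(\rho^k,\tau^l), D\M)$ via the length-two Koszul-type resolution $0\to\M\xrightarrow{\rho^k}\M\to\M/\rho^k\to 0$ tensored appropriately (using that $\rho,\tau$ act surjectively on $D\M$, so $\rho^k,\tau^l$ do too, and are injective on $D\M$ — the latter because the dual of a surjection is injective), and conclude the $\tor$ groups vanish for each $M_n$, hence in the colimit. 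One bookkeeping subtlety is the bigrading (degrees of $\rho,\tau,\nor$), but it does not affect the vanishing, only the internal degrees, so I would suppress it.
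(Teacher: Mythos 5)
Your reduction from $\hmf_{*}$ to $D\M$ (splitting off the free summand $\M$ and using additivity of $\tor$) is fine and is the same bookkeeping the paper does implicitly; the problems are in the core algebraic steps. First, tensoring the Koszul complex on $(\rho,\tau)$ with $D\M$ does \emph{not} give an exact complex: surjectivity of $\rho$ and $\tau$ on $D\M$ only kills the homology in degree $0$, while the top homology is $\{x\in D\M:\rho x=\tau x=0\}$, the socle of $D\M$, which is one-dimensional (spanned by the functional dual to $1\in\M$). So $\tor_2^{\M}(\F,D\M)\cong\F\neq0$, contrary to your claim. Second, your injectivity assertion is backwards: $\rho\colon\M\to\M$ is injective, not surjective, so its dual $\rho\colon D\M\to D\M$ is surjective with a large kernel (all functionals dual to the monomials $\tau^{b}$); hence $\rho^{k},\tau^{l}$ are not injective on $D\M$, and $\tor_2^{\M}(\M/(\rho^{k},\tau^{l}),D\M)$ is the (nonzero, finite-dimensional) span of the duals of the monomials $\rho^{a}\tau^{b}$ with $a<k$, $b<l$. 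Because of this, the filtered-colimit upgrade cannot produce vanishing in all homological degrees: if you carry out your own scheme correctly (equivalently, tensor the flat resolution $0\to\M\to\M[\rho^{-1}]\oplus\M[\tau^{-1}]\to\M[\rho^{-1},\tau^{-1}]\to D\M\to0$ with $D\M$), the contribution in homological degree $2$ survives, while degrees $0$ and $1$ do vanish. So the proposal, as written, cannot be completed.

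For comparison, the paper's argument never touches $\tor^{\M}(\F,D\M)$ or a Koszul complex: it uses the presentation $K\inj\F[\rho^{\pm1},\tau^{\pm1}]\surj D\M$ (with $K$ the span of Laurent monomials having at least one nonnegative exponent) together with the observation that every element of $D\M$ is killed by a power of $\rho$ and by a power of $\tau$, whereas the modules in the presentation consist of elements infinitely divisible by $\rho$ or by $\tau$; this makes all the relevant tensor products with $D\M$ vanish, and in particular gives $D\M\otimes_{\M}D\M=0$ and $\tor_1^{\M}(D\M,D\M)=0$ directly. The assertion in degrees $\geq 2$ rests on the flatness of that two-term resolution; your (corrected) degree-$2$ computation above is exactly the point one must confront there, so I would encourage you to scrutinize that step rather than to route the proof through $\tor^{\M}(\F,D\M)$, which genuinely does not vanish.
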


\begin{proof}
We use an explicit flat resolution of $D\M$ of size $2$:
$$ K \inj \F[\rho^{\pm 1}, \tau^{\pm 1}] \surj D\M.$$

Observe that every element of $D\M$ is both $\rho$-torsion and $\tau$-torsion. Thus, for any $\M$-module $N$ whose elements are either $\rho$-divisible or $\tau$-divisible, $N\otimes_{\M} D\M = 0$. The three modules $K$, $\F[\rho^{\pm 1}, \tau^{\pm 1}]$, and $D\M$ satisfies this property. The result follows.
\end{proof}

\begin{cor}
Let $E\in \shg$ having free $\hmf$-cohomology. Then
$$\hfr_{*}((\rs)^{n} \wedge \sing(E)) \cong \left( \hmf_{*} \right)^{\otimes_{\hfr_{*}}n} \otimes_{\hmf_{*}} \hmf_{*}(E).$$
\end{cor}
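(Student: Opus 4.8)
The plan is to compute $\hfr_*((\rs)^n \wedge \sing(E))$ by combining the identification of $(\sing\re)^n\sing E$ from Lemma \ref{lemma:singreasrswedge}, a K\"unneth-type spectral sequence in motivic homology, and the vanishing of higher $\tor$ from Lemma \ref{lemma:torcomputation}. First I would rewrite the spectrum. By Lemma \ref{lemma:singreasrswedge} we have $(\rs)^n\wedge\sing(E) \simeq (\sing\re)^n\sing E$, but more to the point the iterated smash $(\rs)^n\wedge\sing(E)$ is a smash product over $\S\in\shr$ of the ring spectrum $\rs = \sing(\S)$ with itself $n$ times and then with $\sing(E)$. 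Smashing with $\hfr$ and using Lemma \ref{lemma:hfrofsinge} (applied with $E = \S$, giving $\hfr_*(\rs) \cong \hmf_*$, and with $E$ itself, giving $\hfr_*(\sing E) \cong \hmf_*(E)$, the latter free over $\hmf_*$ by hypothesis), the natural target of the computation is the iterated tensor product $\left(\hmf_*\right)^{\otimes_{\hfr_*} n}\otimes_{\hmf_*}\hmf_*(E)$.

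The main tool is the motivic homology K\"unneth spectral sequence for a smash product over $\S$: for motivic spectra $A$ and $B$, there is a spectral sequence
$$E^2_{*,*} = \tor^{\hfr_*}_{*}\bigl(\hfr_*(A),\hfr_*(B)\bigr) \Longrightarrow \hfr_*(A\wedge B).$$
I would apply this inductively. For the base of the induction, $A = \rs$ and $B = \sing(E)$: then $\hfr_*(A)\cong\hmf_*$ and $\hfr_*(B)\cong\hmf_*(E)$ is free over $\hmf_*$, hence flat over $\hfr_*$ is not automatic, but the relevant $\tor$ groups are controlled by Lemma \ref{lemma:torcomputation}. Indeed, $\hmf_* = \M\oplus\nor D\M$ as an $\hfr_* = \M$-module, so $\tor^{\M}_i(\hmf_*, N) = \tor^\M_i(\M,N)\oplus\tor^\M_i(\nor D\M, N) = \tor^\M_i(\nor D\M, N)$ for $i>0$; applying this with $N = \hmf_* = \M\oplus\nor D\M$ and $N=\hmf_*(E)$ (a sum of shifted copies of $\M\oplus\nor D\M$ since it is free over $\hmf_*$), the computation reduces to $\tor^\M_i(D\M, \M) = 0$ (clear, $\M$ is free) and $\tor^\M_i(D\M, D\M) = 0$ for all $i\geq 0$, which is exactly Lemma \ref{lemma:torcomputation}. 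So all higher $\tor$ terms vanish, the spectral sequence collapses, and $\hfr_*(\rs\wedge\sing E)\cong \hmf_*\otimes_{\hfr_*}\hmf_*(E)$.

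For the inductive step, I would show that $\hfr_*((\rs)^{k}\wedge\sing E) \cong \left(\hmf_*\right)^{\otimes_{\hfr_*}k}\otimes_{\hmf_*}\hmf_*(E)$ is again a direct sum of (bidegree-shifted) copies of $\M\oplus\nor D\M$ as an $\hfr_*$-module — this follows because tensoring $\hmf_* = \M\oplus\nor D\M$ with $D\M$ over $\M$ kills the $\M$-summand (as $D\M\otimes_\M\M = D\M$ and $D\M\otimes_\M D\M = 0$ by the proof of Lemma \ref{lemma:torcomputation}), so each extra factor of $\hmf_*$ either preserves or annihilates summands but never produces anything outside the class of sums of shifts of $\M$ and $\nor D\M$; in particular the module stays in the class to which Lemma \ref{lemma:torcomputation} applies. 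Then the K\"unneth spectral sequence for $(\rs)^{k+1}\wedge\sing E = \rs\wedge\bigl((\rs)^k\wedge\sing E\bigr)$ has $E^2_{i,*} = \tor^{\hfr_*}_i(\hmf_*,\ \hfr_*((\rs)^k\wedge\sing E))$, which vanishes for $i>0$ by the same argument, so it collapses to give the claimed iterated tensor product, and naturality of the K\"unneth spectral sequence gives naturality of the isomorphism. The main obstacle I anticipate is the bookkeeping in the inductive step: one must verify carefully that $\hfr_*((\rs)^k\wedge\sing E)$ genuinely lies in the class of $\hfr_*$-modules for which Lemma \ref{lemma:torcomputation} forces $\tor$-vanishing (i.e. every element is $\rho$-divisible or $\tau$-divisible, equivalently it is a sum of shifts of $\M$ and $D\M$), rather than just quoting the lemma for a single pair; once that invariant is set up, the collapse and the identification of the associated graded with the iterated tensor product (no extension problems, since the spectral sequence is concentrated in a single column) is routine.
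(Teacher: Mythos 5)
Your proof is correct and follows essentially the same route as the paper: identify the spectrum via Lemma \ref{lemma:singreasrswedge}, run the motivic K\"unneth spectral sequence, and collapse it using the $\tor$-vanishing of Lemma \ref{lemma:torcomputation} together with the freeness hypothesis and Lemma \ref{lemma:hfrofsinge}. The only difference is that you spell out the induction and the invariant (all intermediate modules are sums of shifts of $\M$ and $\nor D\M$) that the paper leaves implicit, which is a welcome amount of extra care rather than a new idea.
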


\begin{proof}
By Lemma \ref{lemma:singreasrswedge}, there is a K\"unneth spectral sequence computing the desired cohomology. It collapses because of Lemma \ref{lemma:torcomputation} under the assumption that $\hmf_{*}(E)$ is a free module, because of Lemma \ref{lemma:hfrofsinge}.
\end{proof}

\subsection{The motivic homology of some spectra in the image of $\mot$} \label{sub:hfrmot}

We will now consider the Bousfield-Kan spectral sequence associated to the totalization $\mot(E)$. The previous subsection gave an explicit computation of the homology of the stages of this cosimplicial motivic spectrum, depending only on the $\G$-equivariant homology of $E$, so we essentially know the $E_2$-page of this spectral sequence.

\begin{rem}
The question of determining the (generalized) homology of a cosimplicial spectrum is hard in general. The only way to get it through is to have some finiteness assumption. This is the meaning of the extra-hypothesis on $E$ appearing in Theorem \ref{thm:cohmote}.
\end{rem}

\begin{pro}
The Bousfield-Kan spectral sequence computing $\hfr_{*}(\mot(\S))$ collapses at $E_2$. Moreover $\hfr_{*}(\mot(\S)) = \hfr_{*}.$
\end{pro}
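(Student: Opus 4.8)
The plan is to analyze the Bousfield--Kan spectral sequence for the cosimplicial motivic spectrum $\mot^{\bullet}(\S)$ whose $n$-th term is $(\sing\re)^n\sing(\S)$. By Lemma \ref{lemma:singreasrswedge}, this cosimplicial spectrum is identified with $\rs^{\wedge\bullet+1}\wedge\oversing(\S)$ under the Quillen equivalence of Theorem \ref{thm:qshinmot}; since $\oversing(\S) = \S$ (the unit corresponds to the unit), the cosimplicial object is simply the cobar construction $\rs^{\wedge\bullet+1}$ on the ring spectrum $\rs$ with respect to the trivial module $\S$. First I would write down the $E_2$-page: by the corollary at the end of subsection \ref{sub:hfrandresing}, applied to $E = \S$ (which has free, indeed trivial in positive degrees, $\hmf$-cohomology, with $\hmf_*(\S) = \hmf_*$), the cohomology of the $n$-th stage is $\hfr_*((\rs)^n\wedge\sing(\S)) \cong (\hmf_*)^{\otimes_{\hfr_*}n}\otimes_{\hmf_*}\hmf_* = (\hmf_*)^{\otimes_{\hfr_*}n+1}$ where the tensor powers are over $\hfr_* = \M$. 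So the $E_1$-page is the cobar complex of the coalgebra $\hmf_* = \M\oplus\nor D\M$ relative to $\M$.

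Next I would compute the cohomology of this cobar complex, i.e.\ the $E_2$-page. The key input is Lemma \ref{lemma:torcomputation}: $\tor_i^{\M}(\hmf_*,\hmf_*) = 0$ for $i > 0$, so all higher $\tor$-groups vanish and the relevant homological algebra degenerates. Concretely, $\hmf_* = \M\oplus\nor D\M$ as an $\M$-module, and $\nor D\M\otimes_{\M}\nor D\M = 0$ because every element of $D\M$ is both $\rho$- and $\tau$-torsion while $\nor D\M$ is a localization-type module; hence the cosimplicial $\M$-module $(\hmf_*)^{\otimes_{\M}\bullet+1}$ has an explicit small form, and its totalization/normalization computes $\hfr_*$ concentrated in cosimplicial degree $0$. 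In other words, the $E_2$-page is $\hfr_* = \M$ sitting in filtration $0$, and there is nothing in positive cosimplicial degree, so the spectral sequence collapses at $E_2$ for degree reasons (no room for differentials).

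Finally I would assemble the convergence statement: the Bousfield--Kan spectral sequence for $\hfr_*(\mot(\S)) = \hfr_*(\mathrm{Tot}\,\mot^{\bullet}(\S))$ converges (there is no $\lim^1$ obstruction since the tower is eventually constant in each bidegree, the spectral sequence being concentrated in a single column), and its abutment is therefore $\hfr_*$. One should double-check that $\mot(\S)$ is connective/finite type enough for convergence, but since the $E_2$-page is concentrated in filtration $0$ this is automatic. The main obstacle I anticipate is purely bookkeeping: correctly matching the cosimplicial identities (faces and degeneracies induced by unit and multiplication of $\rs$, per Lemma \ref{lemma:singreasrswedge}) with the cobar differential on the coalgebra $\hmf_*$ over $\M$, and verifying that the augmentation $\hfr_*\to\hmf_*$ realizes the correct edge map so that the collapse genuinely yields $\hfr_*(\mot(\S)) \cong \hfr_*$ rather than merely an associated graded. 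Everything else is a direct application of the $\tor$-vanishing of Lemma \ref{lemma:torcomputation} together with the identification of stages already established.
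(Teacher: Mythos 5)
Your overall strategy is the same as the paper's: identify the stages of $\mot^{\bullet}(\S)$ with smash powers of $\rs$ (Lemma \ref{lemma:singreasrswedge}), use the K\"unneth collapse coming from Lemma \ref{lemma:torcomputation} to write the $E_1$-page as $n\mapsto(\hmf_{*})^{\otimes_{\M}(n+1)}\cong \M\oplus\nor D\M^{\oplus (n+1)}$, and observe that the associated cochain complex has cohomology $\M$ concentrated in cosimplicial degree $0$, so that $E_2$ is a single column and the spectral sequence collapses. This matches the paper (which obtains the same $E_1$-page and asserts the exactness of the associated complex by inspection; your route via the cobar complex of the split square-zero extension $\hmf_{*}=\M\oplus\nor D\M$, whose ring retraction onto $\M$ contracts the coaugmented complex, is a perfectly good justification). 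A harmless slip: $X\otimes_{\hmf_{*}}\hmf_{*}=X$, so the identification of the $n$-th stage should be stated as the $(n+1)$-fold tensor power over $\M$, which is what you use anyway.

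The genuine gap is in the convergence step. You assert that convergence is ``automatic'' because $E_2$ is concentrated in filtration $0$, and that ``the tower is eventually constant in each bidegree'' for that reason. Concentration of $E_2$ in one column does give $\lim^1_{r}E_r^{s,t}=0$ and hence complete convergence of the spectral sequence to $\lim_n\pi_{*}\mathrm{Tot}_n$ of the cosimplicial spectrum smashed with $\hfr$; but it does not by itself identify that abutment with $\hfr_{*}(\mot(\S))=\pi_{*}(\hfr\wedge\mathrm{Tot}\,\mot^{\bullet}(\S))$, because homology does not commute with totalizations in general, and eventual degreewise constancy of the tower is not a formal consequence of the shape of $E_2$. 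This is precisely where the paper's finiteness input enters: $\M$ and $D\M$ are finite dimensional in each row $(*,k)$, so in each bidegree only finitely many cosimplicial stages contribute to $E_1$; this is the Mittag-Leffler condition needed to apply the complete convergence lemma of \cite{GJ09}, and it is the finiteness phenomenon flagged in the remark preceding Theorem \ref{thm:cohmote} (generalized homology of a cosimplicial spectrum requires such a hypothesis). So you should replace ``automatic'' by this explicit per-row finiteness check; with that added, your argument coincides with the paper's proof.
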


\begin{proof}
This is done by inspection, the $n$th spectrum of this simplicial object has homology 
$$ \M \oplus \nor D\M^{\oplus n+1}.$$
Thus, the Bousfield-Kan spectral sequence converging to the homology of the simplicial spectrum has $E_2$-page
 $$ E_2^{t-n,*} \cong \left\{  \begin{matrix} \Sigma^{-n} \nor D\M^{\oplus n} \text{ if $n$ even} \\ \Sigma^{-n} M \oplus \nor D\M^{\oplus n} \text{ if $n$ odd.} \end{matrix} \right. $$

First of all, this spectral sequence collapses at $E_2$, since the complex associated to the cosimplicial abelian group $\hmf_{*}^{n+1}$ is exact.

To conclude, it suffices to show that the spectral sequence converges completely in the sense of \cite{GJ09}. We invoke \cite[Lemma VI.2.2.0]{GJ09}.  Indeed, the corresponding limit satisfies a Mittag-Leffler condition: let $(t,k)\in \Z^2$. The contribution of $E_1$ to this degree is the sum over $n$ of $\left( \nor D\M^{\oplus n}\right)_{t-n,k}$, when $n$ even, and $\Sigma^{-n} \left( M \oplus \nor D\M^{\oplus n} \right)_{t-n,k}$, when $n$ is odd. Both are zero when $n$ is big enough, as $\M$ and $D\M$ are finite dimensional on each row $(*,k)$.
\end{proof}

\begin{thm} \label{thm:cohmote}
Let $E \in \shg$ be a $\G$-spectrum whose equivariant homology is $\hmf_{*}$-free. Suppose moreover that for all $k \in \Z$,  $\hmf_{(*,k)}(E)$ is a finite dimensional $\F$-vector space. Then, there is a natural isomorphism of $\hfr_{*}$-modules
$$\hfr_{*}(\mot(E)) \cong \hfr_{*} \otimes_{\hmf_{*}} \hmf_{*}(E).$$
\end{thm}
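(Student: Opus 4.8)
The plan is to analyze the Bousfield--Kan spectral sequence associated to the totalization $\mot(E)$, exactly as in the preceding proposition (the case $E = \S$), but now with coefficients determined by the equivariant homology of $E$. By Lemma \ref{lemma:singreasrswedge} we have $\mot^n(E) \cong \rs^n \wedge \oversing(E)$ as a cosimplicial object, so the Corollary following Lemma \ref{lemma:torcomputation} identifies
$$\hfr_{*}(\mot^n(E)) \cong \left( \hmf_{*} \right)^{\otimes_{\hfr_{*}} n} \otimes_{\hmf_{*}} \hmf_{*}(E),$$
using the freeness hypothesis on $\hmf_{*}(E)$ to collapse the relevant K\"unneth spectral sequences. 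Thus the $E_1$-page of the Bousfield--Kan spectral sequence is the cochain complex associated to the cosimplicial $\hfr_{*}$-module $n \mapsto (\hmf_{*})^{\otimes_{\hfr_{*}} n} \otimes_{\hmf_{*}} \hmf_{*}(E)$, which (after tensoring the standard cobar complex with the free module $\hmf_{*}(E)$ over $\hmf_{*}$) is the reduced cobar complex for the extension $\hfr_{*} \to \hmf_{*}$ with coefficients, whose cohomology is $\tor^{\hfr_{*}}_{*}(\hmf_{*}, \hmf_{*}(E))$. By Lemma \ref{lemma:torcomputation}, $\tor^{\hfr_{*}}_{i}(\hmf_{*},\hmf_{*}) = 0$ for $i > 0$; applying this after writing $\hmf_{*}(E)$ as a direct sum of shifted copies of $\hmf_{*}$, we conclude the $E_2$-page is concentrated in cohomological degree $0$, where it equals $\hmf_{*} \otimes_{\hmf_{*}} \hmf_{*}(E) \cong \hmf_{*}(E)$, and as an $\hfr_{*}$-module this is $\hfr_{*} \otimes_{\hmf_{*}} \hmf_{*}(E)$ via the splitting of $\hmf_{*}$.

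So the spectral sequence collapses at $E_2$ for degree reasons, and the only remaining issue is convergence: the totalization is a homotopy limit, and generalized homology does not commute with homotopy limits in general. Here is where the finiteness hypothesis enters. I would follow the same route as in the $E = \S$ case: invoke \cite[Lemma VI.2.2.0]{GJ09} to get complete convergence, checking the Mittag--Leffler condition. Fix a bidegree $(t,k)$. The contribution of $E_1$ in that total degree is a sum over the cosimplicial degree $n$ of pieces of $\left((\hmf_{*})^{\otimes_{\hfr_{*}}n} \otimes_{\hmf_{*}} \hmf_{*}(E)\right)_{t-n,k}$. Since $\hmf_{*}(E)$ is free over $\hmf_{*}$ and, by hypothesis, finite dimensional over $\F$ in each weight $k$, and since $\M$ and $D\M$ are finite dimensional in each fixed weight $k$, each such group is finite dimensional, and vanishes once $n$ is large enough (the internal degree $t-n$ drops out of the support). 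Hence the relevant towers of abelian groups stabilize, the $\lim^1$ term vanishes, and the spectral sequence converges completely to $\hfr_{*}(\mot(E))$.

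Assembling these, $\hfr_{*}(\mot(E)) \cong E_2^{0,*} \cong \hfr_{*} \otimes_{\hmf_{*}} \hmf_{*}(E)$. Naturality in $E$ is clear since every construction used — the cosimplicial spectrum $\mot^{\bullet}$, the identifications of Lemmas \ref{lemma:singreasrswedge} and \ref{lemma:hfrofsinge}, and the Bousfield--Kan spectral sequence — is functorial in $E$.

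The main obstacle is the convergence argument: the collapse at $E_2$ is essentially formal given Lemma \ref{lemma:torcomputation} and the freeness hypothesis, but controlling the homotopy limit requires the finiteness hypothesis precisely so that the Mittag--Leffler / complete convergence criterion of \cite{GJ09} applies. One subtlety worth care is that $\hmf_{*}(E)$ being $\hmf_{*}$-free does not a priori mean it is \emph{finitely generated}, so writing it as a (possibly infinite) direct sum of shifted copies of $\hmf_{*}$ and commuting $\tor$ and the spectral sequence past that direct sum is where the weight-wise finiteness hypothesis does real work; I would make sure the cobar differentials are compatible with this decomposition so that the $E_2$-computation reduces cleanly to the $E = \S$ case tensored up.
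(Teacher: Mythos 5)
Your overall strategy---the Bousfield--Kan spectral sequence for the totalization, identification of the $E_1$-page via Lemma \ref{lemma:singreasrswedge} and the K\"unneth corollary, collapse at $E_2$, and complete convergence via \cite[Lemma VI.2.2.0]{GJ09} using the weight-wise finiteness hypothesis---is the same as the paper's, and your convergence discussion is fine. However, there is a genuine error in your identification of the $E_2$-page. The cochain complex associated to the cosimplicial module $n \mapsto (\hmf_{*})^{\otimes_{\hfr_{*}}n}\otimes_{\hmf_{*}}\hmf_{*}(E)$ is the Amitsur (cobar) complex for the extension $\hfr_{*}\to\hmf_{*}$; it is \emph{not} a bar complex computing $\tor^{\hfr_{*}}_{*}(\hmf_{*},\hmf_{*}(E))$, and Lemma \ref{lemma:torcomputation} does not compute its cohomology (its role is only to collapse the K\"unneth spectral sequences giving $E_1$). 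Concretely, your conclusion that $E_2$ is concentrated in cohomological degree $0$ where it equals $\hmf_{*}\otimes_{\hmf_{*}}\hmf_{*}(E)\cong\hmf_{*}(E)$ already fails for $E=\S$: there the degree-zero cohomology is the equalizer of the two coface maps $d^0,d^1\colon \hmf_{*}\to\hmf_{*}\otimes_{\hfr_{*}}\hmf_{*}$, which is $\hfr_{*}=\M$ and not $\hmf_{*}=\M\oplus\nor D\M$---this is exactly the content of the proposition preceding the theorem, which you cite as your model. The patch ``as an $\hfr_{*}$-module this is $\hfr_{*}\otimes_{\hmf_{*}}\hmf_{*}(E)$ via the splitting of $\hmf_{*}$'' is a non sequitur: $\hmf_{*}(E)$ and $\hfr_{*}\otimes_{\hmf_{*}}\hmf_{*}(E)$ differ by the $\nor D\M$-part, which is precisely what the cosimplicial direction has to kill.

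The correct computation (and the paper's) goes as follows: first treat $E=\S$ by direct inspection, using $\hmf_{*}=\M\oplus\nor D\M$ and the vanishing $D\M\otimes_{\M}D\M=0$ to get $\hfr_{*}(\mot^{n}(\S))\cong\M\oplus\nor D\M^{\oplus n+1}$, and check that the associated complex is exact in positive cosimplicial degrees with $H^{0}\cong\hfr_{*}$; then, for general $E$ with $\hmf_{*}(E)$ free, observe that $E_1^{s,t}(E)\cong E_1^{s,t}(\S)\otimes_{\hmf_{*}}\hmf_{*}(E)$ compatibly with $d_1$ (freeness makes this tensoring exact, even for infinitely generated $\hmf_{*}(E)$), so that $E_2(E)$ is concentrated in degree $0$ where it equals $\hfr_{*}\otimes_{\hmf_{*}}\hmf_{*}(E)$. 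With that replacement of your $E_2$-identification, your convergence argument completes the proof.
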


Before going into the proof, let's take a look at a couple of particular examples of $\hmf_{*}$-modules satisfying the hypothesis of the theorem.

\begin{rem} \label{rk:finitnessrk}
For example, the hypothesis of the theorem are satisfied in the following two particular cases: \begin{itemize}
\item $\hmf_{*}(E)$ is a finitely generated free $\hmf_{*}$-module,
\item the generators of $\hmf_{*}(E)$ as an $\hmf_{*}$-module are concentrated  are in degree $(t,k)$ for $t \geq 0$, $\frac{k}{2}+1 \leq k \leq t$, and there are finitely many in each degree.
\end{itemize}
\end{rem}

Note that the equivariant Steenrod algebra itself, and any of its $\hmf_*$-free subalgebras satisfies the second condition of remark \ref{rk:finitnessrk}.

\begin{proof}[Proof of Theorem \ref{thm:cohmote}]
Apply motivic homology to the cosimplicial motivic spectrum $\mot(E)$. As $\hmf_{*}(E)$ is a free $\hmf_{*}$-module, the $E_1$-page $E^{s,t}_1(E)$ of the Bousfield-Kan spectral sequence computing $\hfr_{*}(Tot(\mot(E)))$ splits as $$E^{s,t}_1(E) \cong E^{s,t}_1(\S) \otimes_{\hmf_{*}} \hmf_{*}(E).$$ Although the previous splitting is only a splitting of $\hfr_{*}$-modules, it is compatible with the first differential by construction. Thus, it collapses to the desired $\hfr_{*}$-module.
\end{proof}

\subsection{Steenrod action on the image of $\mot$} \label{sub:steenrodmot}

Let $E \in \shg$ be a spectrum satisfying the hypothesis of Theorem \ref{thm:cohmote}. There is a natural action of the motivic Steenrod algebra on $\hfr_{*}(E)$, since this is the cohomology of a motivic spectrum. There is also an action of the equivariant Steenrod algebra on $\hmf_{*}(E)$. We will see that these two are closely related.

First, recall that by Proposition \ref{pro:rehfr}, there is an isomorphism $\overre(\hmf) \cong \rs \wedge \hfr$. This gives a map
$$ \hfr \wedge \hfr \rightarrow \rs \wedge \hfr \wedge \hfr \cong \overre(\hmf \wedge \hmf).$$
Since $\re$ is a monoidal functor, taking homotopy groups gives the following Hopf algebroid morphism.

\begin{de}
Betti realization gives a morphism of Hopf algebroids
$$\re(\A) : (\hfr_{*},\Ar) \rightarrow (\hmf_{*}, \Aq).$$
\end{de}

\begin{pro} \label{pro:reandstecomod}
The map $\re(\A)$ is induced by the inclusion of $\hmf_{*}$-modules 
$$\hfr_{*} \inj \hmf_{*}.$$

In particular, the associated functor
$$\re(\A) : \Ar\dashmod Comod \rightarrow \Aq\dashmod Comod$$ is given by extension of scalars $\hmf_{*}\otimes_{\hfr_{*}} (-)$.
\end{pro}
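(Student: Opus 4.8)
The plan is to unwind the construction of $\re(\A)$ and identify it with the map on homotopy groups induced by $\overre(\hmf) \cong \rs \wedge \hfr$. The starting point is Lemma \ref{lemma:hfrofsinge}, which gives $\hfr_*(\sing \hmf) \cong \hmf_*$ as $\hfr_*$-modules; more precisely, that lemma identifies $[S^*, \sing(\hmf) \wedge \hfr]_\R$ with $[S^*, \hmf \wedge \hmf]_\G = \Aq$. On the other hand, taking $E = \S$, the same circle of ideas identifies $\hfr_*(\hfr) = \Ar$ sitting inside $\hfr_*(\sing \hmf) \cong \hmf_*$ via the map $\hfr \wedge \hfr \to \sing(\re \hfr) \wedge \hfr \cong \sing(\hmf) \wedge \hfr$ coming from the unit $\hfr \to \sing \re \hfr$. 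So the first step is to check that, under the identification of Theorem \ref{thm:qshinmot} (i.e.\ $\overre(\hmf) \cong \rs \wedge \hfr$ and the projection formula of Lemma \ref{lemma:projection}), the composite $\hfr \wedge \hfr \to \overre(\hmf \wedge \hmf)$ defining $\re(\A)$ on homotopy groups is precisely the map induced by the inclusion of the unit $\S \to \rs$ smashed with $\hfr \wedge \hfr$. That inclusion is $\hfr_*$-linear and exhibits $\Ar = \hfr_*(\hfr) \hookrightarrow \hfr_*(\rs \wedge \hfr) \cong \hmf_* \otimes_{\hfr_*} \hfr_*(\hfr) = \hmf_* \otimes_{\hfr_*} \Ar = \Aq$, where the last equality is Remark \ref{rk:steenrodalgebras}. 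Matching this with the explicit presentations of $\Ar$ and $\Aq$ in Proposition \ref{pro:steenrod} (which literally differ only by replacing $\M$ with $\hmf_* = \M \oplus \nor D\M$) shows the map is the inclusion $\hfr_* \inj \hmf_*$ extended $\Ar$-algebra-linearly, i.e.\ it sends $\tau_i \mapsto \tau_i$, $\xi_i \mapsto \xi_i$ and is the evident inclusion on coefficients.

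Second, I would record that this map of Hopf algebroids is a \emph{flat} base change: $\hmf_*$ is flat over $\hfr_*$ in the relevant sense — indeed $\Aq_* \cong \hmf_* \otimes_{\hfr_*} \Ar_*$ by Remark \ref{rk:steenrodalgebras}, and $\tor_i^{\hfr_*}(\hmf_*, \hmf_*) = 0$ for $i>0$ by Lemma \ref{lemma:torcomputation} — so that the extension-of-scalars functor on comodules is well-behaved. Concretely, given an $\Ar$-comodule $M$ with coaction $\psi : M \to \Ar_* \otimes_{\hfr_*} M$, one forms $\hmf_* \otimes_{\hfr_*} M$ with coaction $\hmf_* \otimes_{\hfr_*} \psi$ landing in $\hmf_* \otimes_{\hfr_*} \Ar_* \otimes_{\hfr_*} M \cong \Aq_* \otimes_{\hmf_*} (\hmf_* \otimes_{\hfr_*} M)$, the last isomorphism being exactly Remark \ref{rk:steenrodalgebras}. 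The verification that the comodule axioms (counitality, coassociativity) are preserved is then formal from functoriality of $\otimes$ and the fact that $\re(\A)$ is a map of Hopf algebroids. This is precisely the statement that the functor associated to a flat Hopf-algebroid morphism along a flat base change is extension of scalars.

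The main obstacle — really the only content — is the first step: pinning down the map $\re(\A)$ on the nose rather than just abstractly. One has to be careful that the isomorphism $\overre(\hmf) \cong \rs \wedge \hfr$ used to build $\re(\A)$ is the \emph{same} one (up to the coherences of the monoidal structure on $\re$, Proposition \ref{pro:resing}) as the one implicit in Theorem \ref{thm:qshinmot} and in the computation of Lemma \ref{lemma:hfrofsinge}; otherwise one only gets the conclusion up to an automorphism of $\Aq$ over $\hmf_*$. I would handle this by running everything through the projection formula (Lemma \ref{lemma:projection}) and the adjunction $(\re, \sing)$ applied to $\re(S^*) = S^*$ (Proposition \ref{pro:resing}), exactly as in the proof of Lemma \ref{lemma:hfrofsinge}, so that at each stage the identification is the canonical one; the resulting map on $\pi_*$ is then forced to be the canonical inclusion $\hfr_* \inj \hmf_*$ followed by $\Ar$-linear extension. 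Once that is in hand, the identification of the induced functor on comodule categories with $\hmf_* \otimes_{\hfr_*} (-)$ is immediate from the definitions and Remark \ref{rk:steenrodalgebras}.
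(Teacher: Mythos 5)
Your argument is correct and follows essentially the same route as the paper: identify $\re(\A)$ on coefficients as the inclusion $\hfr_* \inj \hmf_*$ coming from the unit $\S \to \rs$, then use the freeness of $\Ar_*$ over $\hfr_*$ (the paper phrases this as decomposing $\hfr \wedge \hfr$ into a coproduct of copies of $\hfr$, you via the K\"unneth-type identification $\hfr_*(\rs \wedge \hfr) \cong \hmf_* \otimes_{\hfr_*} \Ar_*$) together with $\Aq_* \cong \hmf_* \otimes_{\hfr_*} \Ar_*$ to pin down the map on cooperations and the extension-of-scalars description of the induced functor. Your added flatness check and the care about which identification $\overre(\hmf) \cong \rs \wedge \hfr$ is used are harmless refinements of the same proof (only note the garbled phrase ``$\Ar$ sitting inside $\hfr_*(\sing\hmf) \cong \hmf_*$'', which should refer to $\hfr_*(\sing(\hmf)\wedge\hfr)$).
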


\begin{proof}
This is trivial for the "objects" of the Hopf algebroid. For the "morphisms", decompose $\hfr \wedge \hfr$ as a coproduct of $\hfr$ on both sides. The result then follows from the "objects" part, together with the fact that there exist an isomorphism $\Aq \cong \hmf_{*} \otimes_{\hfr_{*}} \Ar$.
\end{proof}

\begin{thm} \label{thm:hmote}
Let $E \in \shg$ be a $\G$-spectrum whose equivariant cohomology is $\hmf_{*}$-free. Suppose moreover that for all $k \in \Z$,  $\hmf_{(*,k)}(E)$ is a finite dimensional $\F$-vector space. Then
$$\hfr_{*}(\mot(E)) \cong \hfr_{*} \otimes_{\hmf_{*}} \hmf_{*}(E).$$
as modules over the motivic Steenrod algebra.
\end{thm}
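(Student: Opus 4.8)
The plan is to upgrade the isomorphism of $\hfr_{*}$-modules established in Theorem~\ref{thm:cohmote} to an isomorphism of comodules over the dual motivic Steenrod algebra $(\hfr_{*},\Ar)$, which is the same data as an isomorphism of modules over the motivic Steenrod algebra. The key point is that everything in sight is natural, so it suffices to track the Steenrod (co)action through the Bousfield--Kan spectral sequence for the totalization $\mot(E)$. First I would recall that, by Lemma~\ref{lemma:singreasrswedge} and the computations of Section~\ref{sub:hfrandresing}, the $E_1$-page of the Bousfield--Kan spectral sequence computing $\hfr_{*}(\mot(E))$ is the cosimplicial $\Ar$-comodule $n \mapsto \hfr_{*}((\rs)^{n}\wedge \sing(E)) \cong \left(\hmf_{*}\right)^{\otimes_{\hfr_{*}}n}\otimes_{\hmf_{*}}\hmf_{*}(E)$, where the comodule structure is the one coming from the motivic cohomology of an actual motivic spectrum. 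So the content to extract is: (i) this $E_1$-page, as a cosimplicial $\Ar$-comodule, is induced via $\overre(\A)$ (i.e. extension of scalars $\hmf_{*}\otimes_{\hfr_{*}}(-)$, by Proposition~\ref{pro:reandstecomod}) from the corresponding cosimplicial $\Aq$-comodule built from $\hmf_{*}(E)$; and (ii) the surviving line ($s=0$, by the collapse proved in Theorem~\ref{thm:cohmote}) inherits exactly the $\Ar$-comodule structure $\hfr_{*}\otimes_{\hmf_{*}}\hmf_{*}(E)$, where the right-hand side carries the diagonal comodule structure coming from the $\Aq$-comodule $\hmf_{*}(E)$ restricted along $\Ar \inj \Aq$ and base-changed.

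The organization of the argument would be as follows. First, observe that each stage $\mot^{n}(E) = (\re\sing)^{n}\sing(E)$ is, by the right-hand identification $\rs^{n}\wedge\oversing(E)$ and the fact that $\overre(\hmf)\cong\rs\wedge\hfr$, a motivic spectrum whose homology is computed by a Künneth isomorphism (Lemma~\ref{lemma:torcomputation} kills the higher Tor); moreover the map $\re(\A)\colon (\hfr_{*},\Ar)\to(\hmf_{*},\Aq)$ of Proposition~\ref{pro:reandstecomod} is precisely compatible, stage by stage, with the identification $\hfr_{*}(\mot^{n}(E))\cong \hmf_{*}\otimes_{\hfr_{*}}\hmf_{*}(\mot^{n}_{C_2}(E))$ where $\mot^{\bullet}_{C_2}$ denotes the (degenerate, cf. Remark~\ref{rk:classicalrewriting}) equivariant cobar construction --- but really the cleaner formulation is that the whole cosimplicial $\Ar$-comodule $\hfr_{*}(\mot^{\bullet}(E))$ is obtained from the cobar-type cosimplicial $\hmf_{*}$-module $n\mapsto (\hmf_{*})^{\otimes_{\hfr_{*}}n}\otimes_{\hmf_{*}}\hmf_{*}(E)$ by remembering that each term is the homology of a motivic spectrum and hence an $\Ar$-comodule. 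Second, I would run the $E_1$-differential: by the proof of Theorem~\ref{thm:cohmote} the $E_1$-page splits as $E_1^{s,t}(\S)\otimes_{\hmf_{*}}\hmf_{*}(E)$, the spectral sequence collapses at $E_2$ onto the $s=0$ line, and since the splitting and the collapse are both induced by maps of motivic spectra they are maps of $\Ar$-comodules. Third, identify the $s=0$ line: it is $\hfr_{*}(\sing(E))\cong\hmf_{*}(E)$ as $\hfr_{*}$-modules by Lemma~\ref{lemma:hfrofsinge}, but the surviving classes in $\hfr_{*}(\mot(E))$ are the ones hit from $\hfr_{*}\subset\hmf_{*}$-linear combinations, and the resulting $\Ar$-comodule structure is exactly the base change $\hfr_{*}\otimes_{\hmf_{*}}\hmf_{*}(E)$ with its diagonal coaction, i.e. $\mot$ on (co)modules is the inverse of $\re(\A)=\hmf_{*}\otimes_{\hfr_{*}}(-)$. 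Finally I would invoke the standard dictionary: over the flat Hopf algebroid $(\hfr_{*},\Ar)$, the category of (left) $\Ar$-comodules is equivalent to the appropriate category of modules over the motivic Steenrod algebra, so the comodule isomorphism is the asserted isomorphism of modules over the motivic Steenrod algebra. This is where Remark~\ref{rk:steenrodalgebras} ($\Aq = \hmf_{*}\otimes_{\hfr_{*}}\Ar$) does the real work: it guarantees that the coaction on $\hmf_{*}(E)$, which a priori is $\Aq$-linear data, is faithfully recorded by, and recovered from, its $\Ar$-restriction after base change.

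The main obstacle I anticipate is bookkeeping the multiplicativity and naturality of the Steenrod coaction through the Bousfield--Kan (totalization) spectral sequence: one needs to know not merely that each $E_r$-page is a complex of $\Ar$-comodules, but that the extension problems on the $E_\infty$-page are trivial \emph{as comodules}, not just as $\hfr_{*}$-modules. Here the finiteness hypothesis ($\hmf_{(*,k)}(E)$ finite-dimensional over $\F$ for each weight $k$) and the complete convergence established in the preceding proposition are essential: they force the filtration on $\hfr_{*}(\mot(E))$ to be finite in each bidegree, so an $\hfr_{*}$-linear splitting that is compatible with the (co)action on the associated graded can be promoted to a comodule splitting --- and in fact the cleanest route is to note the splitting itself comes from the coaugmentation $\sing(E)\to\mot(E)$, hence is a map of motivic spectra, hence $\Ar$-colinear, so the collapse identifies $\hfr_{*}(\mot(E))$ with the $s=0$ line \emph{as $\Ar$-comodules} with no residual extension problem. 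A secondary, more routine point is checking that the $\hfr_{*}\inj\hmf_{*}$ description of $\re(\A)$ is genuinely the coaction appearing on the Künneth terms; this follows from monoidality of $\re$ together with Proposition~\ref{pro:rehfr}, exactly as in the proof of Proposition~\ref{pro:reandstecomod}, applied smash-power by smash-power.
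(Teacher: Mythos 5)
Your argument is correct in outline, but it is not the route the paper takes, and the difference is worth recording. The paper's proof pivots back to the equivariant category: it applies $\re$ to $\mot(E)$ and identifies the $\Aq$-comodule $\hmf_{*}(\re\,\mot(E))$ in two ways --- on one hand as the extension of scalars $\hmf_{*}\otimes_{\hfr_{*}}\hfr_{*}(\mot(E))$ via Proposition \ref{pro:reandstecomod}, on the other hand as $\hmf_{*}(E)$ with its honest $\Aq$-coaction, because the coaugmented cosimplicial object $\re\,\mot^{\bullet}(E)\leftarrow E$ has extra degeneracies (Proposition \ref{pro:contractiblecosimplicial}), so the realized Bousfield--Kan spectral sequence degenerates onto $\hmf_{*}(E)$; naturality in $\hmf$ then pins down the $\Ar$-comodule structure on $\hfr_{*}(\mot(E))$. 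You instead stay entirely in $\shr$: you use the coaugmentation $\mot(E)\to\sing(E)$, collapse plus complete convergence, to embed $\hfr_{*}(\mot(E))$ as an $\Ar$-subcomodule (the $s=0$ line) of $\hfr_{*}(\sing(E))$, and then identify the $\Ar$-coaction on $\hfr_{*}(\sing(E))\cong\hmf_{*}(E)$ with the $\Aq$-coaction read through $\Aq\cong\hmf_{*}\otimes_{\hfr_{*}}\Ar$ via monoidality of $\re$, the projection formula and Proposition \ref{pro:rehfr}. Each approach has a real advantage: yours never realizes the totalization, so it sidesteps the interchange of $\re$ with $\mathrm{Tot}$ that the paper's statement ``the spectral sequence computing $\hmf_{*}(\re(\mot(E)))$'' quietly elides (recall $\re$ does not commute with homotopy limits, cf.\ Remark \ref{rk:classicalrewriting}); the paper's route gets the target comodule, $\hmf_{*}(E)$ with its full $\Aq$-structure, essentially for free from the extra degeneracies, whereas in your argument the entire weight rests on the ``secondary, routine point'' you flag at the end --- that the $\Ar$-coaction on $\hfr_{*}(\sing(E))$ really is the restricted/base-changed $\Aq$-coaction. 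That verification (smash-power by smash-power, using Lemma \ref{lemma:projection}, Lemma \ref{lemma:torcomputation} for the K\"unneth identifications, and Proposition \ref{pro:rehfr}) is exactly the content the paper packages into Proposition \ref{pro:reandstecomod} together with the contractibility statement, so you should write it out rather than defer it; once it is in place, your edge-map argument does close the comodule extension problem just as you say, using the finiteness hypothesis for complete convergence exactly as in Theorem \ref{thm:cohmote}.
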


\begin{proof}
Apply Betti realization to $\mot(E)$. For the same reason as in the proof of Theorem \ref{thm:cohmote}, the Bousfield-Kan spectral sequence computing $\hmf_{*}(\re(\mot(E)))$ collapses at $E_2$, and the spectral sequence converges.

Thus, the $\Aq$-comodule structure on $\hmf_{*}(\re(\mot(E)))$ can be identified in two ways: \begin{itemize} \item it is precisely $\hfr_{*}(\mot(E)) \cong \hfr_{*} \otimes_{\hmf_{*}} \hmf_{*}(E)$, by Proposition \ref{pro:reandstecomod},
\item it can also be obtained through the Bousfield-Kan spectral sequence computing $\hmf_{*}(Tot(\re\mot^{\bullet}(E)))$. The latter one is $ \hmf_{*}(E)$ by Proposition \ref{pro:contractiblecosimplicial}.
\end{itemize}
Naturality in $\hmf$ finishes the proof.
\end{proof}

Recall that there are $\G$-equivariant and motivic versions of the sub-algebras of the Steenrod algebra $\E(n)$ and $\A(n)$ (see \cite{Ric1} and \cite{Gre12}). Denote these $\Eq(n)$, $\Aq(n)$, and $\cEr(n)$, $\cAr(n)$. Recall from \cite{Ric1} and \cite{Gre12} that the quotient Hopf algebroids $\cAr\sur\cEr(n)$, $\cAr\sur\cAr(n)$, $\Aq\sur\Eq(n)$, and $\Aq\sur\Eq(n)$ are free as modules over the cohomology of a point.

\begin{cor}
Let $E \in \shg$ be an equivariant spectrum whose homology is $(\Aq \sur B)_*$, where $B$ is any of the algebras $\Eq(n)$, $\Aq(n)$, then the spectrum $\mot(E)$ has also homology $(\cAr \sur B_{\R})$, where $B_{\R}$ is the sub-Hopf-algebra of the movitic Steenrod algebra $\M \otimes_{\hmf_*} B$.
\end{cor}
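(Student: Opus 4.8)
The statement is essentially a direct application of Theorem \ref{thm:hmote} once we check that the hypotheses are met, so the plan is first to verify those hypotheses and then to upgrade the resulting module isomorphism to the claimed identification with $\cAr\sur B_{\R}$. The first step is to recall from the discussion after Remark \ref{rk:finitnessrk} that the equivariant Steenrod algebra $\Aq$, and hence any of its $\hmf_*$-free sub-Hopf-algebroids $B$, satisfies the second finiteness condition of Remark \ref{rk:finitnessrk}: the generators sit in the appropriate range of bidegrees and are finite in each degree. Since $B$ being $\Eq(n)$ or $\Aq(n)$ is $\hmf_*$-free, the quotient $\Aq\sur B$ is $\hmf_*$-free as well (this is the statement cited from \cite{Ric1} and \cite{Gre12}), and its generators inherit the same degree bounds; thus $\hmf_*(E) \cong (\Aq\sur B)_*$ is $\hmf_*$-free with $\hmf_{(*,k)}(E)$ finite-dimensional over $\F$ for each $k$. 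This is exactly the hypothesis of Theorem \ref{thm:hmote}.

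Applying Theorem \ref{thm:hmote} then yields an isomorphism of $\cAr$-comodules
$$\hfr_*(\mot(E)) \cong \hfr_* \otimes_{\hmf_*} \hmf_*(E) \cong \hfr_* \otimes_{\hmf_*} (\Aq\sur B)_*.$$
The remaining task is to rewrite the right-hand side as $(\cAr\sur B_{\R})_*$. Here I would use Remark \ref{rk:steenrodalgebras}, which gives $\Aq \cong \hmf_* \otimes_{\hfr_*} \Ar$, together with the definition $B_{\R} := \M \otimes_{\hmf_*} B$ wait --- more precisely $B$ over $\hmf_*$ and $B_{\R}$ over $\M = \hfr_*$ are related by extension of scalars $B \cong \hmf_* \otimes_{\hfr_*} B_{\R}$. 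Since quotients of Hopf algebroids commute with flat base change (both $\Aq$ and $B$ are $\hmf_*$-flat, being free), we get $\Aq\sur B \cong \hmf_* \otimes_{\hfr_*} (\cAr\sur B_{\R})$, and therefore
$$\hfr_* \otimes_{\hmf_*} (\Aq\sur B)_* \cong \hfr_* \otimes_{\hmf_*} \hmf_* \otimes_{\hfr_*} (\cAr\sur B_{\R})_* \cong (\cAr\sur B_{\R})_*$$
as $\hfr_*$-modules. Finally one checks that this chain of isomorphisms is compatible with the comodule structures: Proposition \ref{pro:reandstecomod} identifies the functor $\re(\A)$ on comodule categories with extension of scalars $\hmf_* \otimes_{\hfr_*}(-)$, so the $\cAr$-comodule structure transported from $\hfr_*(\mot(E))$ through the above identifications is precisely the standard $\cAr\sur B_{\R}$-comodule structure.

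The main obstacle, and the only point that is not completely formal, is the compatibility of the base-change identification with the \emph{algebra} (not merely comodule) structure claimed in the statement: one must confirm that $B_{\R}$, defined as $\M\otimes_{\hmf_*}B$ viewed inside $\cAr$, is genuinely a sub-Hopf-algebroid and that $\cAr\sur B_{\R}$ is the correct quotient, so that the isomorphism of Theorem \ref{thm:hmote} respects both the left $\cAr$-module structure and whatever residual algebra structure $\cAr\sur B_{\R}$ carries. This requires knowing that forming sub-Hopf-algebroids and their quotients is stable under the flat base change $\hfr_* \to \hmf_*$, which follows from flatness of all the objects involved (the freeness hypotheses guarantee this) together with the explicit compatibility of the diagonal formulae in Proposition \ref{pro:steenrod}, which are literally identical in the motivic and equivariant cases. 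Given that, the corollary follows; I would keep the write-up short, citing Theorem \ref{thm:hmote}, Remark \ref{rk:steenrodalgebras}, and Proposition \ref{pro:reandstecomod} as the three ingredients.
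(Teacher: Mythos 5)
Your argument is exactly the one the paper intends: the corollary is stated without proof as an immediate consequence of Theorem \ref{thm:hmote}, with the hypotheses supplied by the freeness of $\Aq\sur\Eq(n)$ and $\Aq\sur\Aq(n)$ over the cohomology of a point (cited from \cite{Ric1}, \cite{Gre12}) together with the finiteness observation of Remark \ref{rk:finitnessrk}, and the identification $\hfr_*\otimes_{\hmf_*}(\Aq\sur B)_*\cong(\cAr\sur B_{\R})_*$ via Remark \ref{rk:steenrodalgebras} and Proposition \ref{pro:reandstecomod}. Your write-up fills in these steps correctly and follows the same route.
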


\part{Equivariant and motivic topological modular forms} \label{part:mmf}

\section{The definition of equivariant modular forms} \label{sec:definitiontmf}

The tools developed in the first part of this paper reduce the construction of $\tmfr$ to an equivariant one. Indeed, by Theorem \ref{thm:cohmote}, it suffices to plug a $\G$-equivariant version of $\tmf$ in the functor $(c^*(-))^{\wedge}_{\sing(\S)}$ to produce a motivic spectrum with the desired cohomology as a module over the motivic Steenrod algebra. This part is devoted to the construction of this equivariant version of $\tmf$.

The main piece of technology we are using here is the Tate diagram, and apart from the general theory, we will focus on the particular case when $E = \tmf$. In that case, we want to use the spectrum $\tmf$ to produce $\tmfq$, an equivariant refinement of $\tmf$ whose cohomology is $\Aq\sur \Aq(2)$.

The author is indebted to Mike Hill for suggesting the construction of $\tmfq$ from its Tate diagram.

\subsection{The Tate diagram} \label{sub:tatediag}
The ultimate goal here is to construct a $\G$-spectrum $\tmfq$ which is a $\G$-equivariant refinement of $\tmf$. In other words, we want the underlying non equivariant spectrum and the fixed points of this $\tmfq$ to have a prescribed homotopy type. We thus start this section by a well-known tool to analyse a $\G$-equivariant spectrum from non-equivariant data: the Tate diagram.

Let $\univ$ be a universal $\G$-space, that is a contractible free $\G$-space. Such a $\G$-space is unique up to $\G$-equivariant homotopy. Although we will not directly use it, it is good to know that a space that satisfies these properties is the unit sphere in $\infty\sign$, so we can take $\univ = S(\infty\sign)$. Let $\tuniv$ be the cofiber of the map $\univ_+ \rightarrow S^0$, which sends $\univ$ to the non-base-point. Again, a possible model for it is $S^{\infty\sign}$. A consequence of the natural filtration of this sphere by $S^{n\sign}$ plays a role  in Mahowald's model for the Tate spectrum, which in turn is a crucial ingredient in \cite{BR17}.

Let $E \shg$. The Tate diagram of $E$ is the following commutative diagram, where the rows are cofiber sequences:

\begin{equation} \label{eq:tate}
\xymatrix{ \univ_+ \wedge E \ar[r] \ar[d] & E \ar[r] \ar[d] & \tuniv \wedge E \ar[d] \\
\univ_+ \wedge F(\univ_+, E) \ar[r] & F(\univ_+,E) \ar[r] & \tuniv \wedge F(\univ_+,E)}
\end{equation}

Note that by Greenlees-May \cite[Section I.1]{GM03}, the leftmost vertical arrow is a weak equivalence. In particular, the rightmost square is a homotopy pullback.

The spectra that appear in the Tate diagram have alternative name, which are more convenient to use:

\begin{de}
We rewrite the Tate diagram of $E$ \eqref{eq:tate} using the usual notations for the spectra appearing in it:
\begin{equation*}
\xymatrix{ \univ_+ \wedge E \ar[r] \ar[d]^{\cong} & E \ar[r] \ar[d] & \tuniv \wedge E \ar[d] \\
f(E) \ar[r] & c(E) \ar[r] & t(E).}
\end{equation*}
The spectrum $t(E)^{\G}$, where $(-)^{\G}$ denotes the fixed points functor, is called the Tate spectrum of $E$.
\end{de}

Recall that Lewis defined in \cite{Le95} a change of universe functor. This gives a pushforward functor $i_* : \sh \rightarrow \shg$, which sends a non-equivariant spectrum $X$ to the extension to the complete universe of the naive $\G$-equivariant spectrum $X$, viewed as a spectrum with trivial $\G$-action (see \textit{loc cit} for universes and change of universe functors).

\begin{pro} \label{pro:tatering} \label{pro:feqring}
Let $X$ be a non-equivariant ring spectrum. Denote again by $X$ the pushforward of $X$ in the category of $\G$-spectra. Then $t(X)$, $c(X)$ and $\tuniv\wedge X$  are ring $\G$-spectra. Moreover, the following square is a pullback of ring $\G$-spectra:
\begin{equation}
\xymatrix{ X \ar[r] \ar[d] & \tuniv\wedge X \ar[d] \\ c(X) \ar[r] & t(X)}
\end{equation}
\end{pro}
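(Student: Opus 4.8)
The plan is to build everything out of the Tate diagram~\eqref{eq:tate}, using that homotopy pullbacks in $\shg$ of diagrams of ring spectra along maps of ring spectra are again ring spectra, provided the three corner objects are. So the proof splits into two tasks: first, promote $\tuniv \wedge X$, $c(X) = F(\univ_+, X)$ and $t(X) = \tuniv \wedge F(\univ_+, X)$ to ring $\G$-spectra compatibly with the maps of the square; second, identify the square as a homotopy pullback so that $X$ inherits the ring structure (which it of course already has) and, more importantly, so that the maps in the square are ring maps.

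\textbf{Step 1: the function spectrum $c(X) = F(\univ_+, X)$ is a ring $\G$-spectrum.} Since $\univ_+$ is a (based) $\G$-space and $X$ is a ring $\G$-spectrum (the pushforward $i_* X$ is a ring spectrum because $i_*$ is lax monoidal, being right adjoint to the strong monoidal change-of-universe functor — I would cite \cite{Le95} here), the function spectrum $F(\univ_+, X)$ is a ring $\G$-spectrum: the diagonal $\univ \to \univ \times \univ$ together with the multiplication on $X$ gives $F(\univ_+,X) \wedge F(\univ_+, X) \to F(\univ_+ \wedge \univ_+, X \wedge X) \to F(\univ_+, X)$, and $S^0 \to F(\univ_+, X)$ comes from the unit. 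Associativity/unitality follow from those of $X$ and coassociativity/counitality of the diagonal on the space $\univ$. The natural map $X = F(S^0, X) \to F(\univ_+, X)$ induced by $\univ_+ \to S^0$ is then a ring map because that collapse map is a map of coalgebra objects in based $\G$-spaces.

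\textbf{Step 2: smashing with $\tuniv$ is lax monoidal, so $\tuniv \wedge X$ and $t(X)$ are rings.} The key point is that $\tuniv = \widetilde{\univ}$ is an idempotent in $\shg$: the map $S^0 \to \tuniv$ becomes an equivalence after smashing with $\tuniv$, i.e. $\tuniv \wedge \tuniv \simeq \tuniv$. Hence $\tuniv$ is a (non-unital-looking but actually unital, via $S^0 \to \tuniv$) commutative ring object, and more usefully the functor $\tuniv \wedge (-)$ is (lax, in fact strong on its image) symmetric monoidal — this is the standard formalism of smashing localizations, and I would invoke \cite[Section I.1]{GM03} or the idempotent-ring-spectrum discussion there. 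Applying it to the ring $\G$-spectra $X$ and $c(X) = F(\univ_+, X)$ yields ring structures on $\tuniv \wedge X$ and on $t(X) = \tuniv \wedge F(\univ_+, X)$, and the maps $\tuniv \wedge X \to \tuniv \wedge F(\univ_+, X)$ and $X \to \tuniv \wedge X$, $F(\univ_+,X) \to \tuniv \wedge F(\univ_+,X)$ are all ring maps (images of ring maps, or units of the idempotent). This gives the commuting square of ring $\G$-spectra
\begin{equation*}
\xymatrix{ X \ar[r] \ar[d] & \tuniv \wedge X \ar[d] \\ c(X) \ar[r] & t(X).}
\end{equation*}

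\textbf{Step 3: the square is a homotopy pullback, hence a pullback of ring spectra.} This is exactly the content of the Tate diagram together with Greenlees--May: in~\eqref{eq:tate} the leftmost vertical map $\univ_+ \wedge X \to f(X) = \univ_+ \wedge F(\univ_+, X)$ is a weak equivalence \cite[Section I.1]{GM03}, so the rows being cofiber sequences forces the right-hand square — which is precisely the square above — to be a homotopy pullback in $\shg$. Finally, since the forgetful functor from (associative, or $E_\infty$) ring $\G$-spectra to $\shg$ creates homotopy limits, and we have exhibited all three of $\tuniv \wedge X$, $c(X)$, $t(X)$ as ring $\G$-spectra with the two maps into $t(X)$ being ring maps, the homotopy pullback $X$ is the homotopy pullback computed in ring $\G$-spectra; in particular the square is a pullback of ring $\G$-spectra. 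The main obstacle is Step 2: one must be careful that $\tuniv \wedge (-)$ is genuinely (lax) monoidal and that the resulting multiplications are compatible with the maps of the square — but this is the well-documented idempotent/smashing-localization formalism, and the compatibility is automatic once one knows $S^0 \to \tuniv$ presents a smashing localization. A clean alternative for Step 2, avoiding point-set monoidality issues, is to observe that $t(X) = \tuniv \wedge F(\univ_+, X)$ is itself a smashing localization of the ring spectrum $c(X)$ (Tate construction as Bousfield localization), so inherits a ring structure and the localization map $c(X) \to t(X)$ is a ring map, and likewise $X \to \tuniv\wedge X$.
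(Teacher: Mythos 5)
Your argument is correct in substance, but it takes a much longer road than the paper does: the paper's entire proof is a citation of \cite[Proposition 3.5]{GM03} applied to $i_*X$, which is precisely the statement that the Tate diagram of a ring $\G$-spectrum is a diagram of ring $\G$-spectra with the right-hand square a pullback. What you have written is essentially a reconstruction of the proof behind that citation: the coalgebra structure on $\univ_+$ making $c(X)=F(\univ_+,X)$ a ring with $X\to c(X)$ a ring map, the idempotence $\tuniv\wedge\tuniv\simeq\tuniv$ making $\tuniv\wedge(-)$ monoidal so that $\tuniv\wedge X$ and $t(X)$ are rings and the remaining maps are ring maps, and the homotopy-pullback property coming from the Greenlees--May equivalence $\univ_+\wedge X\simeq\univ_+\wedge F(\univ_+,X)$ together with the two rows being cofiber sequences. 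Your route buys self-containedness and makes visible exactly where each ring structure comes from (the smashing-localization alternative at the end is the cleanest way to phrase Step 2); the paper's route buys brevity and lets Greenlees--May handle the coherence bookkeeping. Two small points to fix or flag: the pushforward $i_*$ is the \emph{left} adjoint of the (monoidal) restriction functor $i^*$, not its right adjoint, so your parenthetical justification that $i_*X$ is a ring has the adjunction backwards even though the conclusion is standard; and your Step 3 invocation of ``the forgetful functor from ring $\G$-spectra creates homotopy limits'' implicitly upgrades everything to structured ($A_\infty$/$E_\infty$) rings, whereas the ring structures produced in Steps 1--2 as you state them live most naturally at the naive/up-to-homotopy level --- this is exactly the level at which the paper works (compare its Conjecture on $\tmfq$), and it is the coherence issue that citing \cite[Proposition 3.5]{GM03} quietly sidesteps.
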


\begin{proof}
See \cite[Proposition 3.5]{GM03} applied to the $\G$-spectrum $X$ (denoted $i_*X$ in \textit{loc cit}) gives the result.
\end{proof}

\subsection{The definition of $\tmfq$} \label{sub:tmfq}

Let $n \in \N$ and $E \in \shg$. We denote by $E[x^n]$ the spectrum $\bigvee_{i=0}^{\infty} \Sigma^{ni} E$. We also denote by $E((x^n))$ the infinite product $\prod_{i=-\infty}^{\infty} \Sigma^{ni}E$. Note that such constructions appeared in \cite{AMS98}.
Furthermore, when $E$ is a connective spectrum, there is a weak equivalence
\begin{equation*}
\lim_i \bigvee_{k = -i}^{\infty} \Sigma^{kn}E \cong E((x^n)).
\end{equation*}

Finally, we recall from \cite{AMS98} that, whenever $E$ is a ring, one can define a multiplication on $E((x^n))$ which is compatible with the ring structure of $E$.

We start this subsection by the main result of \cite{BR17}, which is an essential ingredient here.

\begin{thm*}[{\cite[Theorem 1.1]{BR17}}]
There is a weak equivalence of spectra
$$t(\tmf)^{\G} \cong \ko((x^8))$$
where $x^8$ is a formal element in degree $8$.
\end{thm*}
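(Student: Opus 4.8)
The plan is to compute the Tate spectrum $t(\tmf)^{\G}$ directly from the Tate diagram, using the known structure of $\tmf$ away from the prime $2$ and Mahowald's model for the Tate construction at $2$. Since $\tmf$ here is the pushforward of the non-equivariant $\tmf$ with trivial $\G$-action (as in Proposition \ref{pro:tatering}), the Tate spectrum is the classical Tate construction $\tmf^{t\G}$, which fits into the homotopy pullback square
\begin{equation*}
\xymatrix{ \tmf \ar[r] \ar[d] & \tuniv \wedge \tmf \ar[d] \\ F(\univ_+,\tmf) \ar[r] & t(\tmf).}
\end{equation*}
The first reduction would be to observe that rationally (and after inverting $2$) the Tate construction vanishes, so all the content is $2$-local; thus we may work $2$-completely throughout. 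Then $t(\tmf)^\G$ is computed by the Tate spectral sequence whose $E_2$-page is $\widehat{H}^*(\G; \pi_*\tmf)$, the Tate cohomology of the cyclic group of order two with coefficients in the $2$-completed homotopy of $\tmf$ (with trivial action).

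The key technical input is Mahowald's approach (cited as \cite{Mah81,DM84,DM86}): the sphere $S^{\infty\sigma}$ carries a filtration by the $S^{n\sigma}$, and smashing with $\tmf$ and taking fixed points realizes $t(\tmf)^\G$ as a homotopy inverse limit whose associated spectral sequence is precisely the Tate spectral sequence. The first step is therefore to identify the $E_2$-page $\widehat{H}^*(\G;\pi_*\tmf^\wedge_2)$. Here one uses the known structure of $\pi_*\tmf$ at $2$ as a module — in particular the $2$-torsion classes die in Tate cohomology in a controlled way, and one identifies the surviving part with the Tate cohomology of a simpler module. I expect the outcome to be that the $E_2$-page is, after accounting for the periodicity introduced by $\widehat{H}^*(\G;\Z_2) = \F_2[u^{\pm 1}]$ with $|u|$ of topological degree $-1$ or $-2$ depending on convention, a copy of (a shifted version of) $\pi_*\ko$ made $8$-periodic by an invertible class. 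Concretely: the Tate cohomology $\widehat{H}^*(\G;\pi_*\tmf)$ should be recognizable as $\F_2[u^{\pm1}]\otimes(\text{something})$ whose structure matches the $v_1$-periodic (i.e.\ $\ko$-theoretic) part of $\tmf$, with the $8$-fold periodicity coming from the Bott-like periodicity of $\ko$ interacting with the group-cohomology periodicity. The second step is to run the spectral sequence: the differentials should be forced by multiplicativity (the pullback square gives $t(\tmf)^\G$ an $E_\infty$-ring structure by Proposition \ref{pro:tatering}), comparison with the analogous computation for $\ko$ (whose Tate spectrum $\ko^{t\G}$ is classically known), and sparseness. The third step is to assemble the additive answer $\prod_{i\in\Z}\Sigma^{8i}\ko$ into the claimed multiplicative identification $\ko((x^8))$: the ring structure on $t(\tmf)^\G$ together with the compatibility of the inverse-limit presentation with products of the $\ko((x^8))$ form (recalled in subsection \ref{sub:tmfq}) pins down the multiplication, and the formal element $x^8$ is detected by the periodicity generator $u^{\mp 8}$ of the Tate spectral sequence times the Bott class.

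The main obstacle will be the identification and running of the Tate spectral sequence: one must show that the differentials are exactly those that turn $\widehat{H}^*(\G;\pi_*\tmf)$ into the homotopy of $\ko((x^8))$ rather than something larger or smaller. This is where the genuine computational work lies, and it is precisely what \cite{BR17} carries out in detail (using the delicate analysis of $v_1$-periodic homotopy and the interplay between the $\tmf$-resolution and the Tate filtration); here we are entitled to cite it as a black box. A secondary subtlety is keeping track of the grading conventions — the excerpt's Warning about differing from \cite{HK01} — and making sure the "$x^8$ in degree $8$" bookkeeping is internally consistent, but this is routine once the additive answer is in hand. Finally, one should remark that the fixed-point functor $(-)^\G$ applied to the Tate spectrum of a pushed-forward spectrum agrees with the classical Tate construction, so that no genuinely equivariant subtlety enters beyond what is already packaged in \cite[Section I.1]{GM03} and Proposition \ref{pro:tatering}.
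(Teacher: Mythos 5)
The paper gives no proof of this statement at all: it is quoted verbatim from \cite[Theorem 1.1]{BR17} and used as an external input, so your decision to cite \cite{BR17} as a black box for the computational core is exactly what the paper itself does, and on that level the proposal is acceptable. Two cautions, though. First, your sketched strategy --- running the Tate spectral sequence with $E_2=\widehat{H}^*(\G;\pi_*\tmf)$ and forcing the differentials by multiplicativity and comparison with $\ko^{t\G}$ --- is offered as if it were an outline of the cited proof, but it is not how \cite{BR17} proceeds, and it would be genuinely hard to push through as stated: $\pi_*\tmf$ at the prime $2$ is complicated, and identifying its Tate cohomology with ``the $v_1$-periodic part made $8$-periodic'' is itself a nontrivial claim, not a bookkeeping step. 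The actual argument (the machinery of \cite{Mah81,DM84,DM86} alluded to in the introduction) works with Mahowald's inverse-limit model of the Tate construction, expressing $t(\tmf)^{\G}$ as a homotopy inverse limit of $\tmf$ smashed with stunted projective spectra, and then carries out Adams spectral sequence and $\A(2)$-/$\A(1)$-module splitting arguments in mod $2$ cohomology to produce the shifted copies of $\ko$; the ring-theoretic statement $\ko((x^8))$ is likewise part of that analysis rather than a formal consequence of Proposition \ref{pro:tatering}. Second, since everything of substance is deferred to \cite{BR17}, your proposal establishes nothing beyond the citation itself; that is fine here, but the heuristic paragraph should not be mistaken for a proof sketch of the cited theorem.
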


Note that there is not any element $x$. We hope that this does not generate too much confusion for now. The notation will be justified later, since $x^8$ comes from an equivariant class, which is an $8$th power (see Lemma \ref{lemma:definephi}).

In particular, there is a weak equivalence of $\G$-spectra
\begin{equation}
t(\tmf) \cong \tuniv \wedge \ko((x^8)),
\end{equation}
where $\ko((x^8))$ is viewed as a $\G$-equivariant spectrum through the pushforward.

\begin{de} \label{de:tmfq}
Let $\tmfq$ be the homotopy pullback
\begin{equation} \label{eq:pullbacktmfq}
\xymatrix{ \tmfq \ar[d] \ar[r] & \tuniv \wedge \ko[x^8] \ar@{^(->}[d] \\
 F(E{\G}_+,\tmf) \ar[r] & \tuniv \wedge \ko((x^8))}
\end{equation}
where the rightmost arrow is the inclusion. 
\end{de}

At this point of the discussion, the reader might be confused by the appearance of the spectrum $\tuniv \wedge \ko[x^8]$ which has not been motivated yet.
The choice of this spectrum is explained in two ways: first, this is the spectrum that should be put here in order to have the correct cohomology for $\tmfq$ (see the computation we make in section \ref{sec:cohomlogyoftmfq}). Moreover, this is the most natural guess, by analogy with the chromatically lowest  analogues known in $\G$-equivariant stable homotopy theory. Indeed, a direct consequence of the computations in \cite{HK01} is that the Tate diagram for $\hmf$ and $\hmz$ are the following pullback diagrams:

\begin{equation*}
\xymatrix{ \hmz \ar[r] \ar[d] & \tuniv \wedge H\F[x^2] \ar[d] \\
F(\univ_+ , H\Z) \ar[r] & \tuniv \wedge H\F((x^2)) }
\end{equation*}

and 

\begin{equation*}
\xymatrix{ \hmf \ar[r] \ar[d] & \tuniv \wedge H\F[x] \ar[d] \\
F(\univ_+ , H\F) \ar[r] & \tuniv \wedge H\F((x)). }
\end{equation*}

We end this section by a conjecture, on the structure of $\tmfq$. Observe that the bottom row of the diagram  \eqref{eq:pullbacktmfq} is a morphism of ring spectra as $F(E{\G}_+,\tmf)$ is a ring by Proposition \ref{pro:feqring} since $\tmf$ is one, and $t(\tmf)$ is a naive ring by Proposition \ref{pro:tatering}, and the map between them is a map of rings.

Moreover, the ring structure on the non-equivariant spectrum $\ko[x^8]$ gives rise to a natural ring structure on the $C_2$-equivariant spectrum  $\tuniv \wedge \ko[x^8]$, which is at the top right corner of \eqref{eq:pullbacktmfq}. If we knew that the rightmost map in this diagram was a map of ring spectra, then we could define $\tmfq$ as the pullback \emph{in the category of ring spectra}. Since the forgetful functor from ring spectra to spectra creates limits, this would give a ring structure on $\tmfq$.

\begin{conj} \label{pro:tmfqring}
The $\G$-spectrum $\tmfq$ is a $\G$-ring spectrum (in the naive sense).
\end{conj}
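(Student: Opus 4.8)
The plan is to upgrade the homotopy pullback of Definition \ref{de:tmfq} to a pullback in the category of naive $\G$-ring spectra, following exactly the strategy sketched in the paragraph preceding the conjecture. Since the forgetful functor from (naive) ring $\G$-spectra to $\G$-spectra creates limits, it suffices to (i) identify each of the three corners of the defining square as a ring $\G$-spectrum, (ii) show that the two maps into the bottom-right corner $\tuniv \wedge \ko((x^8))$ are maps of ring spectra, and then (iii) conclude that $\tmfq$, being the pullback computed in ring spectra, acquires a ring structure whose underlying spectrum is the homotopy pullback of \eqref{eq:pullbacktmfq}.

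For step (i) and the left-hand map: $\tmf$ is an $E_\infty$-ring spectrum, so its pushforward $i_*\tmf$ to $\shg$ is a ring $\G$-spectrum, and by Proposition \ref{pro:feqring} the spectrum $F(\univ_+, \tmf)$ is a ring $\G$-spectrum and $c(\tmf) \to t(\tmf)$ is a map of ring $\G$-spectra; moreover $t(\tmf) \cong \tuniv \wedge \ko((x^8))$ as a ring by the main theorem of \cite{BR17} together with the multiplicative structure on $\ko((x^8))$ recalled from \cite{AMS98}. For the top corner, the ring structure on $\ko[x^8]$ from \cite{AMS98} makes $\tuniv \wedge \ko[x^8]$ a ring $\G$-spectrum, since $\tuniv = S^{\infty\sign}$ is a homotopy-commutative idempotent ring $\G$-spectrum (the unit $S^0 \to \tuniv$ is a ring map and $\tuniv \wedge \tuniv \cong \tuniv$), so smashing with it is a lax monoidal endofunctor of $\shg$. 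The one genuinely new point is the right-hand vertical map $\tuniv \wedge \ko[x^8] \hookrightarrow \tuniv \wedge \ko((x^8))$: I would show that the inclusion $\ko[x^8] = \bigvee_i \Sigma^{8i}\ko \hookrightarrow \prod_i \Sigma^{8i}\ko = \ko((x^8))$ is a map of ring spectra, using the description of $\ko((x^8))$ as $\lim_i \bigvee_{k=-i}^\infty \Sigma^{8k}\ko$ (valid since $\ko$ is connective) — the inclusion is compatible with the "Laurent multiplication" of \cite{AMS98} because the product of two elements of non-negative $x$-degree again has non-negative $x$-degree. Smashing this ring map with $\tuniv$ then gives the required ring map.

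The main obstacle — and the reason this is stated as a conjecture rather than a proposition — is rigidifying these multiplicative structures at the point-set (or at least $A_\infty$/$E_\infty$-operadic) level so that the pullback may genuinely be formed in a category of structured ring spectra rather than merely in the homotopy category. The constructions $E[x^n]$ and $E((x^n))$ and their multiplications are taken from \cite{AMS98}, where they are described on homotopy or as "one can define a multiplication" — one needs these to be honest multiplicative functors, and one needs the weak equivalence $t(\tmf)^{\G} \cong \ko((x^8))$ of \cite{BR17} to be a weak equivalence of ring spectra, not just of underlying spectra, and similarly $t(\tmf) \cong \tuniv \wedge \ko((x^8))$ over $\shg$. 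If the equivalence in \cite{BR17} is only established as an equivalence of underlying spectra, then the bottom map of \eqref{eq:pullbacktmfq} cannot be promoted to a strict ring map, and the argument breaks down; resolving this requires either a multiplicative refinement of the Tate computation of \cite{BR17} or a more delicate obstruction-theoretic argument producing an $A_\infty$-structure on the homotopy pullback directly. This is precisely the gap that keeps the statement conjectural, even though $\mot(\tmfq)$ would then inherit a ring structure by Proposition \ref{pro:motandrings} and one would obtain $\tmfr$ as a ring spectrum with the cohomology computed in Theorem \ref{thm:cohtmfq}.
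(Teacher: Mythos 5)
The statement you are trying to prove is stated in the paper as a \emph{conjecture}, and the paper offers no proof of it: the paragraph immediately preceding it only sketches the strategy you have reproduced (ring structures on the corners of the square \eqref{eq:pullbacktmfq}, the fact that the forgetful functor from ring spectra to spectra creates limits, and the observation that everything would follow if the right-hand vertical map were a map of ring spectra). Your proposal elaborates exactly this sketch, and to your credit you correctly locate the missing ingredient; but as written it does not close the gap, so it is not a proof.

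Let me make the gap precise, because your formulation slightly understates it. The ring structure on the bottom-right corner that makes the bottom map $F(\univ_+,\tmf) \rightarrow t(\tmf)$ a ring map is the one on the Tate construction $t(\tmf)$ coming from Proposition \ref{pro:tatering}. The identification $t(\tmf) \cong \tuniv \wedge \ko((x^8))$ of \cite{BR17} is established only as an equivalence of underlying spectra, so there is no a priori comparison between this Tate ring structure and the ``Laurent series'' multiplication of \cite{AMS98} transported along that equivalence. Consequently, even granting your (plausible) claim that the inclusion $\ko[x^8] \hookrightarrow \ko((x^8))$ is multiplicative for the Laurent product, this does not show that the right-hand vertical map of \eqref{eq:pullbacktmfq} is a ring map \emph{for the ring structure relevant to the pullback}, namely the one on $t(\tmf)$. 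In addition, forming the pullback in a category of structured ring spectra requires rigidifying all of these homotopy-level multiplications to at least $A_\infty$ data, which neither \cite{AMS98} nor \cite{BR17} provides in the form needed. These two points --- a multiplicative refinement of the Tate computation of \cite{BR17} identifying its equivalence as one of (at least $A_\infty$) ring spectra compatibly with the inclusion $\ko[x^8] \subset \ko((x^8))$, and the point-set or operadic rigidification needed to form the pullback in ring spectra --- are exactly what keeps the statement a conjecture, so your proposal should be read as a correct reduction of the conjecture to these open steps rather than as a proof.
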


\section{Cohomology of $\tmfq$} \label{sec:cohomlogyoftmfq}

The main result of this section is Theorem \ref{thm:cohtmfq}, which computes the $\G$-equivariant cohomology of $\tmfq$. Note that the spectrum $\tmfq$ being build from known non -equivariant spectra using the Tate diagram, the determination of $\hmf^*(\tmfq)$ is essentially a computation of the $\G$-equivariant cohomology of non-equivariant spectra (with a trivial action of $\G$).

\begin{thm} \label{thm:cohtmfq}
There is an isomorphism of $\Aq$-modules
$$\hmf^{*}(\tmfq) \cong \Aq\sur\Aq(2).$$ 
\end{thm}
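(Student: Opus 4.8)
The plan is to compute $\hmf^*(\tmfq)$ directly from the homotopy pullback square \eqref{eq:pullbacktmfq} defining $\tmfq$, by running the associated Mayer--Vietoris sequence in $\hmf$-cohomology and identifying the resulting $\Aq$-module with $\Aq\sur\Aq(2)$. Since everything in sight (namely $\tmf$, $\ko$, $\ko[x^8]$, $\ko((x^8))$) is a non-equivariant spectrum pushed forward to $\shg$, the first step is to recall that for a non-equivariant spectrum $X$ with trivial $\G$-action, $\hmf^*(\tuniv\wedge X)$ and $\hmf^*(F(\univ_+,X))$ are computed from the classical cohomology $H\F^*(X)$ via the geometric-fixed-point and Tate descriptions of $\hmf$; concretely, the Hu--Kriz computation \cite{HK01} tells us $\hmf^* (\tuniv\wedge X) = \tilde H\F^*(X)[\rho^{\pm 1},\ldots]$-style modules over $\hmf_*$, and the crucial input \eqref{eq:cohtmf} is that $H\F^*(\tmf)\cong\A\sur\A(2)$ and $H\F^*(\ko)\cong\A\sur\A(1)$. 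I would assemble these into the $\hmf^*$-cohomology of each of the three corners of \eqref{eq:pullbacktmfq}.

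Next I would set up the long exact Mayer--Vietoris sequence
\begin{equation*}
\cdots \to \hmf^{*-1}\!\big(\tuniv\wedge\ko((x^8))\big) \to \hmf^*(\tmfq) \to \hmf^*\!\big(F(\univ_+,\tmf)\big)\oplus \hmf^*\!\big(\tuniv\wedge\ko[x^8]\big) \to \hmf^*\!\big(\tuniv\wedge\ko((x^8))\big)\to\cdots
\end{equation*}
and analyze the connecting map. The key algebraic point should be that the inclusion $\ko[x^8]\hookrightarrow\ko((x^8))$ induces, after smashing with $\tuniv$ and taking $\hmf$-cohomology, a surjection whose kernel accounts for exactly the ``negative part'' $\prod_{i<0}\Sigma^{8i}\ko$; dually, the map $F(\univ_+,\tmf)\to t(\tmf)\simeq\tuniv\wedge\ko((x^8))$ realizes the Tate construction. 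So the Mayer--Vietoris sequence should collapse to a short exact sequence of $\Aq$-modules expressing $\hmf^*(\tmfq)$ as built from $\hmf^*$ of the homotopy fiber of $\tmf\to\tuniv\wedge\ko[x^8]$-type pieces. The target $\Aq\sur\Aq(2)$ I would recognize using $\Aq\sur\Aq(2)\cong\hmf_*\otimes_{\F}(\A\sur\A(2))$ up to the Hu--Kriz twist together with Remark \ref{rk:steenrodalgebras}; and the classical fact that the $\A$-module $\A\sur\A(2)$ fits into an exact sequence relating it to $\A\sur\A(1)$-modules (coming precisely from the classical Tate square for $\tmf$, i.e.\ $t(\tmf)\simeq\ko((x^8))$, proved in \cite{BR17}). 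In other words, the equivariant computation is the ``$\hmf_*$-linearization'' of the classical identity that the cohomology of the classical Tate square for $\tmf$ glues $\A\sur\A(1)$-pieces into $\A\sur\A(2)$.

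Concretely the steps, in order, are: (1) compute $\hmf^*$ of each corner of \eqref{eq:pullbacktmfq} in terms of $H\F^*(\tmf)=\A\sur\A(2)$ and $H\F^*(\ko)=\A\sur\A(1)$, using the Hu--Kriz structure of $\hmf$ on pushed-forward spectra (split into the $\M$-part and the $\nor D\M$-part of $\hmf_*$); (2) identify the map induced by $\ko[x^8]\hookrightarrow\ko((x^8))$ and show the relevant part of the Mayer--Vietoris connecting homomorphism vanishes, so that $\hmf^*(\tmfq)$ sits in a short exact sequence of $\Aq$-modules; (3) match that short exact sequence with the one for $\Aq\sur\Aq(2)$, reducing (via Remark \ref{rk:steenrodalgebras}) to the classical statement that the cohomology of the Tate square for $\tmf$ recovers $\A\sur\A(2)$, which is the content underlying \cite{BR17}; (4) check that the isomorphism is $\Aq$-linear, not just $\hmf_*$-linear, by naturality of all the maps in $\hmf$.

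The main obstacle I expect is step (2)--(3): controlling the $\Aq$-module extension problem. The Mayer--Vietoris sequence a priori only gives $\hmf^*(\tmfq)$ as an extension, and one must rule out nontrivial extensions and, more seriously, verify that the gluing in the Tate square is exactly the one that produces $\A\sur\A(2)$ rather than some other $\A$-module with the same associated graded. This is precisely where the choice of $\tuniv\wedge\ko[x^8]$ (as opposed to some other filtration piece) is forced, and where the degree bookkeeping for the powers $x^8$ — matching the internal degree $8$ with the $Sq^8$-type attaching maps in $\A\sur\A(2)$ over $\A\sur\A(1)$ — has to be done carefully. I would handle it by transporting the known classical computation of the cohomology of the classical Tate square for $\tmf$ (equivalently, the $\A$-module structure of $H\F^*$ of the classical pullback defining $\tmf$ from $\ko[x^8]$ and $\ko((x^8))$) across the extension of scalars $\A\rightsquigarrow\Aq$, rather than redoing the attaching-map analysis equivariantly from scratch.
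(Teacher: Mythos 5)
Your overall skeleton (compute the corners of the defining square \eqref{eq:pullbacktmfq}, run a long exact sequence, identify the result) is the same as the paper's, but the way you propose to close the argument in steps (2)--(4) has a genuine gap. The ``classical statement that the cohomology of the Tate square for $\tmf$ recovers $\A\sur\A(2)$'', which you want to transport across the extension of scalars $\A \to \Aq$, is not available: \cite{BR17} supplies only the equivalence of spectra $t(\tmf)\simeq \ko((x^8))$, and there is no non-equivariant pullback square presenting $\tmf$ from $\ko[x^8]$ and $\ko((x^8))$ whose cohomology you could quote --- the gluing statement is precisely what Theorem \ref{thm:cohtmfq} asserts, and it is inherently equivariant. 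Moreover, even if such a classical identification existed, a formal base change would not determine the $\Aq$-module structure of the middle term, because the comparison map $\comp\colon \A_*\to\Aq_*$ is a \emph{twisted} extension of scalars: by Theorem \ref{thm:comp}, $\comp(\pxi_i)$ is $\rho^{2^i-1}\xi_i$ only up to correction terms involving $\tau$, $\rho$ and the $\tau_j$'s, and it is exactly this formula (after inverting $\rho$, Lemma \ref{lemma:definephi} and Proposition \ref{pro:koxeight}) that the paper needs to identify the geometric corner. So your main obstacle --- the extension problem --- is real, but the proposed resolution is circular.

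The missing ingredient is a spectrum-level comparison that settles the identification naturally: the paper maps the whole Tate diagram of $\tmfq$ to the Tate diagram of $\hmf$ (Lemma \ref{lemma:tmfqorientation}), using $\tmf\to H\F$, $\ko\to H\F$ and the compatibility of $t(\tmf)\simeq\ko((x^8))$ with $t(H\F)\simeq H\F((x))$; the induced map of long exact sequences in $\hmf_*$-homology, together with the already-computed outer terms and the $5$-lemma, yields $\hmf_*(\tmfq)\cong(\Aq\sur\Aq(2))_*$ with its comodule structure, resolving the extension in one stroke. Without such a map, your Mayer--Vietoris sequence only exhibits $\hmf^*(\tmfq)$ as an extension of known pieces. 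Two further practical points: the paper works with the isotropy cofiber sequence $\univ_+\wedge\tmfq\to\tmfq\to\tuniv\wedge\tmfq$, replacing the Borel corner $F(\univ_+,\tmf)$ by its fiber $\univ_+\wedge\tmf$ (Lemmas \ref{lemma:eqtmf} and \ref{lemma:eqx}), which avoids your harder direct computation of $\hmf^*(F(\univ_+,\tmf))$; and your sequence also requires the cohomology of the infinite product $\tuniv\wedge\ko((x^8))$, which carries convergence issues that the paper's fiber-based argument never has to confront.
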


The proof of this theorem is an inspection on the defining pullback square of Definition \ref{de:tmfq}. We start by identifying the upper right corner in terms of the dual equivariant Steenrod algebra.

To study of the cohomology of any spectrum by its Tate diagram we should first understand the equivariant cohomology of $\tuniv \wedge X$ in terms of the non equivariant cohomology of $X$.

Let's first determine what happens at the level of the dual Steenrod algebras. 

\begin{nota}
Let $\zeta_i$ and $\theta_i$ be the conjugate of the elements $\xi_i$ and $\tau_i$ of the $\G$-equivariant dual Steenrod algebra.
Similarly, $\pz_i$ will denote the conjugate of $\pxi_i$ in the classical dual Steenrod algebra.
\end{nota}

\begin{lemma}\label{lemma:definephi}
Smashing with $\tuniv$ induces a map of geometric Hopf algebroids $$\hmf \wedge \hmf \rightarrow \tuniv \wedge H\F \wedge H\F[x,y].$$
In homotopy, this is the Hopf algebroid map 
$$ \Phi_{\A} :\hmf_{*}[\theta_i,\zeta_{i+1}]/(\theta_i^2 + \rho \theta_{i+1} +  \tau \zeta_{i+1}) \rightarrow \hmf_{*}[\rho^{- 1},\tau,x][\pz_i]$$
defined on generators by $\Phi_{\A}(\theta_0) = x$, and
\begin{equation} \Phi_{\A}(\zeta_i) = \rho^{2^i-1}\pz_i + \sum_{i=1}^{n} \rho^{2^n - 2^i} \eta_R(\tau)^{2^{i-1}-1} \xi_{n-i}^{2^{i-1}} \tau_{n-1}.
\end{equation} Here, $x := \eta_R(\rho^{-1} \tau) +\rho^{-1} \tau$.
\end{lemma}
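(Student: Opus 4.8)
The statement is really about how the geometric fixed point / Tate construction interacts with the cooperations of $H\F$. The plan is to produce the algebra map on the level of spectra first, and then read off its effect in homotopy by comparing with the known structure maps of the dual Steenrod algebras from Proposition \ref{pro:steenrod}. First I would recall that $\tuniv$ is a commutative ring $\G$-spectrum (it is $S^{\infty\sigma}$, a filtered colimit of representation spheres), so smashing with it is lax monoidal; applying it to the multiplication $\hmf\wedge\hmf$ and then using the natural map $\tuniv\wedge\hmf \to \tuniv\wedge i_*(i^*\hmf) \cong \tuniv\wedge H\F$ (the ``geometric fixed points localize away $\rho$'' phenomenon, i.e. $\pi_*^{\G}(\tuniv\wedge\hmf) = \hfr_*[\rho^{-1}]$) produces the desired map of ring spectra $\hmf\wedge\hmf \to \tuniv\wedge H\F\wedge H\F[x,y]$. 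The polynomial variables $x,y$ appear because the two copies of $H\F$ carry the two distinct $\G$-equivariant refinements coming from the left and right unit, and each $\theta_0$-class gets sent to a polynomial generator under $\rho$-inversion.

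\textbf{Key steps.} The main computation is the formula for $\Phi_{\A}(\zeta_i)$. Here I would argue as follows. On $\pi_*^{\G}$, inverting $\rho$ sends $\hmf_* = \M\oplus\nor D\M$ to $\M[\rho^{-1}] = \F[\rho^{\pm 1},\tau]$ (the square-zero part dies after inverting $\rho$, since $\nor D\M$ is $\rho$-torsion — this is exactly the computation underlying Lemma \ref{lemma:torcomputation}). So the target of $\Phi_{\A}$ is $\hmf_*[\rho^{-1},\tau,x][\pz_i]$, and I must track where the Hu--Kriz generators $\theta_i,\zeta_i$ of $\Aq_*$ go. The relation $\theta_i^2 + \rho\theta_i + \eta_R(\tau)\xi_{i+1} = 0$ (equivalently, with $\zeta,\theta$ the conjugates, $\theta_i^2 + \rho\theta_{i+1} + \tau\zeta_{i+1}=0$, which is why the odd formula above pairs $\rho\theta_{i+1}$ with $\tau\zeta_{i+1}$) forces, after inverting $\rho$, that $\zeta_{i+1}$ is determined by the $\theta$'s. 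Starting from $\Phi_{\A}(\theta_0) = x := \eta_R(\rho^{-1}\tau) + \rho^{-1}\tau$ — which is the image of $\theta_0$ under the canonical identification of the Tate-localized equivariant dual Steenrod algebra, using $\eta_R(\tau) = \rho\tau_0 + \tau$ so that $\rho^{-1}\eta_R(\tau) = \tau_0 + \rho^{-1}\tau$ and $x$ is the ``$\tau_0$ part'' — I would solve the relations inductively in $i$. At each stage the classical generator $\pz_i$ enters through the comparison $\re(\hfr) \simeq \hmf$ of Proposition \ref{pro:rehfr}, which identifies the non-equivariant $\pz_i$ (degree $2^i-1$, doubled to $(2(2^i-1),2^i-1)$ motivically) with the appropriate piece, and the $\rho$-power $\rho^{2^i-1}$ is forced by weight/degree bookkeeping: $\zeta_i$ has bidegree $(2(2^i-1),2^i-1)$ while $\pz_i$ (classically regraded) sits in a different weight, and $\rho^{2^i-1}$ has bidegree $(-(2^i-1),-(2^i-1))$, which balances the first coordinate. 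The remaining ``correction terms'' $\sum \rho^{2^n-2^i}\eta_R(\tau)^{2^{i-1}-1}\xi_{n-i}^{2^{i-1}}\tau_{n-1}$ come from expanding the conjugate/antipode relations $\Delta(\xi_i) = \sum \xi_{i-k}^{2^k}\otimes\xi_k$, $\Delta(\tau_i) = \tau_i\otimes 1 + \sum\xi_{i-k}^{2^k}\otimes\tau_k$ after passing to conjugates and inverting $\rho$ — i.e. this is precisely the antipode formula rewritten in the Tate-localized Hopf algebroid.

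\textbf{Main obstacle.} The genuinely delicate part is not the existence of the spectrum-level map but verifying the explicit closed formula for $\Phi_{\A}(\zeta_i)$, and in particular getting the exponents of $\rho$ and $\eta_R(\tau)$ right. I expect the cleanest route is an induction on $i$ using the defining quadratic relations $\theta_i^2 + \rho\theta_{i+1} + \tau\zeta_{i+1} = 0$ together with the coproduct formulas, rather than a direct frontal assault; this converts the problem into a finite check at each stage that the proposed formula is the unique solution of the relations compatible with $\Phi_{\A}(\theta_0) = x$ and with the classical map $H\F\wedge H\F \to H\F\wedge H\F$ under Betti realization (Proposition \ref{pro:rehfr}). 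A secondary point to be careful about is that $\Phi_{\A}$ as written lands in $\hmf_*[\rho^{-1},\tau,x][\pz_i]$ and not obviously in a sub-Hopf-algebroid; but since we only claim it is a map of Hopf algebroids into the ``geometric'' target $\tuniv\wedge H\F\wedge H\F[x,y]$, whose homotopy is exactly this ring, no further identification is needed — one just checks compatibility with the two coproducts, which again reduces to the antipode identities above.
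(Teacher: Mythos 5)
Your global picture is right---smashing with $\tuniv$ inverts $\rho$, the negative cone of $\hmf_*$ dies, and the target is the classical dual Steenrod algebra over $\F[\rho^{\pm1},\tau]$ with a polynomial variable contributed by each copy of $\hmf$---but the actual content of the lemma, the closed formula for $\Phi_{\A}(\zeta_n)$, is not established by your outline, and the mechanism you propose does not close. After inverting $\rho$, the quadratic relation $\theta_i^2=\rho\theta_{i+1}+\tau\zeta_{i+1}$ lets you solve for $\theta_{i+1}$ in terms of $\theta_i$ and $\zeta_{i+1}$ (or, using $\tau$-torsion-freeness, for $\zeta_{i+1}$ in terms of $\theta_i$ \emph{and} $\theta_{i+1}$); it does not determine the $\zeta$'s from $\theta_0$ alone, and the only datum you start from is $\Phi_{\A}(\theta_0)=x$. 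So your induction needs an independent determination of the images of all the $\theta_i$'s (equivalently of the $\tau_i$'s), and that is precisely the geometric input the paper supplies and your sketch never produces: the coaction on $\hmf^*(B'\Z/2)$ yields $\eta_R(\tau)\comp(\pxi_i)=\tau^{2^{i-1}}\tau_{i-1}+\rho^{2^i}\tau_i$ (Theorem \ref{thm:comp}), and the closed formula is then extracted by the conjugation computation of Lemma \ref{lemma:technicalcomputation}; the paper's proof of the present lemma is literally ``Theorem \ref{thm:comp} after $\rho$ is inverted.'' Degree bookkeeping cannot substitute for this: once $\rho^{\pm1}$, $\tau$ and $x$ are in play, a fixed bidegree contains many monomials, so the exponents $\rho^{2^n-2^i}$ and $\eta_R(\tau)^{2^{i-1}-1}$ are in no way ``forced,'' and the claim that the correction terms are ``the antipode formula rewritten'' is asserted rather than proved.

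A second, related problem is the anchor you use to identify the classical generators $\pz_i$ inside the localized algebra. Proposition \ref{pro:rehfr} ($\re\hfr\simeq\hmf$) compares the motivic and equivariant Eilenberg--MacLane spectra and plays no role here; the relevant comparison is the non-equivariant-to-equivariant map $\epsilon\colon i_*H\F\to\hmf$ of the appendix (equivalently, the identification $\tuniv\wedge\hmf\simeq\tuniv\wedge H\F[x]$ coming from the Tate diagram of $\hmf$), which is what gives the symbols $\pz_i$ in the target their meaning. Note also that your intermediate claim $\tuniv\wedge\hmf\simeq\tuniv\wedge H\F$ is false for the same reason: the geometric fixed points of $\hmf$ are $H\F[x]$, not $H\F$---your later remark that each $\theta_0$-class becomes a polynomial variable is the correct statement. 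In short, your plan could be completed only by adding exactly the computation that constitutes the lemma's proof in the paper (Theorems \ref{thm:twistedext} and \ref{thm:comp} together with Lemma \ref{lemma:technicalcomputation}), or by carrying out in full a coproduct-based induction with the correct comparison map as anchor; as written, the formula is not derived.
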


\begin{proof}
This is precisely the general formula given in Theorem \ref{thm:comp} after $\rho$ is inverted.
\end{proof}

\begin{lemma} \label{lemma:coactiongeometricfixed}
Let $X \in \sh$. There is an isomorphism of $\hmf_{*}$-modules
$$ \hmf_{*}(\tuniv \wedge X) \cong \F[\tau,\rho^{\pm1}] \otimes_{\F} H\F^*(X).$$
Moreover, the coaction of the dual equivariant Steenrod algebra is entirely determined by the map
$$ \Phi_{\A} : \Aq[\rho^{- 1}] \rightarrow (\Phi^\G( \hmf \wedge \hmf))_{*} = \F[\tau,\rho^{\pm 1},x] \otimes_{\F} \A.$$
\end{lemma}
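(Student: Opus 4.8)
\textbf{Proof proposal for Lemma \ref{lemma:coactiongeometricfixed}.}

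The plan is to deduce both the module isomorphism and the description of the coaction from the behaviour of $\tuniv\wedge(-)$ on the geometric fixed points, reducing everything to the non-equivariant computation. First I would recall that for a non-equivariant spectrum $X$ pushed forward to $\shg$, smashing with $\tuniv$ inverts the Euler class $\rho$, so that $\tuniv\wedge\hmf$ is the geometric fixed-point model and $\pi_*(\tuniv\wedge\hmf)\cong\F[\tau,\rho^{\pm1}]$ by Proposition \ref{pro:coeffring} (the $\kappa D\M$ summand of $\hmf_*$ is $\rho$-torsion and hence dies after inverting $\rho$). Since $X$ carries the trivial action, there is a natural identification $\tuniv\wedge X\wedge\hmf\cong(\tuniv\wedge\hmf)\wedge X$ of $\tuniv\wedge\hmf$-modules, and taking homotopy groups we get
$$\hmf_*(\tuniv\wedge X)\cong\pi_*(\tuniv\wedge\hmf)\otimes_{\F}H\F^{-*}(X)\cong\F[\tau,\rho^{\pm1}]\otimes_{\F}H\F^*(X),$$
using that $H\F_*$ of a trivial-action spectrum agrees with classical homology after forgetting to the category of $\F$-vector spaces, together with a finiteness/duality argument to write it via $H\F^*(X)$. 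I would spell out carefully that the relevant K\"unneth isomorphism holds because $\F[\tau,\rho^{\pm1}]$ is a graded field-like coefficient ring (it is a localization of the polynomial ring over which the relevant homology is a flat, indeed free, module), so there are no higher $\tor$-terms.

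Next, to pin down the coaction, I would observe that the $\Aq_*$-comodule structure on $\hmf_*(\tuniv\wedge X)$ is, by naturality of the coaction map, obtained from the coaction on $\hmf_*(\tuniv\wedge H\F)$ (the universal case $X=H\F$) by base change along $H\F^*(X)$. The coaction on $\hmf_*(\tuniv\wedge H\F)$ is exactly the composite of $\psi\colon\hmf\wedge H\F\to\hmf\wedge\hmf\wedge H\F$ with $\tuniv\wedge(-)$, whose effect on homotopy is the map $\Phi_\A$ of Lemma \ref{lemma:definephi} (after inverting $\rho$, so that the target is the geometric fixed points $(\Phi^\G(\hmf\wedge\hmf))_*=\F[\tau,\rho^{\pm1},x]\otimes_\F\A$). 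Thus the coaction is completely determined by $\Phi_\A$ together with the classical $\A$-comodule structure on $H\F^*(X)$, which is the assertion of the lemma.

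The main obstacle I expect is bookkeeping rather than conceptual: one must be careful that the map $\Phi^\G$ on the dual Steenrod algebra really is computed by Lemma \ref{lemma:definephi}, i.e.\ that smashing with $\tuniv$ corresponds precisely to inverting $\rho$ on the equivariant dual Steenrod algebra $\Aq_*$ and that no new generators appear in $(\Phi^\G(\hmf\wedge\hmf))_*$ beyond $x$ and the classical $\pz_i$ (equivalently $\xi_i$); this is where the explicit relation $\theta_i^2+\rho\theta_i+\eta_R(\tau)\zeta_{i+1}$ from Proposition \ref{pro:steenrod} must be used, since after inverting $\rho$ it allows one to solve for the $\theta_i$ in terms of $x$ and lower classes, and one must check the count of generators matches $\F[\tau,\rho^{\pm1},x]\otimes_\F\A$. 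The second, milder, point is making the K\"unneth identification natural in $X$ so that the base-change description of the coaction is legitimate; this follows from functoriality of the smash product and the fact that $X$ has trivial action, but should be stated explicitly.
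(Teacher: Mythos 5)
Your outline is correct in substance and runs along the same underlying mathematics as the paper, but the paper packages it differently: its proof of this lemma is a one-line reduction to the appendix result, Theorem \ref{thm:twistedext}, which identifies $\hmf_*(i_*X)\cong \hmf_*\ootimes H\F_*(X)$ as $\Aq_*$-comodules (the twisted extension of scalars along the map $\comp:\A_*\to\Aq_*$ of Theorem \ref{thm:comp}), and then inverts $\rho$; smashing with $\tuniv$ kills the $\rho$-torsion negative cone, giving the coefficients $\F[\tau,\rho^{\pm1}]$ and turning $\comp$ into the map $\Phi_{\A}$ of Lemma \ref{lemma:definephi}. Your first half (the module isomorphism) is a direct re-derivation of the input the paper takes from Lemma \ref{lemma:extensionofscalars}: note that the clean way to see the K\"unneth collapse is not that $\F[\tau,\rho^{\pm1}]$ is ``field-like'' (it is not a graded field, $\tau$ being non-invertible), but that $\hmf\wedge i_*X$ is a wedge of suspensions of $\hmf$ since $X$ is built from trivial cells, so $(\tuniv\wedge\hmf)\wedge_{\hmf}(\hmf\wedge i_*X)$ is a wedge of suspensions of $\tuniv\wedge\hmf$, whence the tensor formula; the passage from $H\F_*(X)$ to $H\F^*(X)$ needs a finite-type hypothesis, but this looseness is already in the statement and is harmless for the spectra to which the lemma is applied.

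The one step you should expand is the identification of the coaction: ``by naturality of the coaction map, obtained from the universal case $X=H\F$ by base change'' is not literally naturality in $X$, and as stated it does not yet prove that the $\Aq_*$-coaction on the extended module is the classical coaction twisted through $\Phi_{\A}$. What makes this work is the ring map $\epsilon:i_*H\F\to\hmf$ (adjoint to $(\hmf)^{\G}\simeq H\F$) and the commutative diagram comparing the insertion of the unit of $H\F$ into $H\F\wedge X$ with the insertion of the unit of $\hmf$ into $\hmf\wedge i_*X$, i.e. exactly the diagram in the proof of Theorem \ref{thm:twistedext}; your argument should either reproduce that diagram after smashing with $\tuniv$, or cite that theorem and localize. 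Once that is in place, your remaining concern (that no generators beyond $x$ and the classical $\pz_i$ survive in $(\Phi^{\G}(\hmf\wedge\hmf))_*$, using $\theta_i^2+\rho\theta_{i+1}+\tau\zeta_{i+1}=0$ with $\rho$ invertible) is handled by the explicit formula for $\comp$ in Theorem \ref{thm:comp}, which is where the paper does that bookkeeping.
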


\begin{proof}
This comes from Theorem \ref{thm:twistedext} after $\rho$ is inverted.
\end{proof}

\begin{pro} \label{pro:koxeight}
As a module over the equivariant Steenrod algebra, $$\hmf_{*}(\ko[x^8]) \cong \hmf_{*}[\rho^{-1}][\theta_3,\theta_4,\hdots, \zeta_1^4, \zeta_2^2, \zeta_3,\hdots]/(\theta_i^2 + \rho \theta_{i+1} +  \tau \zeta_{i+1}),$$
whose dual is $\Aq\sur\Aq(2)[\rho^{- 1}]$.
\end{pro}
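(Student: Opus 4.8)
The plan is to compute $\hmf_*(\ko[x^8])$ by combining the non-equivariant computation of $H\F_*(\ko)$ with Lemma \ref{lemma:coactiongeometricfixed}, which reduces the equivariant cohomology of any Borel-completed spectrum $\tuniv \wedge X$ to the non-equivariant cohomology of $X$ together with the twisted coaction map $\Phi_{\A}$. Recall that $H\F^*(\ko) \cong \A\sur\A(1)$ classically, so dually $H\F_*(\ko)$ is the subalgebra $\F[\pz_1^4, \pz_2^2, \pz_3, \pz_4, \dots] \subseteq \A_*$ (this is the standard description of $(\A\sur\A(1))_*$ as a quotient comodule algebra, presented via conjugate generators). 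First I would recall $H\F_*(\ko[x^8]) \cong H\F_*(\ko)[x^8]$ as a comodule algebra, where $x^8$ is a polynomial generator carrying the trivial coaction; this is immediate from the wedge decomposition $\ko[x^8] = \bigvee_i \Sigma^{8i}\ko$ and the additivity of homology, with the algebra structure coming from the ring structure on $\ko[x^8]$.

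Next I would feed this into Lemma \ref{lemma:coactiongeometricfixed}: as an $\hmf_*$-module,
$$\hmf_*(\tuniv \wedge \ko[x^8]) \cong \F[\tau,\rho^{\pm 1}] \otimes_\F H\F^*(\ko[x^8]),$$
and the coaction is governed by $\Phi_{\A} \colon \Aq[\rho^{-1}] \to \F[\tau,\rho^{\pm 1},x]\otimes_\F \A$ from Lemma \ref{lemma:definephi}. The formula for $\Phi_{\A}$ sends $\theta_0 \mapsto x$ and $\zeta_i \mapsto \rho^{2^i-1}\pz_i + (\text{lower }\rho\text{-order correction terms})$. The key point is that after inverting $\rho$, the change of variables is triangular: $\zeta_i$ maps to $\rho^{2^i-1}\pz_i$ plus terms already accounted for, so that $\F[\tau,\rho^{\pm 1}][\zeta_1,\zeta_2,\dots]$ and $\F[\tau,\rho^{\pm 1}][\pz_1,\pz_2,\dots]$ agree as subalgebras of $\F[\tau,\rho^{\pm 1},x]\otimes_\F\A$, and similarly $\theta_i$ is carried to a polynomial generator matching the $x$-variable and the relation $\theta_i^2 + \rho\theta_{i+1} + \tau\zeta_{i+1} = 0$. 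Tracking the relation shows $\theta_0 \leftrightarrow x$, and then the relations recursively force $\theta_1, \theta_2$ to be expressible in terms of $x$ (and $\rho,\tau,\zeta_{i}$), which is exactly the statement that $\theta_0,\theta_1,\theta_2$ and $\zeta_1,\zeta_2$ are eliminated — leaving precisely the subalgebra generated by $\theta_3, \theta_4,\dots$ and $\zeta_1^4, \zeta_2^2, \zeta_3, \zeta_4, \dots$ modulo the remaining relations. This matches the description of $(\Aq\sur\Aq(2))_*$: indeed $\Aq(2)_*$ is the quotient Hopf algebroid with $\theta_0,\theta_1,\theta_2,\zeta_1,\zeta_2$ truncated appropriately (specifically $\zeta_1^4 = \zeta_2^2 = \zeta_3 = \dots = 0$ and $\theta_3=\theta_4=\dots=0$ in $\Aq(2)_*$), so $\Aq\sur\Aq(2)_*$ is exactly the sub-comodule-algebra of $\Aq_*$ on those classes, and inverting $\rho$ gives the asserted $\Aq\sur\Aq(2)[\rho^{-1}]$.

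Concretely I would organize the argument as: (1) identify $H\F_*(\ko[x^8])$ with $\F[\pz_1^4,\pz_2^2,\pz_3,\dots][x^8]$ as a comodule algebra; (2) apply Lemma \ref{lemma:coactiongeometricfixed} to get the $\hmf_*$-module $\F[\tau,\rho^{\pm 1}] \otimes_\F H\F^*(\ko[x^8])$ with coaction determined by $\Phi_{\A}$; (3) perform the triangular change of variables dictated by the formula in Lemma \ref{lemma:definephi} to rewrite this algebra over $\hmf_*[\rho^{-1}]$ in terms of the $\theta_i$ and $\zeta_i$, identifying the element "$x^8$" of step (1) with the equivariantly-meaningful class $\theta_0^8$ (here is where the notation $x^8$ is finally justified, since $x = \Phi_{\A}(\theta_0)$); (4) dualize, matching the resulting Hopf algebroid with $\Aq\sur\Aq(2)[\rho^{-1}]$ by comparing generators and relations against the known presentation of $\Aq(2)$ from \cite{Ric1}. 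The main obstacle I anticipate is step (3): keeping careful track of which $\theta_i$ and $\zeta_i$ survive and verifying that the lower-$\rho$-order correction terms in $\Phi_{\A}(\zeta_i)$ do not disturb the clean subalgebra description — this requires checking that the relation $\theta_i^2 + \rho\theta_{i+1} + \tau\zeta_{i+1}=0$ interacts correctly with the non-equivariant generators $\pz_1^4,\pz_2^2,\pz_3,\dots$, i.e. that precisely $\theta_0,\theta_1,\theta_2$ and $\zeta_1,\zeta_2$ (in the truncated form $\zeta_1^4,\zeta_2^2$) are "used up" by $H\F_*(\ko)$ and the $x^8$ generator, leaving the claimed quotient. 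The bidegree bookkeeping — $|\zeta_i| = (2(2^i-1),2^i-1)$, $|\theta_i| = (2(2^i-1)+1,2^i-1)$, $|x^8| = (8,4)$ — provides a useful consistency check throughout.
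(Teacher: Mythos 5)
Your proposal is correct and follows essentially the same route as the paper: the paper's proof is a terse version of your steps (1)--(4), deducing the result from Lemma \ref{lemma:coactiongeometricfixed} together with the explicit formula of Lemma \ref{lemma:definephi}, the key point being exactly the triangularity you identify (the leading term of $\comp(\pxi_i)$ is $\rho^{2^i-1}\xi_i$, so the change of variables is invertible after inverting $\rho$). Your expanded bookkeeping in step (3), including the identification of $x^8$ with the image of $\theta_0^8$, is precisely the "formal consequence" the paper leaves implicit.
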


\begin{proof}
This is a formal consequence of Lemma \ref{lemma:coactiongeometricfixed} and the explicit formulae for $\Phi_{\A}$ given in Lemma \ref{lemma:definephi}, as the leading term in the polynomial $\comp(\pxi_i)$ is $\rho^{2^i-1} \xi_i$.
\end{proof}

We now turn to the other terms of the Tate diagram. It turns out the easiest way to do it is to compute the fiber of the bottom row in the Tate pullback, because of the following fact.

\begin{lemma}\label{lemma:eqtmf}
The homotopy fiber of the map
$$ F(\univ_+, \tmf) \rightarrow \ko((x^8))$$
is $\univ_+\wedge \tmf$.
\end{lemma}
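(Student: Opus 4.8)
The plan is to identify the homotopy fiber of $F(\univ_+,\tmf) \to \ko((x^8))$ by comparing the relevant cofiber sequences coming from the Tate diagram. Recall from the construction that $t(\tmf) = \tuniv \wedge F(\univ_+,\tmf)$ and that, by the main theorem of \cite{BR17}, the map $F(\univ_+,\tmf) \to t(\tmf)$ is (after passing to the appropriate model) the natural map whose underlying non-equivariant content is the inclusion $\ko \to \ko((x^8))$ in the sense that $t(\tmf) \cong \tuniv \wedge \ko((x^8))$. First I would set up the cofiber sequence
$$ \univ_+ \wedge F(\univ_+,\tmf) \rightarrow F(\univ_+,\tmf) \rightarrow \tuniv \wedge F(\univ_+,\tmf) $$
which is the bottom row of the Tate diagram \eqref{eq:tate} with $E = \tmf$; its rightmost term is precisely $t(\tmf) \cong \tuniv \wedge \ko((x^8))$.

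Next I would observe that, by Greenlees--May (the leftmost vertical arrow in \eqref{eq:tate} is a weak equivalence), there is a natural equivalence $\univ_+ \wedge F(\univ_+,\tmf) \cong \univ_+ \wedge \tmf$. Thus the cofiber sequence above reads
$$ \univ_+ \wedge \tmf \rightarrow F(\univ_+,\tmf) \rightarrow \tuniv \wedge \ko((x^8)). $$
Rotating this triangle, the homotopy fiber of the map $F(\univ_+,\tmf) \to \tuniv \wedge \ko((x^8))$ is $\univ_+ \wedge \tmf$. It then remains to match the map $F(\univ_+,\tmf) \to \tuniv \wedge \ko((x^8))$ appearing here with the map $F(\univ_+,\tmf) \to \ko((x^8))$ in the statement; here one uses that, since $\univ_+ \wedge \ko((x^8)) \simeq \univ_+ \wedge \tmf$ is already accounted for on the fiber side, smashing a non-equivariantly-induced spectrum $\ko((x^8))$ with $\tuniv$ versus leaving it as the naive $\ko((x^8))$ does not change the fiber of the canonical map out of $F(\univ_+,\tmf)$ — more precisely, the composite $F(\univ_+,\tmf) \to t(\tmf) \to \tuniv\wedge\ko((x^8))$ and the given map differ by the equivalence of \cite{BR17}, and smashing the target with $\tuniv$ is, up to the identification $\tuniv \wedge \univ_+ \simeq *$, compatible with the naive map.

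The main obstacle I expect is the bookkeeping in this last identification: one must be careful about which version of $\ko((x^8))$ (the naive $\G$-spectrum via $i_*$, versus $\tuniv \wedge \ko((x^8))$) is meant in the statement, and verify that the map in the statement is indeed (a model for) the edge map out of $F(\univ_+,\tmf)$ in the Tate diagram rather than some other map to $\ko((x^8))$. Once the maps are correctly identified, the conclusion is immediate from the cofiber sequence and the Greenlees--May equivalence. I would therefore spend most of the write-up carefully naming the maps, invoking \cite[Theorem 1.1]{BR17} to identify $t(\tmf)$, and then citing \cite[Section I.1]{GM03} for $\univ_+ \wedge F(\univ_+,\tmf) \simeq \univ_+ \wedge \tmf$, after which the fiber computation is a one-line rotation of a triangle.
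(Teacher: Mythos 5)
Your proposal is correct and follows essentially the same route as the paper: the paper's proof simply observes that the map in the statement is, by definition, the bottom edge map of the Tate diagram for $\tmf$ (with trivial action, and with $t(\tmf)$ identified via \cite[Theorem 1.1]{BR17}), so the fiber is $\univ_+ \wedge \tmf$ by the Greenlees--May equivalence $\univ_+ \wedge F(\univ_+,\tmf) \simeq \univ_+ \wedge \tmf$. Your extra care about whether the target is $\ko((x^8))$ or $\tuniv \wedge \ko((x^8))$ is exactly the notational point the paper glosses over, and your resolution (the map is the Tate edge map by construction) matches the paper's one-line justification.
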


\begin{proof}
This is by definition of our map $ F(\univ_+, \tmf) \rightarrow \ko((x^8))$ (this is the same as the one appearing in the Tate diagram for $\tmf$ together with a trivial $\G$-action.
\end{proof}

\begin{lemma} \label{lemma:eqx}
There is an isomorphism of comodules over the dual equivariant Steenrod algebra
$$ \hmf_{*}(\univ_+ \wedge \tmf) \cong (\univ_+ \wedge \hmf)_{*}[\theta_3,\theta_4,\hdots, \zeta_1^4, \zeta_2^2, \zeta_3,\hdots]/(\theta_i^2 + \rho \theta_{i+1} +  \tau \zeta_{i+1}).$$
\end{lemma}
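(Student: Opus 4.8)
The plan is to compute $\hmf_{*}(\univ_+ \wedge \tmf)$ via the homotopy orbit spectral sequence, or more directly by observing that $\univ_+ \wedge \tmf$ sits in the Tate diagram \eqref{eq:tate} for $\tmf$, and exploiting the weak equivalence $\univ_+ \wedge \tmf \cong \univ_+ \wedge F(\univ_+, \tmf) = f(\tmf)$ recorded after \eqref{eq:tate}. From Lemma \ref{lemma:eqtmf} we have a cofiber sequence $\univ_+ \wedge \tmf \to F(\univ_+, \tmf) \to \ko((x^8))$, so the idea is to feed this cofiber sequence into $\hmf$-homology, compare the long exact sequence with what we already know about $\hmf_{*}(\ko((x^8)))$ (which by the argument of Proposition \ref{pro:koxeight}, applied with $\ko((x^8))$ in place of $\ko[x^8]$, is $\hmf_{*}[\rho^{-1}][\theta_3,\theta_4,\ldots,\zeta_1^4,\zeta_2^2,\zeta_3,\ldots]/(\theta_i^2+\rho\theta_{i+1}+\tau\zeta_{i+1})$ but now built on $\F[\tau,\rho^{\pm1}]$-coefficients with the $((x^8))$ version of the free module), and extract the homology of the fiber.

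First I would make precise the relation between $F(\univ_+,\tmf)$ and the geometric pieces: smashing $\univ_+$ on a spectrum with trivial action kills nothing in $\hmf$-homology that isn't already accounted for by the $\rho$-inversion, so $\hmf_{*}(F(\univ_+,\tmf))$ and $\hmf_{*}(\univ_+ \wedge \tmf)$ are both $\rho$-complete versions (respectively the homotopy-fixed and homotopy-orbit flavours) of the $\rho$-inverted calculation. Concretely, I would use that $\hmf_{*}(\tmf) \cong \hmf_{*} \otimes_{\F} H\F^{*}(\tmf)$ as a comodule (trivial-action version of Lemma \ref{lemma:coactiongeometricfixed} before inverting $\rho$), that $H\F^{*}(\tmf) \cong \A\sur\A(2)$ by \eqref{eq:cohtmf}, and that the change-of-rings identification $\Aq\sur\Aq(2) \cong \hmf_{*}\otimes_{\hfr_{*}}(\cAr\sur\cAr(2))$ gives the expected polynomial generators $\theta_3,\theta_4,\ldots,\zeta_1^4,\zeta_2^2,\zeta_3,\ldots$. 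Then I would observe that $(\univ_+\wedge\hmf)_{*}$ is the correct $\hmf_{*}$-replacement for the homotopy orbit version, and identify $\hmf_{*}(\univ_+\wedge\tmf)$ as the free $(\univ_+\wedge\hmf)_{*}$-module on the same generating set, with the relations $\theta_i^2 + \rho\theta_{i+1} + \tau\zeta_{i+1}$ inherited from the comodule structure on $\hmf_{*}(\tmf)$, and the comodule structure given by the formulae of Lemma \ref{lemma:definephi} — since $\univ_+\wedge\tmf$ maps to $\tmf$, the coaction is the restriction of the one on $\hmf_{*}(\tmf)$, which is $\Phi_{\A}$-compatible.

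The cleanest route, and the one I would actually write, is: (1) the natural map $\univ_+\wedge\tmf \to \tmf$ and the weak equivalence $f(\tmf) \simeq \univ_+\wedge\tmf$ give $\hmf_{*}(\univ_+\wedge\tmf)$ as a comodule; (2) apply $H\F\wedge-$ to the cofiber sequence $\univ_+\wedge\tmf\to\tmf\to\tuniv\wedge\tmf$ and its interaction with the Tate square of Proposition \ref{pro:tatering}; (3) use the already-computed $\hmf_{*}(\tuniv\wedge\tmf)$ (which by Lemma \ref{lemma:coactiongeometricfixed} is $\F[\tau,\rho^{\pm1}]\otimes_{\F}H\F^{*}(\tmf) = \F[\tau,\rho^{\pm1}]\otimes\A\sur\A(2)$, with coaction controlled by $\Phi_{\A}$) together with the long exact sequence to pin down $\hmf_{*}(\univ_+\wedge\tmf)$ as the claimed $(\univ_+\wedge\hmf)_{*}$-free module on $\theta_3,\theta_4,\ldots,\zeta_1^4,\zeta_2^2,\zeta_3,\ldots$ subject to the stated relations. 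The comodule structure statement then follows because every map in sight ($\univ_+\wedge\tmf\to\tmf$, the Tate maps) is a comodule map and the coaction is determined on generators by $\Phi_{\A}$ exactly as in the proof of Proposition \ref{pro:koxeight}.

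The main obstacle I expect is bookkeeping the difference between the $[x^8]$ (wedge/bounded-below) version and the $((x^8))$ (product) version, i.e. correctly transporting the homology computation of Proposition \ref{pro:koxeight} for $\ko[x^8]$ to $\ko((x^8))$ and then threading it through the long exact sequence so that the fiber term comes out as a \emph{free} module on the $\A(2)$-generators rather than merely an associated graded of a filtration. This is a completeness/Mittag-Leffler issue of the same flavour as the one handled in the Bousfield-Kan arguments of Section \ref{sub:hfrmot}; since $\tmf$ is connective, the product and limit-of-wedges descriptions agree as recalled in Section \ref{sub:tmfq}, and row-wise finiteness of $\F[\tau,\rho^{\pm1}]\otimes\A\sur\A(2)$ should make the relevant $\lim^1$ terms vanish, so the identification goes through without extension problems — but this is the step that needs genuine care rather than a one-line citation.
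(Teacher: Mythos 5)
Your module-level computation would go through (and in fact you do not even need the long exact sequence for it: by Lemma \ref{lemma:extensionofscalars} and freeness of $H\F_*(\tmf)$, $\hmf\wedge\tmf$ is a wedge of integer suspensions of $\hmf$, so smashing with $\univ_+$ immediately gives $\hmf_*(\univ_+\wedge\tmf)\cong(\univ_+\wedge\hmf)_*\otimes_{\F}H\F_*(\tmf)$), but the actual content of the lemma is the identification of the $\Aq_*$-comodule structure in terms of the equivariant classes $\theta_i,\zeta_i$, and there your argument invokes the wrong comparison map. The map $\Phi_{\A}$ of Lemma \ref{lemma:definephi} describes $(\tuniv\wedge\hmf\wedge\hmf)_*$, i.e.\ the $\rho$-inverted (geometric) situation, where $\zeta_i$ maps to $\rho^{2^i-1}\pz_i$ plus lower terms. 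For $\univ_+\wedge(-)$ the relevant object is the Borel-type identification $(\univ_+\wedge\hmf\wedge\hmf)_*\cong(\univ_+\wedge\hmf)_*[\pz_1,\pz_2,\hdots]$ of Hu--Kriz, and there the generators behave completely differently: $\tau_0\mapsto\pz_1$, $\theta_i\mapsto\pz_{i+1}$ modulo lower terms, and $\zeta_i\mapsto\pz_i^2$ modulo lower terms. It is exactly this index shift and squaring --- not multiplication by powers of $\rho$ --- that matches the generating set $\{\theta_3,\theta_4,\hdots,\zeta_1^4,\zeta_2^2,\zeta_3,\hdots\}$ with the dual basis $\F[\pz_1^8,\pz_2^4,\pz_3^2,\pz_4,\hdots]$ of $H\F_*(\tmf)$. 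Running your argument with $\Phi_{\A}$ would not produce the stated generators, so the coaction identification is not established. This is precisely the point of the paper's proof, which repeats the argument of Lemma \ref{lemma:coactiongeometricfixed} and Proposition \ref{pro:koxeight} but with the Hu--Kriz morphism for $\univ_+\wedge(-)$ in place of $\Phi_{\A}$, matching generators by degree and checking compatibility with the diagonal.

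Two further points. Your claim that the coaction on $\hmf_*(\univ_+\wedge\tmf)$ is ``the restriction of the one on $\hmf_*(\tmf)$'' along $\univ_+\wedge\tmf\to\tmf$ cannot stand as written: the map $(\univ_+\wedge\hmf)_*\to\hmf_*$ has a large kernel (the image of the connecting homomorphism from $(\tuniv\wedge\hmf)_{*+1}$), so the induced map $\hmf_*(\univ_+\wedge\tmf)\to\hmf_*(\tmf)$ is not injective and naturality along this single map does not determine the coaction; you would at least need to also track the connecting map from $\hmf_*(\tuniv\wedge\tmf)$ as a comodule map, and even then the answer is most cleanly read off from the Borel Hopf algebroid description above. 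Finally, the $\ko[x^8]$ versus $\ko((x^8))$ and $\lim^1$ worry is avoidable altogether: using the top row $\univ_+\wedge\tmf\to\tmf\to\tuniv\wedge\tmf$ of the Tate diagram (or the direct wedge decomposition noted above), no completed object enters the computation, which is why the paper's proof needs no completeness argument.
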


\begin{proof}
This is entirely analogous to the $\tuniv \wedge (-)$ part. Here, the morphism of geometric Hopf algebroids
$$ \univ_+ \wedge \hmf \wedge \hmf \cong (\univ_+ \wedge \hmf)_{*}[ \pz_1,\pz_2, \hdots ]$$
is the morphism appearing in \cite[p.385]{HK01}. Namely, the two sides are $(\univ_+ \wedge \hmf)_{*}$-Hopf algebroids, free as $(\univ_+ \wedge \hmf)_{*}$-modules, and with algebra generators in the same distinct degrees.
From this, it is clear that $\tau_0$ is sent to $\pz_1$, and that the $\zeta_i$ are sent to $\pz_i^2$ modulo terms of lower topological degree. The compatibility with the diagonal concludes the identification.
\end{proof}

To conclude the proof of Theorem \ref{thm:cohtmfq}, we have to actually identify the extension
$$ \univ_+ \wedge \tmfq \rightarrow \tmfq \rightarrow \tuniv \wedge \tmfq$$
in equivariant homology. To this end, we use the following observation: 

\begin{lemma} \label{lemma:tmfqorientation}
The Tate diagram for $\tmfq$ maps to the Tate diagram for $\hmf$. In particular, this provides a map $\tmfq \rightarrow \hmf$.
\end{lemma}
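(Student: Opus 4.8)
The plan is to construct the map of Tate diagrams by exhibiting a compatible family of maps from the four corners of the Tate pullback square defining $\tmfq$ (Definition \ref{de:tmfq}) to the corresponding corners of the Tate pullback square for $\hmf$. Recall from the discussion preceding Conjecture \ref{pro:tmfqring} that the Tate square for $\hmf$ is
\begin{equation*}
\xymatrix{ \hmf \ar[r] \ar[d] & \tuniv \wedge H\F[x] \ar[d] \\
F(\univ_+ , H\F) \ar[r] & \tuniv \wedge H\F((x)). }
\end{equation*}
First I would produce the map $\tmf \to H\F$ on underlying non-equivariant spectra: this is the canonical map of ring spectra realizing the Hurewicz/Postnikov truncation, which exists because $\tmf$ is connective and $H\F$ is its relevant truncation after killing $2$ (equivalently, it is dual to the unit $\F \to H\F^*(\tmf)$ of the $\A$-module $\A\sur\A(2)$). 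Similarly, on connective covers there is the standard map $\ko \to H\F$ (again the bottom Postnikov truncation mod $2$). These are compatible with the ring structures and, crucially, with the Tate construction because $t(-)$ and $F(\univ_+,-)$ are functorial and $\tuniv \wedge (-)$ is functorial.

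Next I would assemble the four component maps. On the lower-left corner, $F(\univ_+,\tmf) \to F(\univ_+, H\F)$ is $F(\univ_+, -)$ applied to $\tmf \to H\F$. On the lower-right corner, $\tuniv \wedge \ko((x^8)) \to \tuniv \wedge H\F((x))$ should be $\tuniv \wedge (-)$ applied to a map $\ko((x^8)) \to H\F((x))$; the latter is induced by $\ko \to H\F$ together with the fact that the formal variable $x^8$ in degree $8$ maps to $x^8$ where $x$ has degree $1$ (so the doubly-infinite product over $\Sigma^{8i}\ko$ maps to the sub-family of $\Sigma^{8i}H\F$ inside $\prod \Sigma^i H\F$). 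On the upper-right corner, $\tuniv \wedge \ko[x^8] \to \tuniv \wedge H\F[x]$ is the analogous map on the wedge (non-negative powers) side. These two right-hand maps are compatible with the inclusions $\ko[x^8] \hookrightarrow \ko((x^8))$ and $H\F[x] \hookrightarrow H\F((x))$ by construction. Finally the compatibility of the lower-left and lower-right maps over $\tuniv \wedge \ko((x^8)) \leftarrow F(\univ_+,\tmf)$ with $\tuniv \wedge H\F((x)) \leftarrow F(\univ_+,H\F)$ follows from naturality of the whole Tate diagram applied to $\tmf \to H\F$ (using Lemma \ref{lemma:eqtmf} and Proposition \ref{pro:tatering}: the bottom row of the $\tmf$-Tate square, after smashing appropriately, is the same diagram as the $\tmf$ Tate diagram with trivial action). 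Since $\tmfq$ is the homotopy pullback of the upper-right and lower-left corners over the lower-right one, and $\hmf$ is the homotopy pullback of its corresponding corners, the universal property of the homotopy pullback produces the desired map $\tmfq \to \hmf$, and a choice of homotopies witnessing commutativity of the cube upgrades this to a genuine map of Tate diagrams.

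The main obstacle I expect is bookkeeping of the homotopies making the cube commute rather than any conceptual difficulty: a map of homotopy pullbacks is not determined by maps of the defining cospans alone, but requires a choice of homotopy filling the relevant square, and one must check that the homotopy produced on the lower-right corner from the two sides agrees (up to higher homotopy) so that the induced map on pullbacks is well-defined. In this situation this is manageable because all the maps in sight are obtained by applying the functors $\tuniv\wedge(-)$, $F(\univ_+,-)$, $t(-)$ to the single ring map $\tmf \to H\F$, so the cube is the image under these functors of a (strictly) commuting diagram in non-equivariant spectra, hence commutes coherently; the "in particular" clause of the lemma, namely the map $\tmfq \to \hmf$, then follows immediately by taking the induced map of homotopy limits of the two Tate squares (noting that $\hmf$ itself \emph{is} the homotopy pullback of its Tate square by the remarks in Section \ref{sub:tmfq}, and $\tmfq$ is defined as such a pullback).
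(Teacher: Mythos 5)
Your proposal is correct and follows essentially the same route as the paper: both use the ring maps $\tmf \to H\F$ and $\ko \to H\F$, functoriality of $F(\univ_+,-)$ and $\tuniv \wedge (-)$, compatibility of the splittings $t(\tmf) \cong \ko((x^8))$ and $t(H\F) \cong H\F((x))$, and the commuting square $\ko[x^8] \to H\F[x]$ over $\ko((x^8)) \to H\F((x))$ to get a map of Tate squares, whence a map of homotopy pullbacks $\tmfq \to \hmf$. Your extra attention to the homotopy-coherence of the cube is a reasonable elaboration of a point the paper passes over silently, but it is not a different argument.
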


\begin{proof}
The map $\tmf \rightarrow H\F$, already gives a morphism of cofiber sequences 
$$ \xymatrix{ \univ_+ \wedge F( \univ_+, \tmfq) \ar[r] \ar[d] & F( \univ_+, \tmfq) \ar[r] \ar[d] & \tuniv \wedge F( \univ_+, \tmfq) \ar[d] \\ \univ_+ \wedge F( \univ_+, \hmf) \ar[r] & F( \univ_+, \hmf) \ar[r] & \tuniv \wedge F( \univ_+, \hmf) }
$$
since the underlying spectrum of $\hmf$ is $H\F$, and that the decomposition \linebreak $t(\tmf) \cong \ko((x^8))$ is compatible with $t(H\F) \cong H\F((x))$.

To conclude, note that the following square is commutative
$$ \xymatrix{ \ko[x^8] \ar[r] \ar[d] & H\F[x] \ar[d] \\\ko((x^8)) \ar[r] & H\F((x)), } $$
where the horizontal arrows are induced by $\ko \rightarrow H\F$.
\end{proof}

\begin{proof}[Proof of Theorem \ref{thm:cohtmfq}]
By Proposition  \ref{pro:koxeight}, we know the coaction of the Steenrod algebra on $\hmf(\tuniv\wedge \tmfq)$,  and by Lemmas \ref{lemma:eqtmf} and \ref{lemma:eqx}, we also know that \linebreak $\hmf_*( \univ_+ \wedge \tmfq) \cong \hmf_*(\univ_+) \otimes_{\hmf_*} \left( \Aq\sur\Aq(2)\right)^{\vee}$.

Now, Lemma \ref{lemma:tmfqorientation} gives a map of long exact sequences
\begin{equation*}
\xymatrix{
\hmf_*(\univ_+) \otimes_{\hmf_*} \left( \Aq\sur\Aq(2) \right)_* \ar[r] \ar[d]& \hmf_*(\tmfq) \ar[r] \ar[d]& \left( \Aq\sur\Aq(2)\right)_*[\rho^{-1}] \ar[d] \\
\hmf_*(\univ_+) \otimes_{\hmf_*} \Aq_* \ar[r]& \Aq_* \ar[r]& \Aq_*[\rho^{- 1}]. }
\end{equation*}

Moreover, the leftmost and rightmost vertical arrows induce the previous isomorphisms
\begin{equation}
\hmf_*( \univ_+ \wedge \tmfq) \cong \hmf_*(\univ_+) \otimes_{\hmf_*} \left( \Aq\sur\Aq(2)\right)^{\vee}
\end{equation}
and 
\begin{equation}
\hmf(\tuniv\wedge \tmfq) \cong  \left( \Aq\sur\Aq(2)\right)^{\vee}[\rho^{- 1}].
\end{equation}

This gives the desired isomohphism $\hmf_*(\tmfq) = \left( \Aq\sur\Aq(2)\right)_*$ by the $5$-lemma.
\end{proof}

\section{Motivic versions} \label{sec:mmf}

\subsection{Motivic modular forms over $\spec(\R)$} \label{sub:tmfr}

\begin{de} \label{de:mmfr}
Let $\tmfr$ be $\mot(\tmfq)$.
\end{de}

\begin{thm} \label{thm:tmfr}
There is an isomorphism of $\Ar$-modules
$$\chfr^{*}(\tmfr) \cong \cAr \sur \cAr(2).$$
\end{thm}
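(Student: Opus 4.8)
The plan is to apply the machinery of Part~\ref{part:comparison} directly to the equivariant spectrum $\tmfq$. Recall that $\tmfr$ is \emph{defined} as $\mot(\tmfq)$ in Definition~\ref{de:mmfr}, and that Theorem~\ref{thm:hmote} produces an isomorphism of modules over the motivic Steenrod algebra
$$\hfr_{*}(\mot(E)) \cong \hfr_{*} \otimes_{\hmf_{*}} \hmf_{*}(E)$$
whenever $E \in \shg$ has $\hmf_*$-free equivariant homology which is finite-dimensional over $\F$ in each weight $(*,k)$. So the proof has two parts: first, verify that $E = \tmfq$ satisfies the hypotheses of Theorem~\ref{thm:hmote}; second, dualize and identify the right-hand side as $\cAr\sur\cAr(2)$.

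For the first part, Theorem~\ref{thm:cohtmfq} gives $\hmf^{*}(\tmfq) \cong \Aq\sur\Aq(2)$ as $\Aq$-modules, so dually $\hmf_{*}(\tmfq) \cong (\Aq\sur\Aq(2))_*$. By the discussion following Remark~\ref{rk:finitnessrk}, the quotient Hopf algebroid $\Aq\sur\Aq(2)$ is free as a module over $\hmf_*$ and its generators sit in the degree range required by the second bullet of Remark~\ref{rk:finitnessrk}, with finitely many in each degree; hence the finiteness hypothesis of Theorem~\ref{thm:hmote} holds. (Alternatively one can observe directly that $\Aq\sur\Aq(2)$ is $\hmf_*$-free on the admissible monomials in $Sq^1,Sq^2,Sq^4$ missing from $\Aq(2)$, and $\A\sur\A(2)$ is finite-dimensional in each topological degree.) Thus Theorem~\ref{thm:hmote} applies to $E = \tmfq$.

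For the second part, apply Theorem~\ref{thm:hmote} to obtain
$$\hfr_{*}(\tmfr) = \hfr_{*}(\mot(\tmfq)) \cong \hfr_{*}\otimes_{\hmf_*}\hmf_*(\tmfq) \cong \hfr_{*}\otimes_{\hmf_*}(\Aq\sur\Aq(2))_*$$
as comodules over the dual motivic Steenrod algebra $\Ar$. Now use Remark~\ref{rk:steenrodalgebras}, which records $\Aq = \hmf_* \otimes_{\hfr_*}\Ar$, together with the analogous identity $\Aq(2) = \hmf_*\otimes_{\hfr_*}\cAr(2)$ defining the motivic $\cAr(2)$; these give $\Aq\sur\Aq(2) \cong \hmf_*\otimes_{\hfr_*}(\Ar\sur\cAr(2))$ as $\hmf_*$-modules compatibly with the Hopf algebroid structures. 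Base-changing back along $\hfr_* \to \hmf_*$ (or rather observing that $\hfr_*\otimes_{\hmf_*}(\hmf_*\otimes_{\hfr_*}M) \cong M$ for $M$ an $\hfr_*$-module since $\hfr_* \to \hmf_*$ splits the relevant way, cf.\ Notation~\ref{nota:manddm} where $\hmf_* = \M\oplus\nor D\M$ with $\M = \hfr_*$) yields $\hfr_{*}(\tmfr) \cong (\Ar\sur\cAr(2))_*$ as $\Ar$-comodules. Dualizing gives the stated isomorphism of $\Ar$-modules $\chfr^{*}(\tmfr) \cong \cAr\sur\cAr(2)$; in fact this is exactly the content of the Corollary stated just after Theorem~\ref{thm:hmote}, applied with $B = \Aq(2)$. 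The main obstacle is bookkeeping rather than conceptual: one must make sure the extension-of-scalars identities at the level of Hopf algebroids (Remark~\ref{rk:steenrodalgebras}) are compatible with the comodule structures produced by Theorem~\ref{thm:hmote}, and that the cohomological (as opposed to homological) statement follows by the finiteness already invoked, so that dualizing is harmless.
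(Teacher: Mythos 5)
Your proposal is correct and follows essentially the same route as the paper: the paper's proof is a one-line application of Theorem \ref{thm:hmote} to $E=\tmfq$, whose hypotheses are supplied by Theorem \ref{thm:cohtmfq}, with the identification of the base-changed comodule as $\cAr\sur\cAr(2)$ being exactly the content of Remark \ref{rk:steenrodalgebras} and the corollary following Theorem \ref{thm:hmote}. Your extra bookkeeping (freeness and degreewise finiteness of $\Aq\sur\Aq(2)$, the base-change identity, and dualizing) just makes explicit what the paper leaves implicit.
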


\begin{proof}
This is a consequence of Theorem \ref{thm:hmote} since the hypothesis is satisfied for $\tmfq$ by Theorem \ref{thm:cohtmfq}.
\end{proof}

%
%

\subsection{Motivic modular forms over $\spec(\C)$ and additional comments} \label{sub:tmfc}

As observed in Remark \ref{rk:steenrodalgebras}, the corresponding descent technique developed in Part \ref{part:comparison} does not provide a motivic spectrum over $\spec(\C)$ whose cohomology is $\cAc\sur\cAc(2)$.

However, the pullback of $\tmfr \in \shr$ to $\shc$ is better behaved.

\begin{de} \label{de:demmfc}
Let $\tmfc$ be $p_* \mot(\tmfq)$.
\end{de}

We now compute the cohomology of this spectrum $\tmfc$. To do so, we need a result about the relationship between the functor $p^*$ and motivic cohomology.

\begin{lemma} \label{lemma:comparisonwithc}
There is a weak equivalence 
\begin{equation}
p_*\hfc \cong \frac{\hfr}{(\rho)}.
\end{equation}
In particular, if the cohomology of $X \in \shr$ is free over the coefficient ring, then
\begin{equation}
\chfc^*(p^*X) \cong \left(p_*\chfc\right)^*(X) \cong \frac{\chfr^*(X)}{(\rho)}.
\end{equation}
\end{lemma}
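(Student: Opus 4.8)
The plan is to establish the spectrum-level equivalence $p_*\hfc \simeq \hfr/(\rho)$ first, and then deduce the statement about the cohomology of $p^*X$ by a long exact sequence argument in which freeness over $\chfr_*$ forces multiplication by $\rho$ to be injective.

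For the first statement I would use three standard facts about the base change $p\colon\spec(\C)\to\spec(\R)$. First, motivic cohomology is stable along this (smooth) field extension, so that $p^*\hfr\simeq\hfc$ (continuity of the motivic Eilenberg--MacLane spectrum; e.g.\ $p^*$ commutes with the slice filtration and $\hfr$ is the zeroth slice of $\S$). Second, since $p$ is finite \'etale, the pushforward $p_*$ is \emph{also} a left adjoint of $p^*$ (finite \'etale ambidexterity); in particular it satisfies the projection formula and $p_*p^*\S\simeq\spec(\C)_+$, so that
$$p_*\hfc \;\simeq\; p_*p^*\hfr \;\simeq\; \hfr\wedge p_*p^*\S \;\simeq\; \hfr\wedge\spec(\C)_+ .$$
Third, in $\shr$ there is the ``Galois'' cofiber sequence $\spec(\C)_+ \to \S \stackrel{\rho}{\to} S^{(1,1)}$, which under Betti realization becomes the familiar $\G$-equivariant cofiber sequence $\G_+\to\S\to S^{(1,1)}$. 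Smashing this cofiber sequence with $\hfr$ then produces a cofiber sequence
$$p_*\hfc \;\simeq\; \hfr\wedge\spec(\C)_+ \longrightarrow \hfr \stackrel{\rho}{\longrightarrow} S^{(1,1)}\wedge\hfr ,$$
exhibiting $p_*\hfc$ as (a twist of) the cofiber of multiplication by $\rho$ on $\hfr$; this is the meaning of $\hfr/(\rho)$.

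For the ``in particular'' clause, I would first observe that by the adjunction $p^*\dashv p_*$ --- working throughout in the cellular categories, so that $p_*$ is post-composed with cellularization as in Warning \ref{war:cellular}, which affects neither side of the asserted isomorphism --- one has
$$\chfc^*(p^*X) \;=\; [p^*X,\hfc]^*_{\shc} \;=\; [X,p_*\hfc]^*_{\shr} \;=\; (p_*\hfc)^*(X).$$
Applying $[X,-]^*$ to the cofiber sequence $p_*\hfc\to\hfr\stackrel{\rho}{\to}S^{(1,1)}\wedge\hfr$ from the previous step yields a long exact sequence
$$\cdots \longrightarrow \chfr^*(X) \stackrel{\cdot\rho}{\longrightarrow} \chfr^*(X) \longrightarrow (p_*\hfc)^*(X) \longrightarrow \chfr^*(X) \stackrel{\cdot\rho}{\longrightarrow} \cdots$$
(with the evident internal degree shifts), whose maps labelled $\cdot\rho$ are multiplication by $\rho\in\chfr_*$. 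Since $\rho$ is a non--zero--divisor in $\chfr_*=\F[\rho,\tau]$ and $\chfr^*(X)$ is free over $\chfr_*$ by hypothesis, multiplication by $\rho$ is injective on $\chfr^*(X)$; the long exact sequence therefore breaks into short exact sequences and we conclude that $(p_*\hfc)^*(X)\cong\chfr^*(X)/\rho\,\chfr^*(X)=\chfr^*(X)/(\rho)$.

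The main obstacle is the first part, concretely the three inputs invoked there: the continuity equivalence $p^*\hfr\simeq\hfc$, the finite \'etale ambidexterity for $p_*$ (equivalently, the projection formula for $p_*$), and the Galois cofiber sequence for $\spec(\C)_+$. Each is standard in the motivic literature but must be cited carefully, and one has to mind grading conventions in order to match the displayed bidegree in $p_*\hfc\cong\hfr/(\rho)$ rather than a twist of it by $S^{(0,1)}$. Once the first part is in hand, the long exact sequence argument for the ``in particular'' clause is routine.
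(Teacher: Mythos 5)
Your second displayed step (the adjunction $\chfc^*(p^*X)\cong(p_*\hfc)^*(X)$, the long exact sequence, and freeness forcing multiplication by $\rho$ to be injective) is correct and routine. Your route to the spectrum-level statement, however, is genuinely different from the paper's and, as written, has an unresolved gap exactly at the point you flag. The paper never invokes $\spec(\C)_+$, finite \'etale ambidexterity, or a Galois cofiber sequence: it takes the unit $f:\hfr\to p_*p^*\hfr\simeq p_*\hfc$, shows $\rho f$ is null by passing to the adjoint map over $\spec(\C)$, obtains an induced map $\hfr/(\rho)\to p_*\hfc$, and checks directly that it is a $\pi_{*,*}$-isomorphism (which suffices in the cellular category, again using $p^*S^{(a,b)}\simeq S^{(a,b)}$ and the adjunction). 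That argument is self-contained and automatically bidegree-correct.

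The gap in your version: your second and third inputs are incompatible as indexed, by precisely the $S^{(0,1)}$-twist you mention but never resolve. Ambidexterity plus the projection formula give $p_*\hfc\simeq\hfr\wedge\spec(\C)_+$, and the adjunction gives $\pi_{(a,b)}(p_*\hfc)\cong\hfc^{(-a,-b)}(\S_{\C})$, which is $\F[\tau]$ concentrated in degrees $(0,b)$ with $b\le 0$; in particular $\pi_{(0,1)}(p_*\hfc)=0$. On the other hand, smashing your displayed sequence $\spec(\C)_+\to\S\xrightarrow{\rho}S^{(1,1)}$ with $\hfr$ identifies $\hfr\wedge\spec(\C)_+$ with the fiber of $\rho:\hfr\to\Sigma^{(1,1)}\hfr$, which is $\Sigma^{(0,1)}\bigl(\hfr/(\rho)\bigr)$ and has a nonzero class in $\pi_{(0,1)}$ (the boundary of the unit in $\pi_{(1,1)}(\Sigma^{(1,1)}\hfr)$, which is not divisible by $\rho$). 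So the two steps cannot both be correct as stated: since the adjunction computation is unambiguous, the Galois cofiber sequence must be indexed so as to exhibit $\spec(\C)_+$ as the cofiber of $\rho:\Sigma^{(-1,-1)}\S\to\S$ rather than as the fiber of $\rho:\S\to S^{(1,1)}$, or else your conclusion is the weight-shifted spectrum rather than $\hfr/(\rho)$. The precise bidegree is the entire content of the lemma (and is what Corollary \ref{cor:cohommmfc} consumes), so this must be pinned down; the cheapest repair is to verify $\pi_{*,*}$ of both sides via the adjunction at the end --- at which point you are essentially reproducing the paper's proof and the heavier inputs (ambidexterity, the Galois cofiber sequence) become unnecessary.
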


\begin{proof}
By construction, the Eilenberg-MacLane motivic spaces are compatible with the functor $p^*$ (see the construction in \cite{Voe03b}). Consequently,
\begin{equation}
p^*\hfr \cong \hfc.
\end{equation}

Consider the unit of the adjunction $(p^*,p_*)$:
\begin{equation*}
\hfr \rightarrow p_*p^* \hfr.
\end{equation*}

Using the weak equivalence 
\begin{equation*}
p^*\hfr \cong \hfc,
\end{equation*}
we obtain
\begin{equation*}
f: \hfr \rightarrow p_*p^* \hfr \cong p_*\hfc.
\end{equation*}
We want to show that $\rho f$ is nullhomotopic. The composite map $\rho f : \Sigma^{-\sign} \hfr \rightarrow p_* \hfc$ is adjoint to a morphism $\Sigma^{-\sign} p^* \hfr \rightarrow p^* \hfr$, since $p^* S^{-\sign} = S^{-\sign}$. This is zero since there is no element in $\Ac$ in this degree.

This produces a morphism $g : \frac{\hfr}{(\rho)} \rightarrow p_* \hfc$. This morphism is an isomorphism in homotopy groups, since
\begin{equation}
\pi^{\R}_{*}\left(\frac{\hfr}{(\rho)}\right) \cong \pi^{\C}_{*}(\hfc) \cong \pi^{\R}_{*}(p_*\hfc),
\end{equation}
using again that $p^*S^{*} \cong S^{*}$.
\end{proof}

\begin{cor} \label{cor:cohommmfc}
There is an isomorphism of $\cAc$-modules
$$\chfc^{*}(\tmfc) \cong \cAc \sur \cAc(2) .$$
\end{cor}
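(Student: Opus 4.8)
The plan is to combine the two main inputs that are already available: Theorem~\ref{thm:tmfr}, which identifies $\chfr^{*}(\tmfr) \cong \cAr \sur \cAr(2)$ as a module over $\cAr$, and Lemma~\ref{lemma:comparisonwithc}, which computes motivic cohomology over $\spec(\C)$ of a pullback $p^*X$ provided $\chfr^{*}(X)$ is free over the coefficient ring $\M = \F[\rho,\tau]$. Since $\tmfc$ is by Definition~\ref{de:demmfc} precisely $p_*\mot(\tmfq) = p_*\tmfr$ — wait, more carefully, $p^*\tmfr$ is the relevant object, but in any case the statement of Lemma~\ref{lemma:comparisonwithc} is phrased so that $\chfc^*(p^*X) \cong \chfr^*(X)/(\rho)$ — the corollary should follow by feeding $X = \tmfr$ into that lemma.

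First I would check that the hypothesis of Lemma~\ref{lemma:comparisonwithc} is met: namely that $\chfr^{*}(\tmfr)$ is free over the coefficient ring $\hfr_* = \M = \F[\rho,\tau]$. This is immediate from Theorem~\ref{thm:tmfr} together with the fact (recalled just before the Corollary to Theorem~\ref{thm:hmote}, citing \cite{Ric1} and \cite{Gre12}) that $\cAr \sur \cAr(2)$ is free as a module over the cohomology of a point. So Lemma~\ref{lemma:comparisonwithc} applies and gives
\begin{equation*}
\chfc^{*}(\tmfc) \cong \frac{\chfr^{*}(\tmfr)}{(\rho)} \cong \frac{\cAr \sur \cAr(2)}{(\rho)}.
\end{equation*}
Second I would identify the right-hand side with $\cAc \sur \cAc(2)$. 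Here one uses that the motivic Steenrod algebra over $\spec(\C)$ is obtained from that over $\spec(\R)$ by killing $\rho$: this is visible from Proposition~\ref{pro:steenrod}, where setting $\rho = 0$ in the presentation of $\Ar_*$ (with relations $\tau_i^2 + \rho\tau_i + \eta_R(\tau)\xi_{i+1}$ and $\eta_R(\tau) = \rho\tau_0 + \tau$) yields the complex motivic dual Steenrod algebra $\Ac_*$ over $\M/(\rho) = \F[\tau]$, and dually $\cAc \cong \cAr/(\rho)$ as well as $\cAc(2) \cong \cAr(2)/(\rho)$ — this last identification using the description of $\cAr(2)$ and $\cAc(2)$ in \cite{Ric1}, \cite{Gre12}. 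Since $\rho$ acts freely on $\cAr$ and on $\cAr(2)$ (both free over $\F[\rho,\tau]$), reduction mod $\rho$ is exact on the short exact sequence $0 \to \cAr(2) \to \cAr \to \cAr\sur\cAr(2) \to 0$, so $(\cAr\sur\cAr(2))/(\rho) \cong \cAc\sur\cAc(2)$ as $\cAc$-modules.

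The main obstacle is bookkeeping the module-structure compatibility rather than any genuine difficulty: one must make sure the $\cAr$-module structure on $\chfr^{*}(\tmfr)$ descends, after killing $\rho$, to the $\cAc$-module structure on $\chfc^{*}(\tmfc)$ that comes geometrically from the pullback $p^*$. This is exactly the content of the second displayed equation in Lemma~\ref{lemma:comparisonwithc} together with the naturality of $p^*$ on Eilenberg--MacLane spectra ($p^*\hfr \cong \hfc$), so the argument is a formal consequence of that lemma; the only care needed is in checking that all the identifications are as $\cAc$-modules and not merely as graded $\F[\tau]$-modules, which follows from tracking the Hopf-algebroid morphisms through the base-change $(-)/(\rho)$.
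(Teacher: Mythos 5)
Your argument is correct and is essentially the paper's own proof: the paper likewise deduces the corollary immediately from Theorem~\ref{thm:tmfr} by means of Lemma~\ref{lemma:comparisonwithc}, and your extra steps (verifying freeness of $\cAr\sur\cAr(2)$ over $\F[\rho,\tau]$ and identifying the quotient by $\rho$ with $\cAc\sur\cAc(2)$) simply spell out what the paper leaves implicit. Your parenthetical worry about $p_*$ versus $p^*$ flags a genuine notational slip in Definition~\ref{de:demmfc}, not a flaw in your argument; the object fed into the lemma is the pullback of $\tmfr$ to $\spec(\C)$, exactly as you use it.
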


\begin{proof}
This is an immediate consequence of Theorem \ref{thm:tmfr}, using Lemma \ref{lemma:comparisonwithc} to drag the result to $\spec(\C)$.
\end{proof}

\begin{rem}
The slight detour we had to take to build a version of $\tmf$ over $\spec(\C)$ is retrospectively clear: to build the equivariant version, we have a powerful tool we do not possess in the motivic setting, namely the Tate diagram.
Then, the proximity between the $\G$-equivariant Steenrod algebra and the  motivic Steenrod algebra over $\spec(\R)$ is an indication that the hard work is almost done.

Finally, the category $\shr$ being the top left corner of the commutative square
$$\xymatrix{ \shr \ar[r]^{\re} \ar[d]^{p^*} & \shg \ar[d]^{i^*} \\
\shc \ar[r]^{\re} & \sh}$$
of forgetfull monoidal functors, we have a version of topological modular forms in the most structured category of this square, and every other one is obtained by forgetting $\tmfr$.
\end{rem}

\begin{appendices}

\section{Non-equivariant and $\G$-equivariant (co)homology}

Let $Y$ be a $\G$-spectrum. Then, there is a natural $\Aq^*$-module structure on $\hmf^*(Y)$: this is a morphism of $\hmf^*$-modules
\begin{equation} \label{eq:nataction}
\Aq^* \otimes \hmf^*(Y) \stackrel{\mu}{\longrightarrow} \hmf^*(Y),
\end{equation}
where the tensor product uses the left $\hmf^*$-module structure on $\hmf^*(Y)$ and the right $\hmf^*$-module structure on $\Aq^*$.

Note that there is also a coaction of $\Aq_*$ on $\hmf_*(Y)$: this is the morphism of $\hmf_*$-modules
\begin{equation} \label{eq:natcoaction}
\lambdaq_* : \hmf_*(Y) \longrightarrow \hmf_*(Y) \otimes \Aq_*,
\end{equation}
where the tensor product is with respect to the left $\hmf_*$-module structure of both $\hmf_*(Y)$ and $\Aq_*$.
This coaction is induced by
\begin{align*}
\hmf \wedge Y  \xrightarrow{\hmf \wedge \eta \wedge Y} & \hmf \wedge \hmf \wedge Y \\
  \cong & \hmf \wedge \hmf \wedge_{\hmf} \hmf \wedge Y
\end{align*}
in homotopy, where $\eta : \S \rightarrow \hmf$ is the unit.

Our ultimate goal in this notes is to understand the $\Aq^*$-module structure of $\hmf^*(i_*X)$, where $X$ is a non-equivariant spectrum, using only the knowledge of $H\F^*(X)$ as an  $\A^*$-module.

It turns out that the $\hmf^*$-module $\hmf^*(i_*X)$ is always $\hmf^*$-free (see Lemma \ref{lemma:extensionofscalars}). In the case when $Y\in \shg$ has a flat (co)homology, the classical analysis Milnor did in the non-equivariant case (see \cite{Mi58}) generalizes (see \cite[Theorem 11.13]{Bo95}), so that it is equivalent to study either the action of $\Aq^*$ in cohomology provided by \eqref{eq:nataction} or the coaction of the dual Steenrod algebra $\Aq_*$ on $\hmf^*(i_*X)$, given in \eqref{eq:natcoaction}. 

In turn, the $\Aq_*$-comodule structure on $\hmf_*(Y)$ correspond to a coaction
\begin{equation} \label{eq:natcoactioncoh}
\lambdaq^* : \hmf^*(Y) \longrightarrow \hmf^*(Y) \otimes \Aq_*,
\end{equation}
by duality (see the general statement about the duality between homology and cohomology in \cite[Theorem 4.14]{Bo95}, for example).

The study of this coaction is the subject of the next subsection.

\section{Cartan formula and its dual}

For any spectrum $X \in \sh$, the $\hmf$-cohomology of its pushforward $\hmf^*(X)$ is a free $\hmf^*$-module (see Lemma \ref{lemma:extensionofscalars}). Therefore, a flatness or even freeness assumption on equivariant cohomology groups are harmless for our purposes.

Let $X$ and $Y$ be $\G$-spectra whose cohomology is free over the cohomology of a point. In particular, there is a K\"unneth isomorphism
\begin{equation*}
\hmf_*(X \wedge Y) \cong \hmf_*(X) \otimes \hmf_*(Y),
\end{equation*}
where the tensor product is taken over $\hmf_*$.

Then, the coaction of $\Aq_*$ on $\hmf_*(X) \otimes \hmf_*(Y) $ is given by the composite
\begin{equation} \label{eq:coactionproduct}
\hmf_*(X) \otimes \hmf_*(Y) \rightarrow \Aq_* \otimes \hmf_*(X) \otimes  \Aq_* \otimes \hmf_*(X) \rightarrow \Aq_* \otimes \hmf_*(X) \otimes \hmf_*(Y),
\end{equation}
where the last map is induced by the product on $\Aq_*$.

In particular, take $X \in \shg$ to be the suspension spectrum of an equivariant \emph{space}. Then the diagonal induces a coalgebra structure on $\hmf_*(X)$. The argument of the last paragraph gives in that case the following result.

\begin{pro}
Let $X \in \shg$ be a suspension spectrum. Then, the coaction $\lambdaq^*$ of the dual Steenrod algebra $\Aq_*$ on  $\hmf^*(X)$ is a morphism of algebra.
\end{pro}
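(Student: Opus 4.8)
The plan is to read off the statement as a direct corollary of the general discussion on the Cartan formula that immediately precedes it. The claim concerns a suspension spectrum $X \in \shg$ of an equivariant space, for which the diagonal $X \to X \wedge X$ endows $\hmf^*(X)$ with an algebra structure (dually, $\hmf_*(X)$ with a coalgebra structure). One must show that the coaction map $\lambdaq^* : \hmf^*(X) \to \hmf^*(X) \otimes \Aq_*$ is a ring homomorphism, where the target carries the tensor-product ring structure coming from the cup product on $\hmf^*(X)$ and the product on $\Aq_*$.

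First I would set up the relevant commuting square. The multiplication on $\hmf^*(X)$ is induced by the diagonal $\delta : X \to X \wedge X$ together with the Künneth isomorphism (available here because, by Lemma \ref{lemma:extensionofscalars}, $\hmf^*$ of a pushforward is $\hmf^*$-free, so a flatness/freeness hypothesis is automatic for suspension spectra of spaces). The coaction $\lambdaq^*$ is, after dualizing the coaction $\lambdaq_*$ of \eqref{eq:natcoaction}, induced by smashing $X$ with the unit $\eta : \S \to \hmf$ as in the appendix. The point is then that the diagram expressing naturality of $\lambdaq_*$ with respect to $\delta$, combined with the description \eqref{eq:coactionproduct} of the coaction on a smash product (which uses precisely the product on $\Aq_*$), says exactly that $\lambdaq^* \circ \mu = (\mu \otimes \mathrm{mult}_{\Aq_*}) \circ (\text{shuffle}) \circ (\lambdaq^* \otimes \lambdaq^*)$. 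In other words, the coaction on the product is the product of the coactions — which is the assertion that $\lambdaq^*$ is a map of algebras. I would also record that $\lambdaq^*$ respects the units (it sends $1 \in \hmf^*(X)$, i.e. the image of the unit map $X_+ \to S^0$, to $1 \otimes 1$), since "morphism of algebras" should be taken to include unitality.

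Concretely, the steps in order are: (1) invoke Künneth for $\hmf_*$ (resp. $\hmf^*$) of $X \wedge X$, valid by the freeness of $\hmf^*$ on pushforwards of spaces; (2) recall that the product on $\hmf^*(X)$ is $\delta^*$ post-composed with the Künneth inverse, and dually the coproduct on $\hmf_*(X)$ is $\delta_*$; (3) use naturality of the coaction $\lambdaq_*$ applied to the map $\delta$, i.e. the square relating $\lambdaq_*$ on $\hmf_*(X)$ and on $\hmf_*(X \wedge X)$; (4) combine with the formula \eqref{eq:coactionproduct} for the coaction on a smash product, which is where the algebra (rather than merely module) structure on the target appears; (5) dualize the whole diagram, using the duality between the homology comodule and cohomology coaction recalled in \eqref{eq:natcoactioncoh}, to land the statement for $\lambdaq^*$; (6) check unitality separately. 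This is essentially the equivariant avatar of the classical fact that the action of the Steenrod algebra on the cohomology of a space satisfies the Cartan formula, repackaged comodule-theoretically.

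The only genuine subtlety — and the step I expect to require the most care — is the dualization in step (5): passing between the $\Aq_*$-comodule structure on $\hmf_*(X)$ and the $\Aq_*$-coaction on $\hmf^*(X)$ requires the finiteness/flatness bookkeeping of \cite[Theorems 4.14, 11.13]{Bo95}, and one should be careful that "algebra map into $\hmf^*(X) \otimes \Aq_*$" is with respect to the correct (possibly completed) tensor product when $X$ is not of finite type. For a suspension spectrum of a space this is unproblematic if one works degreewise, since each cohomology group of a pushforward is a free $\hmf^*$-module and the relevant tensor products of finitely-generated pieces behave well; but it is worth a sentence of justification rather than being swept under the rug. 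Everything else is a diagram chase using constructions already in place in the appendix.
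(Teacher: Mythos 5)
Your argument is essentially the paper's own proof: naturality of the coaction with respect to the diagonal $\Delta : X \rightarrow X \wedge X$, combined with the formula \eqref{eq:coactionproduct} for the coaction on a smash product (with the Künneth identification supplied by the freeness of Lemma \ref{lemma:extensionofscalars}). Your extra care about the dualization step and unitality is a reasonable elaboration of the same route, not a different one.
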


\begin{proof}
This is a consequence of the naturality of $\lambdaq^*$, as defined in equation \eqref{eq:natcoactioncoh} for the morphism of $\G$-equivariant spectra $\Delta : X \rightarrow X \wedge X$, together with equation \eqref{eq:coactionproduct}.
\end{proof}

\begin{ex}
As an example, \cite{HK01} completely determine the map $\lambdaq^*$ for a particular $\G$-space. We recall and complete the analysis made in \textit{loc cit} here for completeness.

Let $B'\Z/2$ be the $\G$-space of lines in $\C^{\infty}$, together with the action induced by complex conjugation on $\C^{\infty}$. Note that there is a map
\begin{equation} \label{eq:iota}
\iota : i_*B\Z/2 \inj B'\Z/2,
\end{equation}
which is a non-equivariant weak equivalence.

In \cite[p.381]{HK01}, Hu and Kriz determine the equivariant cohomology of this space: there is a $\hmf^*$-module isomorphism
\begin{equation} \label{eq:bprimecohomology}
\hmf^*(B'\Z/2) \cong \frac{\hmf^*[c,b]}{(c^2 + \rho c + \tau b)}.
\end{equation}

The same formal argument as in \cite{Mi58} gives
\begin{equation*}
\lambdaq^*(c) = c \otimes 1 + \sum_{i \geq 0} b^{2^i} \otimes \tau_i. 
\end{equation*}

In turns, this gives 
\begin{equation*}
\lambdaq^*(b) = \sum_{i \geq 0} b^{2^i} \otimes \xi_i,
\end{equation*}
using $c^2 = \rho c + \tau b$ from \eqref{eq:bprimecohomology} and $\tau_i^2= \rho \tau_{i+1} + \eta_R(\tau) \xi_{i+1}$ from Proposition \ref{pro:steenrod}. This immediately gives the map $\lambdaq^*$ for any element of $\hmf^*(B'\Z/2)$.
\end{ex}

\section{Twisted extension of scalars}

Before we even think about the $\Aq_*$-comodule structure on $\hmf_*(i_*X)$, we need to understand its $\hmf_*$-module structure. This is the subject of the following lemma.

\begin{lemma} \label{lemma:extensionofscalars}
Let $X \in \sh$. There is an isomorphism of $RO(\G)$-graded $\hmf^*$-modules
\begin{equation*}
\hmf_*(i_*X)  \cong \hmf_* \otimes_{\F} H\F_*(X),
\end{equation*}
where the tensor product is the $RO(\G)$-graded tensor product over $\F$, and $H\F_*(X)$ is concentrated in degrees $\Z \subset RO(\G)$.
\end{lemma}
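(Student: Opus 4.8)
\textbf{Proof plan for Lemma \ref{lemma:extensionofscalars}.}
The plan is to reduce the computation to the case $X = \S$ and then use a cell-by-cell argument. First I would recall that for the equivariant sphere itself one has, by Proposition \ref{pro:coeffring}, that $\hmf_*(i_*\S) = \hmf_* = \hmf_* \otimes_\F H\F_*(\S)$, since $H\F_*(\S) = \F$ concentrated in degree $0 \subset RO(\G)$. This is the base case. Next I would observe that since $X \in \sh$ is an ordinary spectrum, it is weakly equivalent to a cellular spectrum built out of the spheres $S^n$, $n \in \Z$, via (homotopy) pushouts and filtered colimits, and that the pushforward functor $i_*$ is (strong) monoidal and preserves these homotopy colimits (it is a left adjoint in the relevant change-of-universe setup, cf.\ \cite{Le95}), so $i_* S^n = \Sigma^{(n,0)} i_*\S$ in the bigrading convention of Definition \ref{de:salpha}. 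Smashing with $\hmf$ and taking homotopy groups then commutes with these colimits as well, because the spheres $S^{(n,m)}$ are compact in $\shg$.

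The key step is then to show the isomorphism is preserved under the building operations. For a single cell attachment $S^{n-1} \xrightarrow{f} X' \to X$ one gets a cofiber sequence $i_*S^{n-1} \to i_*X' \to i_*X$, hence a long exact sequence in $\hmf_*$. By the inductive hypothesis, $\hmf_*(i_*S^{n-1}) \cong \hmf_* \otimes_\F H\F_*(S^{n-1})$ and $\hmf_*(i_*X') \cong \hmf_* \otimes_\F H\F_*(X')$, and crucially the connecting map is $\hmf_*$-linear and, by naturality of the change-of-universe/realization comparison (the analogue of the non-equivariant situation, the attaching data living in $\F$-degrees $\Z \subset RO(\G)$), it is identified with $\mathrm{id}_{\hmf_*} \otimes (f_*)$. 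Since $\hmf_*$ is flat over $\F$ (it is a free $\F$-module by Proposition \ref{pro:coeffring}), tensoring the long exact sequence computing $H\F_*(X)$ with $\hmf_*$ stays exact, and comparing with the long exact sequence for $i_*X$ via the five lemma yields $\hmf_*(i_*X) \cong \hmf_* \otimes_\F H\F_*(X)$, concentrated as claimed in $RO(\G)$-degrees coming from $\Z$. Passing to the (filtered) colimit over the skeleta, using compactness of the $S^{(n,m)}$ in $\shg$ and that filtered colimits are exact, extends this to arbitrary $X$, and the isomorphism is natural by construction.

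The main obstacle I anticipate is \emph{not} the homological algebra (which is routine given the flatness of $\hmf_*$ over $\F$), but rather pinning down precisely that the attaching maps and connecting homomorphisms are controlled entirely by the non-equivariant data $H\F_*(X)$ — i.e.\ that nothing in the purely equivariant direction (the $\sigma$-direction, the part detected by $\kappa$ and $\rho^{-1}$) can contribute. Concretely one must check that for maps between suspensions of $i_*\S$, the induced map on $\hmf_*$ is the $\hmf_*$-linear extension of the induced map on $H\F_*$; this follows because $[i_* S^a, \hmf \wedge i_* S^b]_{\G}$ in the relevant bidegrees is, by adjunction and the fact that $i_*$ has the expected behavior on function spectra for suspension spectra, just $\hmf_{a-b}$ tensored with the non-equivariant Hom, but making this identification careful is where the real content lies. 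An alternative, perhaps cleaner route is to invoke directly that $\hmf = i_*H\F$ after inverting nothing is false — rather $\Phi^{\G}$ and the Tate construction split $\hmf$ — so instead one argues via the Künneth isomorphism $\hmf_*(i_*X) \cong \hmf_*(i_*\S) \otimes_{\hmf_*} \hmf_*(i_*X)$ is vacuous, and falls back on the cellular induction above.
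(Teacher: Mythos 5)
Your argument is correct and is essentially the paper's proof, just unwound: the paper constructs the evident natural transformation $\hmf^*\otimes_{\F}H\F^*(X)\rightarrow \hmf^*(i_*X)$ and invokes the standard comparison of (co)homology theories (isomorphism on $S^0$ implies isomorphism everywhere), which is exactly the cellular induction, five-lemma, and filtered-colimit argument you spell out. Your worry about the connecting homomorphisms dissolves once the comparison map is constructed first as a natural transformation of homology theories on $\sh$, since naturality and stability then make the relevant squares commute automatically.
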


\begin{proof}
There is an evident morphism of $\F$-vector spaces $H\F^*(X) \rightarrow \hmf^*(i_*X)$. This extends to a natural  $\hmf^*$-module morphism $\hmf^* \otimes_{\F} H\F^*(X) \rightarrow \hmf^*(i_*X)$. Now, this morphism is a natural transformation of
non-equivariant cohomology theories which is an isomorphism for $X = S^0$.
\end{proof}

Remark that, by definition of $\hmf$, there is a weak equivalence $(\hmf)^\G \cong H\F$. Thus, by adjunction, there is a ring homomorphism
\begin{equation*}
\epsilon : i_*H\F \rightarrow \hmf.
\end{equation*}

This map induces a $\hmf^*$-module morphism in cohomology
\begin{equation*}
\tilde{\comp} : \hmf^* \otimes \A_* \rightarrow \Aq_*.
\end{equation*}

\begin{de} \label{de:phi}
Let $\comp : \A_* \rightarrow \Aq_*$ be the restriction of $\tilde{\comp}$.
\end{de}

We are now ready to define the twisted version of the extension of scalars functor defined in Lemma \ref{lemma:extensionofscalars} we need to understand $\lambdaq^*$ on non-equivariant spaces.

\begin{de}
Let $\hmf \ootimes (-) : \Acomod \rightarrow \Aqcomod$, defined, for any $\A_*$-module $M$, by $\hmf \ootimes M := \hmf^* \otimes_{\F} M$ as an $\hmf^*$-module, with the coaction map defined as follows
\begin{align*}
\lambdaq^* : \hmf^* \otimes_{\F} M \xrightarrow{\lambda^*} & \hmf^* \otimes_{\F} \A_* \otimes_{\F} M \\
\xrightarrow{\tilde{\comp}} & \Aq_* \otimes_{\F} M \\
\xrightarrow{\cong} & \Aq_* \otimes \hmf^* \otimes_{\F} M,
\end{align*}
where $\lambda^*$ is the non-equivariant coaction map.
\end{de}

This functor $\hmf^* \ootimes$ exactly express the effect if $i_*$ in cohomology, as we will see in the next Theorem. Note that this interpretation still lacks an explicit description of $\comp$ to be effective in computations. We defer this determination to subsection \ref{sub:formulacomp}.

\begin{thm} \label{thm:twistedext}
Let $X \in \sh$. There is an isomorphism of $\Aq_*$-comodules
\begin{equation*}
\hmf_*(i_*X)  \cong \hmf_* \ootimes H\F_*(X),
\end{equation*}
where $\ootimes$ is the twisted extension of scalars $\hmf \ootimes (-) : \Acomod \rightarrow \Aqcomod$.
\end{thm}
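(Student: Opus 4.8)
The plan is to compare both sides as natural transformations of homology theories on $\sh$ and reduce to the case $X = S^0$. First I would observe that Lemma~\ref{lemma:extensionofscalars} already supplies the underlying isomorphism of $\hmf_*$-modules $\hmf_*(i_*X) \cong \hmf_* \otimes_{\F} H\F_*(X)$, which is natural in $X$; so the only content of the theorem is that this isomorphism is compatible with the $\Aq_*$-coactions, i.e.\ that the $\Aq_*$-comodule structure on the left-hand side agrees with the twisted coaction $\lambdaq^*$ defining $\hmf \ootimes H\F_*(X)$. Both the source coaction (coming from $\hmf \wedge i_*X \to \hmf \wedge \hmf \wedge i_*X$) and the target coaction are natural transformations of functors $\sh \to \Aqcomod$ that become the module isomorphism of Lemma~\ref{lemma:extensionofscalars} after forgetting the comodule structure.

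Next I would carry out the reduction to a point. Since $i_*$ is a left adjoint (change-of-universe/pushforward) it preserves coproducts and cofibre sequences, and $\hmf_*(-)$ sends these to direct sums and long exact sequences; the twisted extension of scalars $\hmf \ootimes (-)$ is exact and additive on $\Acomod$ because it is $\hmf^* \otimes_{\F} (-)$ at the level of modules with a coaction that is natural. Hence both sides, as comodule-valued homology theories, are determined by their value on the sphere, together with the fact that the comparison map is a map of homology theories. So it suffices to check that for $X = S^0$ the map $\hmf_* \ootimes H\F_* = \hmf_* \ootimes \A_* \to \hmf_*(i_* S^0) = \Aq_*$ (dually, $\hmf^* \otimes \A_* \to \Aq_*$) is an isomorphism of comodules, and the comodule structure on the target is the standard one coming from the coproduct on $\Aq_*$.

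The key calculational input is the ring map $\epsilon : i_* H\F \to \hmf$ adjoint to the equivalence $(\hmf)^\G \cong H\F$, which induces $\tilde{\comp} : \hmf^* \otimes \A_* \to \Aq_*$, and I would use naturality of the coaction $\lambdaq^*$ from equation~\eqref{eq:natcoactioncoh} applied to $\epsilon$ itself: smashing $\epsilon$ with $\hmf$ on the left and comparing with the $\hmf$-module structure identifies the coaction on $\hmf_*(i_* S^0)$ transported along Lemma~\ref{lemma:extensionofscalars} with exactly the composite $\lambda^* $ followed by $\tilde\comp$ that defines $\lambdaq^*$ on $\hmf \ootimes \A_*$. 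Concretely, the diagram
\begin{equation*}
\xymatrix{
\hmf \wedge i_*H\F \ar[r]^{1 \wedge \eta \wedge 1} \ar[d]_{1 \wedge \epsilon} & \hmf \wedge \hmf \wedge i_*H\F \ar[d]^{1 \wedge 1 \wedge \epsilon} \\
\hmf \wedge \hmf \ar[r]^{1 \wedge \eta \wedge 1} & \hmf \wedge \hmf \wedge \hmf
}
\end{equation*}
commutes, and taking homotopy groups turns the top row into the coaction defining $\hmf \ootimes$ while the bottom row is the $\Aq_*$-coaction on $\Aq_*$; this is the naturality square that pins everything down.

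I expect the main obstacle to be purely bookkeeping rather than conceptual: carefully matching the several tensor-product conventions (left versus right $\hmf^*$-module structures, the $RO(\G)$-grading versus the $\Z$-grading on $H\F_*(X)$, and the swap isomorphism used in the last line of the definition of $\lambdaq^*$) so that the comparison map really is a map of $\Aq_*$-comodules and not merely of $\hmf^*$-modules, and verifying that the homology theory / cofibre-sequence argument genuinely lifts to the category $\Aqcomod$ (which requires knowing that $\hmf^*(i_* X)$ is always free, hence that there are no $\lim^1$ or flatness subtleties). Given Lemma~\ref{lemma:extensionofscalars} and the exactness of $\hmf \ootimes (-)$, none of these is hard, but they are the places where a proof could go wrong.
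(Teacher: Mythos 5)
There is a genuine gap, and it sits exactly at your reduction step. Knowing the statement for one value of $X$ does not let you deduce it for all $X$: what has to be proved is that a specific natural transformation of $\hmf_*$-module-valued theories (the one from Lemma \ref{lemma:extensionofscalars}) is compatible with the two coactions, and such a compatibility does not propagate through cofibre sequences. A degree-zero natural transformation of homology theories that vanishes on spheres need not vanish in general (the "defect" between the two coactions is exactly such a transformation; compare homology operations like the Bockstein, which vanish on spheres), and the five lemma only yields isomorphism statements, not the vanishing of a difference of maps. Moreover your base case is misstated: $H\F_*(S^0)=\F$ and $\hmf_*(i_*S^0)=\hmf_*$, not $\A_*$ and $\Aq_*$; what you are actually checking with your commuting square is the case $X=H\F$, i.e.\ that $\tilde{\comp}\colon \hmf_*\otimes_{\F}\A_* \to \Aq_*$ is a map of comodules. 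That is a true and useful fact, but your square contains no $X$ at all, so it cannot "pin down" the coaction on $\hmf_*(i_*X)$ for a general spectrum $X$, which is the content of the theorem.

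The paper avoids any reduction by proving the statement for all $X$ simultaneously with a single spectrum-level diagram: it compares the unit insertion on $i_*(H\F\wedge X)$ with the unit insertion on $\hmf\wedge i_*X$ through $\epsilon\colon i_*H\F\to\hmf$, but formulated with relative smash products, $\hmf\wedge_{i_*H\F} i_*(H\F\wedge X)\simeq \hmf\wedge i_*X$ and $(\hmf\wedge\hmf)\wedge_{\hmf}(\hmf\wedge i_*X)$, so that the horizontal comparison maps are equivalences; passing to homotopy and using Lemma \ref{lemma:extensionofscalars} together with flatness of $\Aq_*$ over $\hmf_*$ then identifies the equivariant coaction on $\hmf_*(i_*X)$ with $(\mathrm{id}\otimes\comp)\circ\lambda^*$, which is the twisted coaction defining $\hmf\ootimes H\F_*(X)$. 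Your argument can be repaired along the same lines: smash your square with $i_*X$ on the right (or, equivalently, replace $i_*H\F$ by $i_*(H\F\wedge X)$ throughout), then base-change over $i_*H\F$ and $\hmf$ so that the horizontal maps become isomorphisms in homotopy; the sphere reduction should simply be dropped.
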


\begin{proof}
Recall that the coaction maps $\lambdaq^*$ (respectively its non-equivariant analogue $\lambda^*$) on the homology of $X$ is defined by inserting the unit of $\hmf$ (respectively $H\F$) appropriately in $\hmf \wedge X$ (respectively $H\F \wedge X$).

The result follows from the commutativity of the following diagram:

$$ \xymatrix{ \hmf \wedge_{i_*H\F} i_*(H\F \wedge X) \ar[r]^{\epsilon \wedge i_*X} \ar[d]^{ \hmf \wedge_{i_*H\F}i_*(H\F \wedge \eta \wedge X)}  &   \hmf \wedge i_*X  \ar[d]_{\hmf \wedge \eta \wedge i_*X} \\
 \hmf \wedge_{i_*H\F}i_*(H\F \wedge H\F \wedge X) \ar[r]^{\epsilon \wedge \epsilon \wedge i_*X} \ar[d]^{\cong \quad\quad} & \hmf \wedge \hmf \wedge i_*X  \ar[d]_{\cong \quad\quad} \\
 i_*\left( (H\F \wedge H\F) \wedge_{H\F} (H\F \wedge X) \right)  \ar[r]^{\epsilon \wedge \epsilon \wedge i_*X} & (\hmf \wedge \hmf) \wedge_{\hmf} (\hmf \wedge i_*X) , } $$

where the horizontal maps are isomorphisms.
\end{proof}

\section{The formula for $\comp$} \label{sub:formulacomp}

In this subsection, we give a formula for the morphism $\comp$. Note that, together with Theorem \ref{thm:twistedext}, this will give a closed formula for the coaction of $\Aq_*$ on the $\G$-equivariant cohomology of non-equivariant spaces.
The statement is given in Theorem \ref{thm:comp}.

The first step in this analysis is to find a space whose cohomology has a coaction of $\A_*$ that makes appear every generator $\xi_i \in \A_*$, and compare it to an equivariant space whose equivariant coaction is well known.

A non-equivariant space satisfying our needs is obviously the infinite projective space (see \cite{Mi58}).
\begin{pro}[{\cite{Mi58}}] \label{pro:hfbz2}
There is an isomorphism of algebras
\begin{equation*}
H\F^*(B\Z/2) \cong \F[x],
\end{equation*}
where $x$ is in degree $1$. Moreover, the algebra map $\lambda^*$ is entirely determined by
\begin{equation*}
\lambda^*(x) = \sum_{i \geq 0} x^{2^i} \otimes \pxi_i.
\end{equation*}
\end{pro}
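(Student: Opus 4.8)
The plan is to reduce everything to the coaction on the single degree‑one class $x$, and then pin that coaction down using the various (co)algebra structures in play. First I would recall the classical computation $H\F^*(B\Z/2)\cong \F[x]$ with $|x|=1$: $B\Z/2$ is the infinite real projective space, a $K(\Z/2,1)$, and this is a standard cellular cohomology computation. Since $H\F^*(B\Z/2)$ is the cohomology of a space, the Cartan formula says precisely that the coaction $\lambda^*\colon H\F^*(B\Z/2)\to H\F^*(B\Z/2)\otimes\A_*$ is a homomorphism of graded $\F$-algebras, the target carrying the tensor‑product algebra structure built from the commutative multiplication of $\A_*$. Hence $\lambda^*$ is determined by $\lambda^*(x)$, and for degree reasons we may write $\lambda^*(x)=\sum_{n\geq 1}x^{n}\otimes\phi_n$ with $\phi_n\in(\A_*)_{n-1}$; counitality of the coaction forces $\phi_1=1$.

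Next I would determine which $\phi_n$ can be nonzero. The addition homomorphism $\Z/2\times\Z/2\to\Z/2$ induces $\mu\colon B\Z/2\times B\Z/2\to B\Z/2$ with $\mu^*(x)=x\otimes 1+1\otimes x$, so $x$ is primitive for the induced coproduct on $\F[x]$. Applying naturality of $\lambda^*$ along $\mu$ together with its multiplicativity, one finds that for each $n$ with $\phi_n\neq 0$ the identity $(x\otimes 1+1\otimes x)^{n}=x^{n}\otimes 1+1\otimes x^{n}$ must hold in $H\F^*(B\Z/2\times B\Z/2)$; by the mod $2$ binomial theorem this forces $n$ to be a power of $2$. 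Thus $\lambda^*(x)=\sum_{i\geq 0}x^{2^i}\otimes\phi_{2^i}$. To start the identification, $\phi_2=\pxi_1$: indeed $Sq^1(x)=x^2$, so $\langle Sq^1,\phi_2\rangle=1$, and $(\A_*)_1$ is one‑dimensional, spanned by $\pxi_1$ with $\langle Sq^1,\pxi_1\rangle=1$.

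Finally I would induct on $i$ using coassociativity of $\lambda^*$. Expanding $(\lambda^*\otimes\mathrm{id})\lambda^*(x)=(\mathrm{id}\otimes\Delta)\lambda^*(x)$ and using multiplicativity — so that $\lambda^*(x^{2^i})=\lambda^*(x)^{2^i}=\sum_j x^{2^{i+j}}\otimes\phi_{2^j}^{2^i}$ in characteristic $2$ — one reads off
\begin{equation*}
\Delta(\phi_{2^{n}})=\sum_{a+b=n}\phi_{2^{a}}^{2^{b}}\otimes\phi_{2^{b}},
\end{equation*}
which is exactly the coproduct of $\pxi_n$ in $\A_*$. Assuming inductively $\phi_{2^a}=\pxi_a$ for $a<n$, this says $\phi_{2^n}-\pxi_n$ is primitive in $(\A_*)_{2^n-1}$. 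Since the indecomposables of the Steenrod algebra sit in degrees $2^j$, so do the primitives of $\A_*$; as $2^n-1$ is not of that form for $n\geq 2$, we get $\phi_{2^n}=\pxi_n$ (the case $n=1$ having been settled above), giving $\lambda^*(x)=\sum_{i\geq 0}x^{2^i}\otimes\pxi_i$ with the convention $\pxi_0=1$.

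The only step requiring genuine care is this last identification: a priori $\phi_{2^n}$ could differ from $\pxi_n$ by a decomposable element of $(\A_*)_{2^n-1}$, and the point is to rule that out. The primitivity/degree argument above does so cleanly, the single delicate degree (namely $1$) being handled by the nonvanishing $Sq^1(x)=x^2$. A more hands‑on alternative, closer to Milnor's original treatment, would be to evaluate the Milnor basis element $Sq(0,\dots,0,1)$ — equivalently the Milnor primitive $Q_{n-1}$ — directly on $x$ by the Cartan formula to obtain $Q_{n-1}(x)=x^{2^n}$; this yields the same conclusion but amounts to the same bookkeeping. Everything else is immediate from the Cartan formula, the structure of $H\F^*$ of real projective space, and the description of $\A_*$ recalled in Subsection~\ref{sub:steenrod}.
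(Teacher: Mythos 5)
Your argument is correct and complete. Note that the paper does not actually prove Proposition \ref{pro:hfbz2}: it is recalled from Milnor \cite{Mi58} with no proof given, so there is no internal argument to compare against, and what you have written is in substance a reconstruction of the classical one. Each of your reductions is sound: multiplicativity of $\lambda^*$ (Cartan formula) reduces the claim to $\lambda^*(x)$; naturality along the multiplication $B\Z/2\times B\Z/2\to B\Z/2$ together with the mod $2$ binomial theorem confines the sum to the powers $x^{2^i}$; the pairing $\langle Sq^1,\pxi_1\rangle=1$ settles the one delicate degree; and coassociativity plus the fact that the primitives of $\A_*$ are dual to the indecomposables of $\A$, hence concentrated in degrees $2^j$, forces $\phi_{2^n}=\pxi_n$ for $n\geq 2$ since $2^n-1$ is not a power of $2$. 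Two small points are worth recording. First, since the sum is infinite, the coaction on $H\F^*(B\Z/2)$ is valued in a completed tensor product; this is harmless here because everything is of finite type in each degree. Second, the coproduct you extract, $\Delta(\phi_{2^n})=\sum_{a+b=n}\phi_{2^a}^{2^b}\otimes\phi_{2^b}$, is the standard Milnor diagonal, which is what the paper intends (its displayed formula for $\Delta(\pxi_i)$ in Subsection \ref{sub:steenrod} has an indexing slip: $\pxi_k^{2^{i-k}}$ should read $\pxi_{i-k}^{2^k}$). Finally, in Milnor's own treatment the $\xi_i$ are essentially \emph{defined} by this coaction, so the content of the proposition depends on which description of $\A_*$ one takes as primitive; given the paper's abstract presentation of $\A_*$ as $\F[\pxi_i]$ with the Milnor diagonal, your uniqueness argument (agreement with $\pxi_n$ up to a primitive, which vanishes in the relevant degrees) is exactly the bridge that is needed.
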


In particular, Lemma \ref{lemma:extensionofscalars} gives that $\hmf^*(i_*B\Z/2) \cong \hmf^*[x]$. We will now compare $B\Z/2$ to its better-behaved $\G$-equivariant analogue $B'\Z/2$.

\begin{lemma}
The algebra morphism of $\hmf^*$-modules 
\begin{equation*}
\iota^* : \frac{\hmf^*[c,b]}{(c^2 + \rho c + \tau b)} \rightarrow \hmf^*[x]
\end{equation*}
induced by $\iota$  (provided by equation \eqref{eq:iota}) is determined by 
\begin{align*}
\iota^*(c) = \tau x \\
\iota^*(b) = \tau x^2 + \rho x.
\end{align*}
\end{lemma}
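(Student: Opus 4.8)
The plan is to exploit that $\iota^*$ is a map of $\hmf^*$-algebras between $\hmf^*(B'\Z/2) \cong \hmf^*[c,b]/(c^2 + \rho c + \tau b)$ and $\hmf^*(i_*B\Z/2) \cong \hmf^*[x]$, the latter identification coming from Lemma \ref{lemma:extensionofscalars}, which exhibits $\hmf^*[x]$ as \emph{free} over $\hmf^*$ on the monomials $x^n$. Consequently $\iota^*$ is determined by the two classes $\iota^*(c) \in \hmf^{1,1}(i_*B\Z/2)$ and $\iota^*(b) \in \hmf^{2,1}(i_*B\Z/2)$, subject to the single constraint $\iota^*(c)^2 + \rho\iota^*(c) + \tau\iota^*(b) = 0$.

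First I would pin down $\iota^*(c)$. Both $c$ and $b$ lie in the reduced cohomology of the based space $B'\Z/2$, so their images lie in $x\,\hmf^*[x]$. A bidegree count using the cohomology of a point (Proposition \ref{pro:coeffring}), in particular that $\hmf^{0,1} = \F\tau$ and that nothing of the relevant negative bidegree survives, forces $\iota^*(c) = \alpha\tau x$ for some $\alpha \in \F$. To see $\alpha = 1$ I would apply the forgetful functor to underlying non-equivariant cohomology, under which $\tau \mapsto 1$ and $\rho \mapsto 0$: the presentation degenerates to $H\F^*(B'\Z/2)^{\mathrm{und}} \cong \F[c]$, and since $\iota$ is a non-equivariant weak equivalence this generator maps to the polynomial generator $x$ of $\F[x] = H\F^*(B\Z/2)$; hence $\alpha = 1$ and $\iota^*(c) = \tau x$.

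Next I would pin down $\iota^*(b)$. Substituting $\iota^*(c) = \tau x$ into the relation gives $\tau\bigl(\tau x^2 + \rho x + \iota^*(b)\bigr) = 0$ in $\hmf^*[x]$. A bidegree count as before shows $\iota^*(b) = \lambda\tau x^2 + \mu\rho x$ with $\lambda,\mu \in \F$; plugging this in, the relation reads $(1+\lambda)\tau^2 x^2 + (1+\mu)\rho\tau x = 0$, and since $\hmf^*[x]$ is free over $\hmf^*$ on the $x^n$ while $\tau^2$ and $\rho\tau$ are nonzero in $\hmf^*$ (they lie in the polynomial summand $\F[\rho,\tau]$), we conclude $1+\lambda = 1+\mu = 0$, i.e. $\lambda = \mu = 1$, so $\iota^*(b) = \tau x^2 + \rho x$.

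The main obstacle is that $\tau$ is genuinely a zero-divisor on $\hmf^*$ because of the negative-cone summand $\nor\,\F[\rho^{-1},\tau^{-1}]$, so one cannot simply cancel $\tau$ from the relation; instead the argument must run the degree bookkeeping of Proposition \ref{pro:coeffring} to control exactly which monomials of $\hmf^*$ can occur as coefficients of $x$ and $x^2$ in bidegrees $(1,1)$ and $(2,1)$. This — together with checking that the forgetful functor really sends $c$ to the polynomial generator, which is exactly the statement that $\iota$ induces an isomorphism on underlying $H\F^*$ — is the only delicate point; the rest is formal manipulation of the $\hmf^*$-algebra map.
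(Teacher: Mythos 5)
Your proof is correct and follows essentially the same route as the paper: use the fact that $\iota$ is a non-equivariant weak equivalence to pin down $\iota^*(c)$, then extract $\iota^*(b)$ from the relation $c^2+\rho c+\tau b=0$. The only differences work in your favor: where the paper removes the possible $\rho$-ambiguity in $\iota^*(c)$ by the normalization $c\mapsto c+\rho$ and then cancels $\tau$ without comment (even though $\tau$ is a zero divisor in $\hmf^*$ because of the negative cone), your bidegree bookkeeping in weight $1$ --- which shows the negative-cone summand cannot contribute, so the coefficients of $x$ and $x^2$ are forced and no $\tau$-torsion interferes --- makes both steps explicit.
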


\begin{proof}
The isomorphism 
\begin{equation*}
\hmf^*(B'\Z/2) \cong \frac{\hmf^*[c,b]}{(c^2 + \rho c + \tau b)}
\end{equation*}
is given in equation \eqref{eq:bprimecohomology}, and 
\begin{equation*}
\hmf^*(B\Z/2) \cong \hmf^*[x]
\end{equation*}
is a consequence of Proposition \ref{pro:hfbz2}  and Lemma \ref{lemma:extensionofscalars}.

The fact that $\iota$ is a non-equivariant equivalence gives that, modulo $\rho$, $\iota^*(c) = \tau x$. Without loss of generality, we can assume that $\iota^*(c) = \tau x$ (if not, change $c$ to $c + \rho$).

Now, since $\iota^*$ is a map of algebra and of $\hmf^*$-modules, $\tau^2 x^2 = \iota^*(c^2) = \rho \tau x + \tau \iota^*(b)$. The result follows.
\end{proof}

\begin{thm} \label{thm:comp}
The morphism $\comp : \A_* \rightarrow \Aq_*$ is an $\F$-algebra map. Moreover, it is determined by the formula
\begin{equation*}
\comp(\pxi_i) = \rho^{2^{n-1}-1} \xi_n + \sum_{i=1}^{n} \rho^{2^n - 2^i} \eta_R(\tau)^{2^{i-1}-1} \xi_{n-i}^{2^{i-1}} \tau_{n-1}.
\end{equation*}
\end{thm}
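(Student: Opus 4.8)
The plan is to identify $\comp$ by naturality applied to the map $\iota : i_*B\Z/2 \to B'\Z/2$ of \eqref{eq:iota}, comparing the non-equivariant coaction $\lambda^*$ on $H\F^*(B\Z/2) = \F[x]$ with the equivariant coaction $\lambdaq^*$ on $\hmf^*(B'\Z/2)$. The point is that $\hmf^*(i_*B\Z/2) \cong \hmf^*[x]$ by Lemma~\ref{lemma:extensionofscalars}, and by Theorem~\ref{thm:twistedext} the coaction on this module is exactly the twisted extension of scalars, which is built from $\comp$. So chasing a single generator $x$ around the naturality square for $\iota$ will pin down $\comp(\pxi_i)$ for all $i$ at once.

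First I would establish that $\comp$ is an $\F$-algebra map: since $B'\Z/2$ is a suspension spectrum, the Proposition in the ``Cartan formula'' appendix says $\lambdaq^*$ is an algebra map on $\hmf^*(B'\Z/2)$; the non-equivariant $\lambda^*$ on $\F[x]$ is likewise an algebra map; and $\comp = \tilde\comp|_{\A_*}$ is extracted from these multiplicative structures via the ring map $\epsilon : i_*H\F \to \hmf$, so it respects products. Next I would run the naturality of $\lambdaq^*$ (equation~\eqref{eq:natcoactioncoh}) for the $\G$-map $\iota$: this gives a commuting square relating $\lambdaq^*$ on $\hmf^*(B'\Z/2) = \hmf^*[c,b]/(c^2+\rho c + \tau b)$ to $\lambdaq^*$ on $\hmf^*(i_*B\Z/2) = \hmf^*[x]$, with the horizontal maps given by $\iota^*(c) = \tau x$, $\iota^*(b) = \tau x^2 + \rho x$ from the preceding Lemma. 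On the $B'\Z/2$ side, the Example in the Cartan-formula appendix already computed
$$\lambdaq^*(b) = \sum_{i\ge 0} b^{2^i}\otimes \xi_i.$$
On the $\hmf^*[x]$ side, Theorem~\ref{thm:twistedext} tells us $\lambdaq^*$ is obtained from $\lambda^*(x) = \sum_{i\ge0} x^{2^i}\otimes\pxi_i$ by post-composing with $\tilde\comp$, hence $\lambdaq^*$ on the class $\iota^*(b) = \tau x^2 + \rho x$ must equal $\sum_i (\iota^* b)^{2^i}\otimes \comp(\pxi_i)$ after applying $\tilde\comp$ to $\tau x^2+\rho x$ coefficient-wise — here one uses $\tilde\comp(\tau) = \eta_R(\tau)$ and $\tilde\comp(\rho) = \rho$ (the $\rho$ being primitive, $\tau$ realizing as $\eta_R(\tau) = \rho\tau_0+\tau$). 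Equating the two expressions via $\iota^*$ and solving recursively for $\comp(\pxi_i)$ in terms of the $\xi_j,\tau_j$ gives the stated formula; the explicit bookkeeping is the same $2$-adic shuffle that appears in Milnor's original computation, now carried out over $\hmf_*$ with $\eta_R(\tau)$ in place of $\tau$ and a tower of powers of $\rho$ recording the failure of the equivariant Thom class to be a square.

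\textbf{The main obstacle} I expect is not conceptual but organizational: correctly tracking the powers of $\rho$ and the twisted coefficients $\eta_R(\tau)^{2^{i-1}-1}$ through the recursion. The relation $\iota^*(b) = \tau x^2 + \rho x$ mixes the ``geometric'' direction (the $\tau x^2$ term, which sees the $\xi_i$'s as $2$-nd powers) with an error term $\rho x$ that feeds lower $\xi_j$'s back in, and one has to verify that the closed form
$$\comp(\pxi_i) = \rho^{2^{n-1}-1}\xi_n + \sum_{i=1}^{n}\rho^{2^n-2^i}\eta_R(\tau)^{2^{i-1}-1}\xi_{n-i}^{2^{i-1}}\tau_{n-1}$$
(with $n = i$) is the unique solution to the resulting system, using that $\hmf^*[x]$ is $\rho$-torsion-free in the relevant range so the recursion has a well-defined answer. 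A secondary check is that the formula is consistent with $\comp(\pxi_1)$ forcing $\comp(\pz_1) = \theta_0 \mapsto x$ in the conjugated picture used in Lemma~\ref{lemma:definephi}, which serves as a sanity test on the normalization $\iota^*(c) = \tau x$.
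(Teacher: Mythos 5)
Your proposal follows essentially the same route as the paper's proof of Theorem \ref{thm:comp}: the identification $\hmf^*(i_*B\Z/2)\cong\hmf^*[x]$ from Lemma \ref{lemma:extensionofscalars}, the twisted-coefficient description of its coaction from Theorem \ref{thm:twistedext} (so that $\lambdaq^*(x)=\sum_i x^{2^i}\otimes\comp(\pxi_i)$, with $\tau\mapsto\eta_R(\tau)$, $\rho\mapsto\rho$ on coefficients), and the naturality square for $\iota$ with $\iota^*(c)=\tau x$, $\iota^*(b)=\tau x^2+\rho x$. The one substantive difference is which class you chase. The paper chases $c$, whose coaction $\lambdaq^*(c)=c\otimes 1+\sum_i b^{2^i}\otimes\tau_i$ yields the non-recursive relation $\eta_R(\tau)\comp(\pxi_i)=\tau^{2^{i-1}}\tau_{i-1}+\rho^{2^i}\tau_i$, and then checks that the stated closed form satisfies this identity by conjugating and repeatedly using $\theta_i^2=\rho\theta_{i+1}+\tau\zeta_{i+1}$ (Lemma \ref{lemma:technicalcomputation}). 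You chase $b$, which instead gives the recursion $\rho\,\comp(\pxi_n)=\tau^{2^{n-1}}\xi_{n-1}+\rho^{2^n}\xi_n+\eta_R(\tau)\comp(\pxi_{n-1})^2$; this is correct (it returns $\comp(\pxi_1)=\tau_0+\rho\xi_1$), but it forces a division by $\rho$, and your justification for that step points at the wrong object. What is needed is not that $\hmf^*[x]$ is $\rho$-torsion-free, but that $\Aq_*$ has no $\rho$-torsion in the bidegrees $(2^n-1,0)$ where $\comp(\pxi_n)$ lives; this is true because $\Aq_*$ is $\hmf_*$-free on monomials of non-negative weight and the $\rho$-torsion of $\hmf_*$ sits in the negative cone $\nor\F[\rho^{-1},\tau^{-1}]$, whose weights are at least $2$, but it has to be said. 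Finally, the ``same $2$-adic shuffle as in Milnor'' that you defer is exactly the content of the closed formula: one still has to verify that the displayed expression solves your recursion (equivalently, the paper's relation), which is the computation the paper isolates in Lemma \ref{lemma:technicalcomputation}. With the torsion-freeness argument stated correctly and that verification actually carried out, your argument is complete and equivalent to the paper's; the paper's choice of $c$ simply buys a cleaner, non-recursive characterization of $\comp(\pxi_i)$.
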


\begin{proof}
The proof relies on the following diagram
\begin{equation} \label{eq:diagramcoactioniota}
\xymatrix@R=2cm@C=2cm{ \frac{\hmf^*[c,b]}{(c^2 + \rho c + \tau b)} \ar[r]^{\iota^*} \ar[d]^{\lambdaq^*} & \hmf^*[x] \ar[d]^{\lambdaq^* = (id \otimes \comp) \lambda^*} \\
\frac{\hmf^*[c,b]}{(c^2 + \rho c + \tau b)} \otimes \Aq_* \ar[r]^{\iota^* \otimes \Aq_*}  & \hmf^*[x] \otimes \Aq_*. }
\end{equation}
which commutes since $\iota^*$ is a map of $\Aq_*$-comodules, and $\lambdaq^* = (id \otimes \comp) \lambda^*$ (see Theorem \ref{thm:twistedext}).

Let's look at the image of $c \in \hmf^*(B'\Z/2)$.
On one hand,
\begin{align*}
(\iota^* \otimes id_{\Aq_*}) \lambdaq c & = \iota^*(c) \otimes 1 + \sum_{i \geq 0} \iota^*(b^{2^i}) \otimes \tau_i  \\
= & \tau x \otimes 1 + \sum_{i \geq 0} \left( \tau^{2^i} x^{2^{i+1}} + \rho^{2^i} x^{2^i} \right) \otimes \tau_i.
\end{align*}
On the other hand,
\begin{align*}
\lambdaq^* \iota^* c = & \lambdaq^*(\tau x)  \\
= & \eta_R(\tau) \lambdaq^*(x) \\
= & \eta_R(\tau) \sum_{i \geq 0} x^{2^i} \otimes \comp(\pxi_i).
\end{align*}
By Diagram \eqref{eq:diagramcoactioniota}, these two quantities have to be equal, giving
\begin{equation*}
\eta_R(\tau) \comp(\pxi_i) = \tau^{2^{i-1}} \tau_{i-1} + \rho^{2^i} \tau_i.
\end{equation*}

The end of the proof is now a computation, which we defer to Lemma \ref{lemma:technicalcomputation}.
\end{proof}

\begin{lemma} \label{lemma:technicalcomputation}
For all $n \geq 0$, the following formula holds in the $\G$-equivariant dual Steenrod algebra
\begin{equation*}
\tau^{2^{n-1}} \tau_{n-1} + \rho^{2^n} \tau_n = \eta_R(\tau) \left(  \rho^{2^{n-1}-1} \xi_n + \sum_{i=1}^{n} \rho^{2^n - 2^i} \eta_R(\tau)^{2^{i-1}-1} \xi_{n-i}^{2^{i-1}} \tau_{n-1} \right).
\end{equation*}
\end{lemma}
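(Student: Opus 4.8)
The plan is to prove this by a direct computation inside $\Aq_*$ (or, since only finitely many $\rho$'s are inverted in the formula, inside $\Aq_*[\rho^{-1}]$, where the coefficient ring $\F[\rho^{\pm 1},\tau]$ is an integral domain and $\eta_R(\tau)=\rho\tau_0+\tau$ is a non-zero-divisor). The only inputs needed are: the defining relation of Proposition \ref{pro:steenrod} in the form $\eta_R(\tau)\,\xi_{j+1}=\tau_j^2+\rho\,\tau_j$; the characteristic-$2$ Frobenius identity $(a+b)^{2^k}=a^{2^k}+b^{2^k}$; and the two ways of writing $\eta_R(\tau)$, namely $\eta_R(\tau)=\rho\tau_0+\tau$ (so that $\tau^{2^{k}}=\eta_R(\tau)^{2^k}+\rho^{2^k}\tau_0^{2^k}$) together with the normalization $\xi_0=1$. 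So this is a hands-on manipulation, most cleanly organized as an induction on $n$.

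The engine of the computation is the following reduction of a typical summand. After multiplying the bracketed right-hand side through by the external factor $\eta_R(\tau)$, each interior term acquires a factor $\eta_R(\tau)^{2^{i-1}}\xi_{j}^{2^{i-1}}=\bigl(\eta_R(\tau)\,\xi_j\bigr)^{2^{i-1}}$; applying the defining relation and then Frobenius turns this into $\bigl(\tau_{j-1}^2+\rho\,\tau_{j-1}\bigr)^{2^{i-1}}=\tau_{j-1}^{2^{i}}+\rho^{2^{i-1}}\tau_{j-1}^{2^{i-1}}$, an explicit $\F[\rho]$-combination of powers of a single $\tau_{j-1}$. The extreme summand (where the index forces $\xi_0=1$) instead produces a bare $\eta_R(\tau)^{2^{n-1}}$, which one expands as $\tau^{2^{n-1}}+\rho^{2^{n-1}}\tau_0^{2^{n-1}}$ — this is precisely where the term $\tau^{2^{n-1}}\tau_{n-1}$ of the left-hand side is born. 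The remaining $\xi_n$ (leading) term is handled by one further application of the defining relation with $j=n-1$. What is left is to check that the resulting $\F[\rho,\tau]$-linear combination of monomials $\tau_j^{2^k}$ on the right equals the left-hand side: the inductive hypothesis for $n-1$, raised to its square via Frobenius, accounts for the bulk of the interior terms, while the $\rho$-powers of the boundary contributions telescope, leaving exactly $\tau^{2^{n-1}}\tau_{n-1}+\rho^{2^n}\tau_n$. The base case $n=1$ is a short direct check using $\xi_0=1$.

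The main obstacle is not conceptual but combinatorial: one must track the $\rho$-exponents and the $\tau$-indices precisely through the Frobenius expansions and confirm that the telescoping cancellation is complete. A necessary preliminary is to fix the indexing in the displayed formula — as printed, the external parameter and the summation variable interact through overlapping conventions, and the exponent $2^{n-1}-1$ in the leading term $\rho^{2^{n-1}-1}\xi_n$ is not compatible with the internal bidegree $(2^n-1,0)$ of $\comp(\pxi_n)$ forced by Proposition \ref{pro:steenrod} and the relation $\eta_R(\tau)\comp(\pxi_n)=\tau^{2^{n-1}}\tau_{n-1}+\rho^{2^n}\tau_n$ extracted in the proof of Theorem \ref{thm:comp}; once these conventions are made consistent, the computation sketched above goes through.
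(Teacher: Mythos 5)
Your overall plan---prove the identity by direct manipulation in $\Aq_*$ using the quadratic relation and the Frobenius---is viable and is genuinely different from the paper's proof, which first applies the antipode $\chi$ to the whole equation (turning $\eta_R(\tau)$ into $\tau$ and the relation into $\theta_i^2=\rho\theta_{i+1}+\tau\zeta_{i+1}$) and then runs a short induction on the conjugated side. Your key device, rewriting $\eta_R(\tau)^{2^{i-1}}\xi_{n-i}^{2^{i-1}}$ as $(\eta_R(\tau)\xi_{n-i})^{2^{i-1}}$ and applying the defining relation inside the $2^{i-1}$-st power, is exactly the right move; and you are also right that the leading exponent must be $\rho^{2^n-1}$ rather than $\rho^{2^{n-1}-1}$ (this is forced by the bidegree and confirmed by the examples $\comp(\pxi_1)=\tau_0+\rho\xi_1$, $\comp(\pxi_2)=\rho^3\xi_2+\cdots$).

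There is, however, one substantive error that would derail the computation as you describe it: the relation you quote, $\eta_R(\tau)\xi_{j+1}=\tau_j^2+\rho\tau_j$, reproduces a typo in Proposition \ref{pro:steenrod}. The correct relation---forced by bidegrees, and the one the paper actually uses both in its own proof of this lemma (in conjugated form $\theta_i^2=\rho\theta_{i+1}+\tau\zeta_{i+1}$) and in the $B'\Z/2$ example---is
\begin{equation*}
\tau_j^2=\rho\,\tau_{j+1}+\eta_R(\tau)\,\xi_{j+1}.
\end{equation*}
With your version, $(\eta_R(\tau)\xi_{n-i})^{2^{i-1}}$ reduces to powers of the single element $\tau_{n-i-1}$, the leading term contributes $\rho^{2^n-1}\tau_{n-1}^2+\rho^{2^n}\tau_{n-1}$, and $\tau_n$ never appears on the right-hand side at all, so the identity cannot close. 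With the correct relation the argument not only works but is cleaner than you anticipate and needs no induction: writing $A_j=\rho^{2^n-2^j}\tau_{n-j-1}^{2^j}$, the $i$-th summand ($1\le i\le n-1$) contributes $(A_i+A_{i-1})\,\tau_{n-1}$, so the sum telescopes to $(A_{n-1}+A_0)\,\tau_{n-1}=\rho^{2^{n-1}}\tau_0^{2^{n-1}}\tau_{n-1}+\rho^{2^n-1}\tau_{n-1}^2$, and this cancels exactly against the residues $\rho^{2^n-1}\tau_{n-1}^2$ coming from $\eta_R(\tau)\rho^{2^n-1}\xi_n=\rho^{2^n-1}\tau_{n-1}^2+\rho^{2^n}\tau_n$ and $\rho^{2^{n-1}}\tau_0^{2^{n-1}}\tau_{n-1}$ coming from the $i=n$ term $\eta_R(\tau)^{2^{n-1}}\tau_{n-1}=\tau^{2^{n-1}}\tau_{n-1}+\rho^{2^{n-1}}\tau_0^{2^{n-1}}\tau_{n-1}$. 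Once you substitute the correct relation, your route yields a complete, and arguably more self-contained, proof than the paper's.
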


\begin{proof}
To show this, we conjugate the entire equation. Denoting $\zeta_i$ for the conjugate element of $\xi_i$ and $\theta_i$ for the conjugate element of $\tau_i$, we get
\begin{align*}
& \chi(\tau^{2^{n-1}} \tau_{n-1} + \rho^{2^n} \tau_n ) \\
= & \chi(\tau^{2^{n-1}}) \theta_{n-1} + \rho^{2^n} \theta_n \\
= & (\rho \theta_0 + \tau)^{2^{n-1}} \theta_{n-1} + \rho^{2^n} \theta_n \\
= & \tau \left(  \rho^{2^{n-1}-1} \zeta_n + \sum_{i=1}^{n} \rho^{2^n - 2^i} \tau^{2^{i-1}-1} \zeta_{n-i}^{2^{i-1}} \theta_{n-1}  \right),
\end{align*}
where the last equality comes from an immediate induction, applying repetitively the relation $\theta_i^2 = \rho \theta_{i+1} + \tau \zeta_{i+1}$ in the dual Steenrod algebra.
\end{proof}

\begin{ex}
The first few formulae read:
\begin{align}
\comp(\pxi_1) = & \tau_0 + \rho \xi_1 \\
\comp(\pxi_2) = & \rho^3 \xi_2 + \rho^2 \xi_1 \tau_1 + \rho \tau_0 \tau_1 + \tau \tau_1 \\
\comp(\pxi_3) = & \rho^7 \xi_3 + \rho^6 \xi_2 \tau_2 + \rho^5( \xi_1 \tau_1 \tau_2 + \xi_1^2 \tau_2 \tau_0) + \rho^4( \tau_2 \tau_1 \tau_0 + \tau \xi_1^2 \tau_2) \\ &+ \rho^3 \tau_2 \tau_1 + \rho^2 \tau^2 \xi_1 \tau_2 + \rho \tau^2 \tau_2 \tau_0 + \tau^3 \tau_2. \nonumber
\end{align}
\end{ex}

\end{appendices}

\bibliographystyle{alpha}
\bibliography{biblio}

\end{document}